\tikzstyle{connector} = [->,thick]
\newtheorem{theorem}{Theorem}
\newtheorem{lemma}[theorem]{Lemma}
\newtheorem{corollary}[theorem]{Corollary}
\newtheorem{question}{Question}
\theoremstyle{remark}
\begin{document}
\title{Graphs of large linear size are antimagic}

\author{Tom Eccles
\thanks{eccles.tom@gmail.com}
}
\date{\today}
\maketitle

\begin{abstract}
Given a graph $G=(V,E)$ and a colouring $f:E\mapsto \mathbb N$, the induced colour of a vertex $v$ is the sum of the colours at the edges incident with $v$. If all the induced colours of vertices of $G$ are distinct, the colouring is called antimagic. If $G$ has a bijective antimagic colouring $f:E\mapsto \{1,\dots,|E|\}$, the graph $G$ is called antimagic. A conjecture of Hartsfield and Ringel states that all connected graphs other than $K_2$ are antimagic. Alon, Kaplan, Lev, Roddity and Yuster proved this conjecture for graphs with minimum degree at least $c \log |V|$ for some constant $c$; we improve on this result, proving the conjecture for graphs with average degree at least some constant $d_0$.
\end{abstract}

\section{Introduction}\label{sec_intro}
All graphs in this paper are simple and undirected, except where we explicitly state otherwise. By a \emph{colouring} of a set $S$, we mean a function $f:S\mapsto \mathbb N$. For $s\in S$, $f(s)$ is called the \emph{colour} of $s$. We call $f$ a \emph{labelling} if it is injective, and in this case $f(s)$ is called the \emph{label} of $s$. For a graph $G$ and a colouring $f:E(G)\mapsto \mathbb N$, the \emph{induced colour} of a vertex $v$ is the sum of the colours of the edges incident with $v$. The colouring $f$ is called \emph{antimagic} if the induced colours at different vertices are distinct. If a graph $G$ admits a bijective antimagic labelling $f:E(G)\mapsto \{1,\dots,|E(G)|\}$, then we call $G$ \emph{antimagic}.

Hartsfield and Ringel \cite{HaRi} conjectured that all connected graphs on at least $3$ vertices are antimagic. This problem remains open, but there are numerous partial results. Hefetz \cite{Hef} proved that a graph on $3^k$ vertices which admits a $C_3$-factor is antimagic. This was generalised by Hefetz, Saluz and Tran \cite{HeSaTr}, who proved that a graph on $p^k$ vertices admitting a $C_p$-factor is antimagic. Cranston \cite{Cra} proved that any regular bipartite graph is antimagic. Perhaps the most significant result on antimagic graphs is that of Alon, Kaplan, Lev, Roddity and Yuster \cite{AKLRY}, who proved that there is an absolute constant $c_0$ such that if $G$ is a graph on $n$ vertices with minimum degree at least $c_0 \log n$ then $G$ is antimagic. For more information on this and related labelling problems, see the survey paper \cite{Gal}.

Our main theorem is an improvement on the result of \cite{AKLRY}. Note that if a graph $G$ has two isolated vertices, or any isolated edge, it cannot be antimagic. However, we shall show that if a graph $G$ has large average degree while avoiding these trivial obstacles, $G$ is antimagic.
\begin{theorem}\label{thm_main}
 There exists an absolute constant $d_0$ so that if $G$ is a graph with average degree at least $d_0$, and $G$ contains no isolated edge and at most one isolated vertex, $G$ is antimagic.
\end{theorem}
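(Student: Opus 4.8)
The plan is to separate the vertices by degree and to forbid the three resulting types of colour-collision by three different mechanisms: a probabilistic argument for pairs of high-degree vertices, an explicit greedy assignment for pairs of low-degree vertices, and a range-separation argument for mixed pairs.

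First I would clear away the trivial structure and record where the two exceptional hypotheses are used. The at-most-one isolated vertex has induced colour $0$, while every other vertex has positive induced colour, so it is automatically distinct and can be deleted and reinstated for free. Any two degree-$1$ vertices have induced colours equal to the labels of their unique incident edges, which under a bijective labelling agree only when the two leaves lie on a common edge, i.e.\ on an isolated $K_2$; this is excluded, so leaf--leaf collisions never occur. Writing $n=|V|$ and $m=|E|$, the hypothesis gives $m\ge (d_0/2)n$, linear size with a large constant, and this slack is the resource I will spend.

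Fix a constant threshold $D=D(d_0)$ and call a vertex \emph{heavy} if it has degree at least $D$ and \emph{light} otherwise. The intended arrangement places the largest labels so that heavy vertices land in a high colour band and light vertices in a low band, killing all mixed collisions by range separation. Heavy--heavy collisions I would handle probabilistically: colouring the relevant edges by a (suitably conditioned) uniformly random bijection, the colour of a heavy vertex is a sum of at least $D$ distinct random labels, which is strongly anticoncentrated, so each pairwise collision has probability of order $m^{-1/2}$ by a Littlewood--Offord/Erd\H{o}s-type bound; since a given collision depends only on the labels near the two vertices, a Lov\'asz Local Lemma valid for random bijections (in the style of Lu and Sz\'ekely) should yield all heavy colours distinct. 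Light--light collisions --- of which there may be $\Theta(n^2)$, so that the union bound is hopeless --- I would instead rule out deterministically: orient the edges so that every light vertex controls a private incident edge (leaves own their unique edge, and the excluded $K_2$'s are the only obstruction), then process the light vertices in order and use the label on each one's private edge, drawn from a large reservoir of still-unused small labels, to give it a colour distinct from all previously assigned light colours.

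The hard part is that these devices are coupled and that the range separation is not as clean as stated. A heavy vertex all of whose neighbours are light --- a large star centre --- has every incident edge shared with a light vertex, so forcing small labels there to protect the low band leaves its colour a sum of small labels, possibly inside the light range, while pushing large labels onto those edges corrupts the low band for its light neighbours. Resolving this needs a more refined classification than degree alone: for instance splitting off leaves, whose single-label colours are automatically mutually distinct and may therefore safely carry large labels, and measuring a vertex by the labels one can actually route to it rather than by its raw degree. This is where I expect the real work to lie. A secondary difficulty is the interaction between the probabilistic heavy stage and the greedy light stage through the light--heavy edges, whose labels are fixed by the light stage yet feed the heavy colours; I would run the light stage first, treat the light-incident contributions as fixed offsets, and check that anticoncentration and the Local Lemma survive for these shifted sums --- together with verifying that the Local Lemma and concentration estimates genuinely hold for a random bijection, where the dependencies introduced by bijectivity must be controlled.
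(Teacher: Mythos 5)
Your high-level division --- classify vertices by degree, prevent same-class collisions by an explicit collision-avoiding assignment and cross-class collisions by keeping the two classes' colours apart --- is genuinely close in spirit to the paper, which peels off the low-degree vertices as the complement of a $\delta$-core, fixes their sums by a greedy pass over a spanning structure (essentially your ``private edge'' device, implemented by repeatedly removing leaves of a forest), and separates classes by congruence modulo a fixed integer $k_1k_2$ rather than by magnitude. But as written the proposal has two gaps that are not technicalities. First, the star-centre problem you raise at the end is not a secondary wrinkle to be patched later; it is \emph{the} central difficulty of the theorem, and you leave it unresolved --- ``this is where I expect the real work to lie'' is a correct diagnosis, not a proof step. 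A graph of large constant average degree can consist almost entirely of large stars whose centres see only light vertices, so every label incident to such a heavy vertex is already constrained by the light stage and range separation is impossible. The paper's answer occupies its Sections 4 and 5: a bespoke structural decomposition into a star forest inside a spanning forest (Lemmas 13 and 14), the replacement of ``disjoint ranges'' by ``disjoint residue classes modulo $k_1k_2$,'' and a separate sorting argument (Lemma 19) that makes the star centres' sums pairwise distinct using whatever labels remain at the very end. Nothing in your outline substitutes for this.

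Second, the heavy--heavy stage is asserted rather than proved, and the numbers are delicate. With $D$ an absolute constant, the point-probability of the sum of $D$ random distinct labels from $[m]$ is at best of order $1/(m\sqrt{D})$, there can be $\Theta((m/D)^2)$ heavy pairs, so the union bound fails and everything rests on a Local Lemma for random bijections. The event ``the colour of $u$ equals the colour of $v$'' is a union of exponentially many canonical partial-injection events, so fitting it into the Lu--Sz\'ekely negative-dependency framework --- with the additional conditioning introduced by the already-fixed light-incident labels --- is itself a substantial piece of work that you have not carried out, and it is not clear the constants close with $D$ independent of $n$. The paper avoids all of this: its only probabilistic ingredient is a first-moment/Chebyshev argument for partitioning an edge set (Lemma 3), and collisions among high-degree vertices are prevented deterministically by controlling how many partial sums fall into each residue class modulo $k_1$ or $k_2$ and then finishing with a greedy pass over forest edges whose labels are all multiples of $k_1k_2$. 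So the proposal is a sensible plan that shares the paper's skeleton, but it is not a proof: the two steps it leaves open are exactly the hard ones.
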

The rest of the paper will be organised as follows. In Section \ref{sec_lemmas}, we prove some preliminary lemmas which will be needed during the proof of Theorem \ref{thm_main}. Sections \ref{sec_av_to_min}, \ref{sec_min_to_form} and \ref{sec_form_to_anti} give the proof of Theorem \ref{thm_main}. In Section \ref{sec_av_to_min}, we shall reduce the problem of finding an antimagic labelling for a graph with large average degree to a similar problem for a graph with minimum degree at least some constant. In Section \ref{sec_min_to_form}, we shall put a graph with large minimum degree in a special form, and in Section \ref{sec_form_to_anti} we shall label a graph in this form. In Section \ref{sec_further} we shall discuss possible directions for further work.

\section{Preliminary Lemmas}\label{sec_lemmas}
In this section we shall prove or recall various results which will be needed in the proof of Theorem \ref{thm_main}. The reader who is not overly concerned with the technical details of the proof may wish only to skim this section, referring back to it as necessary during the proof.

In Subsection \ref{subsec_graph_lemmas} we shall prove some simple results about graphs. In Subsection \ref{subsec_dominating} we recall the definition of a total dominating set, and quote a theorem about the size of the total $k$-domination number of a graph with large minimum degree. These two subsections contain lemmas which will be used in Section \ref{sec_min_to_form}, in which we take a graph with large minimum degree and partition the edges and vertices in a certain way. In Subsection \ref{subsec_k_colourings} we prove four technical lemmas about edge colourings of a graph modulo $k$ for some integer $k$; these lemmas will be needed in Section \ref{sec_form_to_anti}, when we shall label the edges of a graph in the form guaranteed by Section \ref{sec_min_to_form}.

\subsection{Graph Lemmas}\label{subsec_graph_lemmas}
In this subsection, we prove two basic results on graphs. Lemma \ref{lem_all_colours} is a result about colouring a graph so that every colour appears at every vertex, and Lemma \ref{lem_random_partition} concerns finding a bipartition of a graph with many edges, so that each part has many edges. Corallary \ref{Cor_good_partition} is simply a special case of Lemma \ref{lem_random_partition} --- this is the form we shall find useful later.

We start with a well known lemma about equitable bipartitions of graphs. An edge-colouring of a graph $G$ is called equitable if for every vertex $v$, the numbers of edges indicident at $v$ which receive each colour differ by at most $1$.

\begin{lemma}\label{lem_all_colours}
Let $G=(V,E)$ be a graph with minimum degree at least $2k+1$. Then $G$ has an edge-colouring $f:E\mapsto \{1,2\}$ such that every vertex is contained in at least $k$ edges of each colour.
\end{lemma}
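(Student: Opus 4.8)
The plan is to find an edge-colouring $f:E\mapsto\{1,2\}$ that is \emph{equitable} in the sense just defined, and then to check that equitability already forces the desired conclusion. The key arithmetic observation is that if every vertex $v$ has its incident edges split between the two colours with the two counts differing by at most $1$, then each colour class at $v$ has size at least $\lfloor \deg(v)/2\rfloor \geq \lfloor (2k+1)/2\rfloor = k$. So once an equitable $2$-colouring is in hand, the minimum-degree hypothesis $\delta(G)\geq 2k+1$ immediately gives at least $k$ edges of each colour at every vertex, which is exactly what we want.

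It therefore remains to produce an equitable $2$-edge-colouring. This is a classical fact, and the cleanest route I would take is to reduce it to an Eulerian-orientation argument. First I would reduce to the case where $G$ has no odd-degree vertices: add a new auxiliary vertex $w$ and join it to every vertex of odd degree in $G$. Since the number of odd-degree vertices is even, $w$ itself has even degree, so the augmented graph $G'$ has all degrees even. Each connected component of $G'$ is then Eulerian, so I can take an Eulerian circuit in each component and, traversing it, colour the edges alternately $1,2,1,2,\dots$. Because each Eulerian circuit enters and leaves every vertex in matched pairs of consecutive edges, this alternating colouring splits the edges at each vertex of $G'$ almost evenly: at a vertex of degree $2d$ in $G'$ the circuit passes through $d$ times, and the alternation gives exactly $d$ edges of each colour, or a discrepancy of at most $1$ depending on how the parity of the traversal falls at the start/end vertex of the circuit.

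Finally I would restrict the colouring back to $E(G)$ by simply deleting the auxiliary vertex $w$ and its incident edges. Deleting edges can only reduce the colour counts at a vertex, so I must check that equitability is preserved, or at least that the ``$\geq k$ of each colour'' conclusion survives. The safe way to argue this is to track each original vertex $v$ directly: in $G'$ the two colour classes at $v$ differ by at most $1$, and $v$ loses at most one edge when $w$ is removed (it was joined to $w$ only if $\deg_G(v)$ was odd). A short case-check on the parity of $\deg_G(v)$ confirms that after this deletion each colour still appears at least $\lfloor\deg_G(v)/2\rfloor\geq k$ times at $v$.

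I expect the main obstacle to be exactly this last bookkeeping step: ensuring that the alternating Eulerian colouring, once the auxiliary edges at $w$ are stripped away, still leaves at least $k$ edges of each colour at \emph{every} original vertex, including the odd-degree ones whose single auxiliary edge is removed. The parity casework is routine but must be done carefully, since a careless alternation could in principle leave an odd-degree vertex with $\lfloor\deg_G(v)/2\rfloor$ of one colour and strictly fewer of the other after deletion. Arranging the colour that is ``lost'' at each odd vertex to coincide with its majority colour in $G'$ is the detail that makes the bound come out as $k$ rather than $k-1$.
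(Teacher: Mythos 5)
Your proposal is correct and follows essentially the same route as the paper: Eulerize the graph by absorbing the odd-degree vertices (you via an auxiliary vertex $w$, the paper by pairing them up with extra edges), colour alternately along an Euler circuit, and restrict back to $G$. The delicate start-vertex issue you flag at the end is resolved in the paper by choosing the circuit to begin with an added edge; in your version the cleanest fix is simply to start the circuit at $w$, so that every original vertex is perfectly balanced in $G'$ before the auxiliary edges are deleted.
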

\begin{proof}
We may assume $G$ is connected; if not, we just consider each component separately. We pair up the vertices of $G$ of odd degree, and join each pair with an extra edge to form a multigraph $G'$. Since all the degrees of vertices in $G'$ even, $G'$ has an Eulerian circuit $C$ -- that is, a walk which begins and ends at the same vertex, and contains each edge exactly once. If any extra edge was added to $G$ to form $G'$, we choose $C$ so to start with with such an edge. Now, we colour the edges of $C$ alternately $1$ and $2$; each vertex is then contained in an equal number of edges of each colour, except the starting vertex of the walk, which may have $2$ more edges coloured $1$ than $2$. When restricted to $G$, this colouring is equitable unless every degree is even, in which case there may be exactly one vertex with exactly $2$ more edges of one colour than the other. Since $G$ has all degrees at least $2k+1$, in this colouring every vertex has at least $k$ incident edges of each colour.
\end{proof}

Next we shall prove a result about partitioning the vertices of a graph with many edges into two vertex classes, each having many edges --- this will be used in Section \ref{sec_min_to_form}. We define $m(n,r_1,r_2)$ to be the least $r$ such that every graph $G$ on $n$ vertices with $r$ edges has a vertex partition $V(G)=V_1\cup V_2$ with at least $r_1$ edges contained in $V_1$, and at least $r_2$ edges contained in $V_2$. If even $r = \binom{n}{2}$ does not suffice, for convenience we set $m(n,r_1,r_2)$ to be $\binom{n}{2}+1$. We bound $m(n,r_1,r_2)$ simply by considering the number of edges in each half of a random partition of $V$.

\begin{lemma}\label{lem_random_partition}
Let $n$, $r_1$ and $r_2$ be positive integers, and for $i=1$, $2$ let $p_i=\frac{\sqrt{r_i}}{\sqrt{r_1}+\sqrt{r_2}}$. Suppose that $r$ is an integer such that
\[
 rp_i^2 - \sqrt{rp_i^2+ 2rnp_i^3-r(2n+1)p_i^4}\ge r_i
\]
holds for $i=1,\,2$. Then $m(n,r_1,r_2)\le r$.
\end{lemma}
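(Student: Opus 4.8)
The plan is to use the probabilistic method via a random vertex partition, exactly as the sentence preceding the statement suggests. Given a graph $G$ on $n$ vertices with $r$ edges, I would place each vertex independently into $V_1$ with probability $p_1$ and into $V_2$ with probability $p_2=1-p_1$; since the given definitions yield $p_1+p_2=1$, this is a genuine partition. Writing $X_i$ for the number of edges with both endpoints inside $V_i$, an edge lies in $V_i$ with probability $p_i^2$, so $\mathbb{E}[X_i]=rp_i^2$, which is precisely the first term in the hypothesis. The goal is then to show that with positive probability $X_1\ge r_1$ and $X_2\ge r_2$ hold simultaneously, since any such outcome exhibits the required partition and hence witnesses $m(n,r_1,r_2)\le r$.

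The second step is a second-moment computation. Writing $X_i=\sum_e Y_e$, where $Y_e$ indicates that the edge $e$ lies inside $V_i$, the indicators $Y_e$ and $Y_f$ are independent unless $e$ and $f$ share a vertex, so only adjacent pairs of edges contribute to the covariance. A short calculation then gives $\mathrm{Var}(X_i)=rp_i^2(1-p_i^2)+p_i^3(1-p_i)\sum_v d_v(d_v-1)$, where $d_v$ denotes the degree of $v$. Bounding $\sum_v d_v(d_v-1)=\sum_v d_v^2-2r\le 2rn-2r<2rn$ (using $d_v\le n$ and $\sum_v d_v=2r$) yields $\mathrm{Var}(X_i)<V_i^{\ast}:=rp_i^2+2rnp_i^3-r(2n+1)p_i^4$, which is exactly the expression appearing under the square root. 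The \emph{strictness} of this inequality, coming from the genuine slack $\sum_v d_v(d_v-1)<2rn$, will matter.

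For the final step I would set $t_i=\mathbb{E}[X_i]-r_i=rp_i^2-r_i$, so that the hypothesis reads exactly $t_i\ge\sqrt{V_i^{\ast}}$, and in particular $t_i>0$. Applying the one-sided Chebyshev (Cantelli) inequality $\Pr[X_i\le \mathbb{E}[X_i]-t_i]\le \mathrm{Var}(X_i)/(\mathrm{Var}(X_i)+t_i^2)$ and using $\mathrm{Var}(X_i)<V_i^{\ast}\le t_i^2$ makes the right-hand side strictly less than $\tfrac12$, so $\Pr[X_i<r_i]<\tfrac12$ for each $i$. A union bound then gives $\Pr[X_1<r_1\text{ or }X_2<r_2]<1$, so with positive probability both $X_1\ge r_1$ and $X_2\ge r_2$, which is what is needed.

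The delicate point, and the place I expect to have to be careful, is that plain two-sided Chebyshev is too weak: it only controls $\Pr[|X_i-\mathbb{E}[X_i]|\ge t_i]$ by a quantity that can be arbitrarily close to $1$, and summing two such estimates is useless. The argument genuinely requires the \emph{one-sided} bound to push each lower-tail probability below $\tfrac12$, together with the strict inequality $\mathrm{Var}(X_i)<V_i^{\ast}$ to upgrade ``$\le\tfrac12$'' to ``$<\tfrac12$'', so that the union bound closes to give a probability strictly below $1$. The only other care needed is in the variance computation, namely isolating that the covariance arises solely from pairs of adjacent edges and matching the resulting formula term-by-term with the expression in the statement.
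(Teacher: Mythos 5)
Your proposal is correct and follows essentially the same route as the paper: a random bipartition with the same probabilities $p_i$, the identical second-moment computation over pairs of edges, and the same bound $\mathrm{Var}(X_i) < rp_i^2+2rnp_i^3-r(2n+1)p_i^4$. The only difference is the final concentration step --- the paper invokes the fact that the mean and median of $X_i$ differ by at most one standard deviation to get $m_i>r_i$, whereas you apply Cantelli's inequality directly; your version actually closes the union bound a little more cleanly, since Cantelli combined with the strict variance inequality gives $\Pr[X_i<r_i]<\tfrac12$ outright.
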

\begin{proof}
Let $G=(V,E)$ be a graph with $n$ vertices and $r$ edges --- our task is to find a partition of $V$ with at least $r_1$ edges in one part, and at least $r_2$ in the other. We take a random partition $V_1$, $V_2$ of $V$, with vertices placed independently with probability $p_i$ of being in $V_i$. Let $X_1$, $X_2$ be the random variables corresponding to the numbers of edges contained in $V_1$, $V_2$ respectively. For $i=1$, $2$ let $\mu_i$, $\sigma_i$ and $m_i$ be the mean, standard deviation and median of $X_i$ respectively. It is enough to show that for $i=1$, $2$ we have $m_i > r_i$; then with positive probability we have $X_i>r_i$ for $i=1$, $2$. Now, the mean of $X_i$ is $\mu_i = rp_i^2$, and the variance is given by
\begin{align*}
\sigma^2_i &= \sum_{e_1,\,e_2\in E(G)}\mathbb{P}(e_1\text{ and }e_2\in E_G(V_i))-\mathbb{P}(e_1\in E_G(V_i))\mathbb{P}(e_2\in E_G(V_i))\\
&= r(p_i^2-p_i^4)+\sum_{v\in V}d_G(v)(d_G(v)-1)(p_i^3-p_i^4)\\
&< r(p_i^2-p_i^4)+\frac{2r}{n}n(n-1)(p_i^3-p_i^4)\\
&< r(p_i^2-p_i^4)+2rn(p_i^3-p_i^4)\\
&= rp_i^2+ 2rnp_i^3-r(2n+1)p_i^4.
\end{align*}
Now, the mean and the median of a random variable differ by at most the standard deviation, and so
\begin{align*}
m_i &\ge \mu_i-\sigma_i\\
&> rp_i^2 - \sqrt{rp_i^2+ 2rnp_i^3-r(2n+1)p_i^4}\\
&\ge r_i.
\end{align*}
This proves the claim.
\end{proof}
We shall apply this in a specific case. If $r=an$, and $r_i=a_in$ for $i=1$, $2$, then $p_i = \frac{\sqrt{a_i}}{\sqrt{a_1}+\sqrt{a_2}}$, and to satisfy the condition of Lemma \ref{lem_random_partition} we need 
\begin{equation*}
a_in\le anp_i^2-\sqrt{anp_i^2+2an^2p_i^3-an(2n+1)p_i^4}.
\end{equation*}
Since $n>a$, it is enough that for $i=1$, $2$,
\begin{equation}\label{eq_rand_par}
a_i \le ap_i^2 - \sqrt{p_i^2+2ap_i^3-2ap_i^4}.
\end{equation}
This holds for large enough $a$, proving the following corollary --- this is the form of the result which we shall need in our proof of Theorem \ref{thm_main}.
\begin{corollary}\label{Cor_good_partition}
 Define a function $m':\mathbb R^+ \times \mathbb R^+ \mapsto \mathbb R^+$ by letting $m'(a_1,a_2)$ be the least real $a$ such that with $p_i = \frac{\sqrt{a_i}}{\sqrt{a_1}+\sqrt{a_2}}$ the equation \eqref{eq_rand_par} holds for $i\in \{1,2\}$. Then for all positive integers $n$,
\[
 m(n,a_1n,a_2n)\le m'(a_1,a_2)n.
\]
\end{corollary}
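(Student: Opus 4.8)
The plan is to read the statement as a repackaging of Lemma \ref{lem_random_partition} specialised to $r = an$ and $r_i = a_i n$, so that almost all of the work is already contained in the computation displayed immediately before the corollary; the two genuinely new points are to check that the quantity $m'(a_1,a_2)$ is well-defined, and to deal separately with small values of $n$, for which the hypothesis $n > a$ used in that computation fails.

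First I would establish that $m'(a_1,a_2)$ exists. Writing $p_i = \frac{\sqrt{a_i}}{\sqrt{a_1}+\sqrt{a_2}}$, which depends only on $a_1,a_2$ and not on $a$, the right-hand side of \eqref{eq_rand_par} is a continuous function of $a$ on $[0,\infty)$, since $p_i \in (0,1)$ makes the radicand $p_i^2 + 2ap_i^3 - 2ap_i^4 = p_i^2 + 2ap_i^3(1-p_i)$ positive throughout. Because $p_i^2 > 0$, this right-hand side equals $ap_i^2 - O(\sqrt{a})$ and hence tends to $+\infty$; consequently \eqref{eq_rand_par} holds for both $i$ once $a$ is large enough, so the set of admissible $a$ is non-empty. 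This set is closed, being the intersection of two preimages of $[0,\infty)$ under continuous maps, and is bounded below by $0$, so it has a least element, which is $m'(a_1,a_2)$. Evaluating the right-hand side of \eqref{eq_rand_par} at $a=0$ gives $-p_i < 0 < a_i$, so in fact $m'(a_1,a_2) > 0$ and the codomain $\mathbb{R}^+$ is respected.

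Next, set $a = m'(a_1,a_2)$ and suppose first that $n > a$. By the definition of $m'$, inequality \eqref{eq_rand_par} holds for $i = 1,2$, and the displayed chain of inequalities preceding the corollary shows that $r = an$ and $r_i = a_i n$ then satisfy the hypothesis of Lemma \ref{lem_random_partition}. To make the reduction explicit I would divide the Lemma's condition through by $n$ and square, reducing it to $\frac{a}{n}(1-p_i^2) \le 1$, which is immediate from $n > a$ and $p_i \in (0,1)$; this is the only place the comparison between $n$ and $a$ enters. Lemma \ref{lem_random_partition} then yields $m(n, a_1 n, a_2 n) \le an = m'(a_1,a_2)n$. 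A small amount of rounding is needed here, since $an$ and $a_i n$ need not be integers, but this is harmless because the inequalities in the proof of Lemma \ref{lem_random_partition} are strict ($m_i > r_i$), so passing to the nearest integers costs nothing.

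Finally, in the remaining regime $n \le a = m'(a_1,a_2)$ the reduction above is unavailable, but the bound is then trivial: by the convention in the definition of $m$ we always have $m(n, a_1 n, a_2 n) \le \binom{n}{2} + 1$, while $m'(a_1,a_2)n \ge n^2$, and $\binom{n}{2} + 1 = \frac{n(n-1)}{2} + 1 \le n^2$ for every $n \ge 1$. Combining the two cases gives the claimed inequality for all positive integers $n$. I expect the only delicate point to be the algebraic verification that $n > a$ suffices to pass from the hypothesis of Lemma \ref{lem_random_partition} to \eqref{eq_rand_par}; the case analysis on $n$ and the rounding are routine once that reduction is in hand.
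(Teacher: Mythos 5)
Your proposal is correct and follows essentially the same route as the paper, which likewise obtains the corollary by substituting $r=an$, $r_i=a_in$ into Lemma \ref{lem_random_partition} and using $n>a$ to reduce its hypothesis to \eqref{eq_rand_par}. Your additional checks --- that $m'(a_1,a_2)$ is well defined, and that the regime $n\le m'(a_1,a_2)$ is covered by the trivial bound $m(n,r_1,r_2)\le\binom{n}{2}+1\le n^2$ --- address points the paper leaves implicit, and they are handled correctly.
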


\subsection{Dominating sets}\label{subsec_dominating}

Next, we quote a bound on the total $k$-domination number of a graph with minimum degree at least $\delta$. For a graph $G=(V,E)$, the \emph{total $k$-domination number} of $G$, $\gamma_{k}^t(G)$, is the cardinality of the smallest vertex set $D\subseteq V$ such that $|N_G(v)\cap D|\ge k$ for each vertex $v\in V$. The following theorem was proved by Henning and Kazemi \cite{HK}: 
\begin{theorem}\label{thm_total_dom}
Suppose $G$ is a graph with minimum degree $\delta\ge k$, and $0\le p \le 1$. Then
\begin{equation*}
\gamma_k^t(G)\le n\left(p+\sum_{i=0}^{k-1}(k-i)\binom{\delta}{i}p^i(1-p)^{\delta-i}\right).
\end{equation*}
\end{theorem}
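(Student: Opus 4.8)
The plan is to use the probabilistic first-moment method in the style standard for domination bounds. First I would choose a random subset $A\subseteq V$ by including each vertex independently with probability $p$. This set typically fails to totally $k$-dominate $G$, since some vertices $v$ will have fewer than $k$ neighbours in $A$. I would then repair these deficiencies by hand and bound the expected cost of doing so.

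Concretely, for each vertex $v$ with $|N_G(v)\cap A|=j<k$, I would add to a correction set $B$ an arbitrary choice of $k-j$ vertices of $N_G(v)\setminus A$; this is possible since $d_G(v)\ge\delta\ge k$ guarantees at least $k-j$ such neighbours exist. Setting $D=A\cup B$, every vertex now has at least $k$ neighbours in $D$: a previously deficient $v$ retains its $j$ original neighbours in $A$ together with the $k-j$ added ones, and enlarging the set can only increase every other vertex's neighbour-count. Hence $D$ is a total $k$-dominating set, and $|D|\le|A|+\sum_{v}\max(0,\,k-|N_G(v)\cap A|)$.

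It remains to bound $\mathbb{E}[|D|]$. Clearly $\mathbb{E}[|A|]=np$. For a fixed vertex $v$, the quantity $|N_G(v)\cap A|$ is binomially distributed with parameters $d_G(v)$ and $p$, so
\[
\mathbb{E}\big[\max(0,\,k-|N_G(v)\cap A|)\big]=\sum_{i=0}^{k-1}(k-i)\binom{d_G(v)}{i}p^i(1-p)^{d_G(v)-i}.
\]
The key step is to replace $d_G(v)$ by $\delta$ in this expression. I would show that the map $d\mapsto\mathbb{E}[\max(0,\,k-\mathrm{Bin}(d,p))]$ is non-increasing in $d$: coupling $\mathrm{Bin}(d+1,p)$ as $\mathrm{Bin}(d,p)$ plus an independent Bernoulli$(p)$ variable gives a pointwise stochastic domination, and $y\mapsto\max(0,\,k-y)$ is non-increasing, so the expectation can only decrease as $d$ grows. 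Since $d_G(v)\ge\delta$ for every $v$, summing over the $n$ vertices yields
\[
\mathbb{E}[|D|]\le np+n\sum_{i=0}^{k-1}(k-i)\binom{\delta}{i}p^i(1-p)^{\delta-i},
\]
and by the first-moment principle some outcome attains at most this value, giving the claimed bound on $\gamma_k^t(G)$.

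The main obstacle is the monotonicity step. The rest is routine, but to obtain a clean bound depending only on $\delta$ — rather than one that still references each $d_G(v)$ — one must verify that the per-vertex contribution is genuinely maximised at the minimum degree. The stochastic-domination coupling is the cleanest route I see to establishing this, and without it one would be left with the weaker, degree-sensitive estimate.
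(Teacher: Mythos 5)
Your argument is correct and follows the same first-moment-plus-repair strategy as the paper's (sketched) proof: take a random subset, patch up the deficient vertices, and bound the expected total cost. The one place you diverge is the step you yourself flag as the main obstacle --- replacing $d_G(v)$ by $\delta$ in the per-vertex repair cost. Your stochastic-domination coupling (writing $\mathrm{Bin}(d+1,p)$ as $\mathrm{Bin}(d,p)$ plus an independent Bernoulli and using that $y\mapsto\max(0,k-y)$ is non-increasing) is a valid way to do this. The paper's sketch sidesteps the issue entirely with a small trick: for each vertex $v$ it first fixes a set $S_v$ of exactly $\delta$ neighbours of $v$, and measures deficiency only against $S_v$, i.e.\ it repairs $v$ whenever $|S_v\cap R|=i<k$. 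Then $|S_v\cap R|$ is exactly $\mathrm{Bin}(\delta,p)$-distributed regardless of $d_G(v)$, the expected repair cost is the stated sum on the nose, and no monotonicity lemma is needed; the resulting set is still totally $k$-dominating since $v$ keeps its $i$ neighbours of $S_v\cap R$ plus the $k-i$ added ones. Both routes give the theorem; the paper's device is slightly cleaner, while yours proves the (true and occasionally useful) extra fact that the per-vertex bound is monotone in the degree.
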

To sketch the proof of this theorem, for each vertex $v$ we first fix a set $S_v$ of $\delta$ neighbours of $v$. Then, we select a random subset $R$ of the vertices of $G$ by taking each with probability $p$. For each vertex $v$ which has $i < k$ members of $S_v$ in $R$, we add $k-i$ of its neighbours to $R$. The result is a $k$-dominating set, whose expected size is at most the bound in Theorem \ref{thm_total_dom}.

For positive integers $k$ and $\delta$, let $z(k,\delta)$ be the least real number $s$ so that if a graph $G=(V,E)$ has minimum degree at least $\delta$ we have $\gamma_{k}^t(G)\le s|V|$. For fixed $k$ and $\delta$ large, the best bound on $\gamma_k^t(G)$ is given when $p=\frac{\ln \delta}{\delta}(1+o(1))$, which gives a bound on $\gamma_k^t(G) \le \frac{n\ln\delta}{\delta}(1+o(1))$ -- and so $z(k,\delta) \le \frac{\ln\delta}{\delta}(1+o(1))$.

\subsection{Colouring graphs modulo $k$}\label{subsec_k_colourings}
In this subsection we prove four technical lemmas on colouring graphs modulo $k$ for some integer $k$. These lemmas will be important in our proof of Theorem \ref{thm_main}, where we shall often ensure that the induced sums at various vertices of a graph differ modulo $k$. Before we embark on the proofs of these lemmas, we introduce some terminology for colourings of graphs.

Given a graph $G=(V,E)$, an edge subset $E_1\subseteq E$ with a colouring $f:E_1\mapsto \mathbb N$, and a vertex colouring $g:V \mapsto \mathbb N$, we define the \emph{partial sum} of a vertex $v$ to be
\[
 s_{(G,f,g)}(v) = g(v) + \sum_{v\in e \in E_1}f(e).
\]
Now, for a vertex set $S\subseteq V$ and integers $k$ and $i$, we define
\[
 n_{(G,f,g,S,k)}(i)= |\{v\in S: s_{(G,f,g)}(v)\equiv i \pmod{k}\}|.
\]
In both these definitions, if the graph $G$ is clear from context it will be omitted.

The next lemma is a simple result which will allow us to colour a graph $G$ consisting of isolated edges so that the vertex sums $s_{(G,f,g)}(v)$ are not $0$ or $1$ modulo $k$, and don't take any other value modulo $k$ too often. This result will be used to prove Lemma \ref{lem_kcolour}, which is an equivalent lemma for a general graph.
\begin{lemma}\label{lem_iso_kcolour}
Let $k$ be an odd integer with $k\ge 5$. Suppose that $G=(V,E)$ is a graph consisting only of isolated edges. Then for any colouring $g: V \mapsto \mathbb N$, there exists a colouring $f: E\mapsto \{0,\dots,k-1\}$ such that
\begin{enumerate}
\item $n_{(G,f,g,V,k)}(0)=n_{(G,f,g,V,k)}(1)=0$,
\item for each $2\leq i \leq k-1$, $n_{(G,f,g,V,k)}(i)\leq |V|/(k-3) + k + 1$.
\end{enumerate}
\end{lemma}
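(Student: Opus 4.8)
The plan is to recast the problem in terms of residues and then to assign the edge colours one offset-class at a time. Since $G$ consists only of isolated edges, each vertex $v$ lies in a unique edge $e$, and colouring $e$ with $c$ shifts the partial sum of $v$ by $c$; hence the residue of $s_{(G,f,g)}(v)$ is $g(v)+c \pmod k$, and crucially the two endpoints $u,w$ of an edge are shifted by the \emph{same} amount $c$. Writing $a=g(u)$, $b=g(w)$ and $\delta = b-a \pmod k$, a choice of colour $c$ places $u$ in class $a+c$ and $w$ in class $a+c+\delta$. Condition~(1) forbids either endpoint from landing in class $0$ or $1$, which excludes $c$ from at most four residues (the solutions of $a+c\in\{0,1\}$ and $a+c+\delta\in\{0,1\}$); since $k\ge 5$ at least $k-4\ge 1$ colours remain valid, so condition~(1) by itself is easy. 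The real work is meeting condition~(2) at the same time.

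First I would group the edges by their offset $\delta\in\{0,1,\dots,k-1\}$, letting $m_\delta$ be the number of edges of offset $\delta$, so that $\sum_\delta m_\delta = |E| = |V|/2$. For a fixed nonzero $\delta$ the valid placements correspond to base residues in the set $B_\delta = \{r : r \notin \{0,1\} \text{ and } r + \delta \notin \{0,1\}\}$, of size at least $k-4$, where base $r$ puts one endpoint in class $r$ and the other in class $r+\delta$. I would view these placements as the edges $\{r,r+\delta\}$ of an auxiliary graph on the classes $\{2,\dots,k-1\}$, which forms a union of paths, and then select a matching $M_\delta$ in it covering roughly $k-3$ of the classes. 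Distributing the $m_\delta$ edges of offset $\delta$ as evenly as possible over the $|M_\delta|\approx (k-3)/2$ matching positions, each covered class receives at most $\lceil m_\delta/|M_\delta|\rceil \le 2m_\delta/(k-3)+1$ endpoints from this offset. The point of insisting on a matching is that each class lies in at most one chosen placement, so it accumulates tails \emph{or} heads but never both; this is exactly what avoids an extra factor of two. The offset $\delta=0$ is handled separately: its placements are ``loops'' that put both endpoints in one class, and I would spread them evenly over all $k-2$ classes, giving at most $2m_0/(k-3)+2$ endpoints per class.

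Summing the contributions over all offsets that reach a given class $i\in\{2,\dots,k-1\}$ then yields
\[
 n_{(f,g,V,k)}(i) \;\le\; \sum_{\delta}\Big(\frac{2m_\delta}{k-3} + 1\Big) \;\le\; \frac{2\,|E|}{k-3} + (k+1) \;=\; \frac{|V|}{k-3} + k + 1,
\]
where the additive error is at most one per offset from the rounding, and the at most $k$ offsets together with the extra unit from the loop term $\delta=0$ account for the final $k+1$. Since by construction every endpoint is placed in $\{2,\dots,k-1\}$, condition~(1) holds as well, so $f$ satisfies both requirements.

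The main obstacle I anticipate is precisely the coupling forced by the shared colour: because the tail and head residues of an edge always differ by the fixed $\delta$, I cannot place the two endpoints independently, and a naive even distribution over the $\ge k-4$ valid bases would load each class from both the tail side and the head side, costing a factor of two and producing a denominator like $k-4$ rather than $k-3$. Extracting the sharp denominator $k-3$ together with the additive slack $k+1$ therefore rests on verifying that the slope-$\delta$ path structure always admits a matching covering at least $k-3$ classes — with the boundary offsets $\delta=\pm 1$ (where $|B_\delta|=k-3$) and the loop case $\delta=0$ checked by hand — and on confirming that the accumulated rounding loss is at most one per offset.
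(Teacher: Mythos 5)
Your overall strategy is the same as the paper's: group the edges by the offset $\delta=g(u)-g(w)\bmod k$, view the admissible placements of an offset-$\delta$ edge as the edges $\{r,r+\delta\}$ of a circulant auxiliary graph on the residue classes with $0$ and $1$ deleted, and distribute the $m_\delta$ edges evenly over a selected family of placements. The additive bookkeeping ($k$ offsets contributing $+1$ each, plus one more from $\delta=0$, versus the paper's $(k+1)/2$ offsets contributing $+2$ each) comes out the same.

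However, there is a genuine gap at exactly the step you defer to verification. The auxiliary graph on $\{0,\dots,k-1\}$ with edges $\{r,r+\delta\}$ is a disjoint union of $d=\gcd(k,\delta)$ cycles, each of odd length $k/d$. Deleting the two vertices $0$ and $1$ turns only \emph{two} of these cycles into paths (they lie in distinct cycles whenever $d>1$); the remaining $d-2$ components are intact odd cycles, so the restriction to $\{2,\dots,k-1\}$ is \emph{not} a union of paths, and a maximum matching covers only $k-d$ classes. For $d\ge 5$ (e.g.\ $k=15$, $\delta=5$: five triangles, of which three survive intact, so a matching covers only $10<k-3=12$ classes) your matching has too few positions, and $\lceil m_\delta/|M_\delta|\rceil$ exceeds $2m_\delta/(k-3)+1$ for large $m_\delta$. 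Since the lemma is stated for all odd $k\ge 5$, including composite $k$, this case must be handled. The paper's way around it is to drop the insistence on a matching: it selects a multiset of placements in which each class appears in \emph{at most two} selected placements while the total number of selected placements is at least $k-3$ --- concretely, on each surviving odd cycle it selects every edge once, and on each path it selects a near-perfect matching with each matched edge taken twice. Each class then receives at most $2(m_\delta/t+1)$ endpoints with $t\ge k-3$, which recovers the $|V_a|/(k-3)+2$ per-offset bound without ever needing a large matching. Your proof would be repaired by adopting this doubling device on the path components and the once-around selection on the intact odd cycles.
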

\begin{proof}
Let the edges of $G$ be $\{e_1,\dots,e_r\}$, and write $e_i$ as $v_{i1}v_{i2}$, such that $g(v_{i1})-g(v_{i2}) \equiv a \pmod k$ for some $0\le a\le (k-1)/2$. Then for $a\in \{0,\dots,(k-1)/2\}$, let $G_a=(V_a,E_a)$ be the graph consisting of those edges of $G$ for which $g(v_{i1})-g(v_{i2})\equiv a \pmod k$. We shall label each $E_a$ separately.

Let $H_a$ be the graph on vertex set $\{0,\dots,k-1\}$ given by joining two integers if they differ by $a$ modulo $k$. In the case $a=0$, we allow $H_a$ to have loops. Then we choose a colouring $f_a: E_a\mapsto \{0,\dots,k-1\}$ by choosing a function $f'_a: E_a\mapsto E(H_a)$. If $e_i\in E_a$ and $f'_a(e_i) = \{u,u+a\}$, we set $f_a(e_i)$ so that $s_{(G_a,f_a,g)}(v_{i1})\equiv u+a \pmod k$, and $s_{(G_a,f_a,g)}(v_{i2})\equiv u \pmod k$. Then $n_{(G_a,f_a,g,V,k)}(i)$ is the number of edges of $E_a$ such that $f'_a(E_a)$ contains $i$.

Let $H_a'$ be the graph $H_a\setminus \{0,1\}$. Since $k$ is odd, the components of $H_a$ are odd cycles (including $1$-cycles if $a=0$), and so the components of $H_a'$ are odd length cycles, paths with an even number of vertices, and at most one path with an odd number of vertices. We pick some (not necessarily distinct) edges of $H_a'$ in each component as follows. For an odd length cycle, we pick every edge once. For a path $v_{1}\dots v_{2k}$ on an even number of vertices, we pick each of the edges $v_{2i-1}v_{2i}$ twice for $1\le i \le k$. For a path $v_{1}\dots v_{2k+1}$ on an odd number of vertices, we again pick each of the edges $v_{2i-1}v_{2i}$ twice for $1\le i \le k$. Then we have picked at least $k-3$ edges of $H_a'$, such that each vertex appears in at most $2$ of them. Let these edges be $e'_1,\dots, e'_{t}$. Then we define the function $f'_a: E_a \mapsto H_a$ to have its image in the set $\{e'_1,\dots,e'_{t}\}$, taking each element in this set at most $|E_a|/t + 1$ times. With $f_a$ defined from $f'_a$ as above, we have
\begin{enumerate}
\item $n_{(G_a,f_a,g,V_a,k)}(0)=n_{(G_a,f_a,g,V_a,k)}(1)=0$,
\item for each $2\leq i \leq k-1$, $n_{(G_a,f_a,g,V_a,k)}(i)\leq 2(|E_a|/t + 1) \le |V_a|/(k-3)+2$.
\end{enumerate}
Our colouring $f$ of $E(G)$ is defined by $f(e)=f_a(e)$ for $e\in E_a$. Then for all $0\le i \le k-1$ the colouring $f$ satisfies
\[
n_{(G,f,g,V,k)}(i) = \sum_{a=0}^{(k-1)/2}n_{(G_a,f_a,g,V_a,k)}(i).
\]
For $i = 0$ or $1$, this sum is zero, and for $2\le i \le k-1$ the sum is at most
\[
\sum_{a=0}^{(k-1)/2}|V_a|/(k-3)+2 = |V|/(k-3) + k+1,
\]
as required.
\end{proof}

Our next lemma concerns colouring the edges of a graph with no isolated vertices to achieve certain values for the $n_{(G,f,g,S,k)}(i)$ on some set $S$ --- this will be used to prove Lemma \ref{lem_k12colour}. The worst case is the one we have already addressed in Lemma \ref{lem_iso_kcolour}, when $G$ consists only of isolated edges and $S=V$.
\begin{lemma}
\label{lem_kcolour}
Let $k$ be an odd integer with $k\ge 5$. Suppose that $G=(V,E)$ is a graph with no isolated vertices, with $S\subseteq V$. Then for any colouring $g: V \mapsto \mathbb N$, there exists a colouring $f: E\mapsto \{0,\dots,k-1\}$ such that
\begin{enumerate}
\item $n_{(f,g,S,k)}(0)=n_{(f,g,S,k)}(1)=0$,
\item for each $2\leq i \leq k-1$, $n_{(f,g,S,k)}(i)\leq |S|/(k-3) + k + 2$.
\end{enumerate}
\end{lemma}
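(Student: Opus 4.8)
The plan is to reduce to the isolated-edge case already handled in Lemma \ref{lem_iso_kcolour}. Since $G$ has no isolated vertices, it admits a spanning star forest $H$, that is, a spanning subgraph every component of which is a star $K_{1,t}$ with $t\ge 1$ (this is standard: take a spanning forest, root each tree, process vertices from deepest to shallowest attaching each still-uncovered vertex to its parent, which then becomes a star centre, with a small adjustment at each root if it is left uncovered). I would first set $f(e)=0$ for every edge $e\notin H$, so that the partial sum of each vertex depends only on the colours of its incident $H$-edges. Write $V=C\sqcup L$, where $C$ is the set of star centres and $L$ the set of leaves, and in each star $j$ single out one \emph{special edge} $e_j^*=c_j\ell_j^*$ joining the centre $c_j$ to a chosen leaf $\ell_j^*$.

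The key observation is that the colour of the special edge $e_j^*$ simultaneously controls $s_{(f,g)}(c_j)$ and $s_{(f,g)}(\ell_j^*)$ exactly as an isolated edge controls its two endpoints, whereas every non-special leaf $\ell$ (with its private edge $c_j\ell$) can be forced to take any residue we please modulo $k$; moreover the contribution $B_j=\sum_{\ell\ne\ell_j^*}f(c_j\ell)$ of the non-special edges to the centre's sum never constrains anything else. This suggests partitioning $S$. Let $G^*$ be the (vertex-disjoint) collection of special edges \emph{both} of whose endpoints lie in $S$; every endpoint of $G^*$ then lies in $S$, so $|V(G^*)|\le|S|$. Let $F$ consist of the remaining vertices of $S$: the non-special leaves in $S$, together with those special-edge endpoints exactly one of whose two endpoints lies in $S$. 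Then $S=F\sqcup V(G^*)$.

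I would then colour in three phases. First, assign to each vertex of $F$ a residue in $\{2,\dots,k-1\}$ by a round-robin, so that each residue is used at most $\lceil|F|/(k-2)\rceil$ times. Second, colour all non-special $H$-edges: the private edge of each non-special leaf in $S$ is set to realise that leaf's assigned residue, and every other non-special edge is coloured $0$; this fixes every $B_j$. Define $g'(c_j)=g(c_j)+B_j$ and $g'=g$ elsewhere. Third, colour the special edges: for one not in $G^*$ with exactly one endpoint in $S$, pick its colour to realise that endpoint's assigned residue (possible for any value of $B_j$, since the special edge's colour ranges over all residues), colour the remaining special edges outside $G^*$ by $0$, and finally apply Lemma \ref{lem_iso_kcolour} to the isolated-edge graph $G^*$ with vertex colouring $g'$ to colour the special edges of $G^*$.

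It then remains to read off the bounds. No vertex of $S$ gets residue $0$ or $1$: vertices of $F$ are assigned residues in $\{2,\dots,k-1\}$, and those of $V(G^*)$ avoid $0,1$ by part (1) of Lemma \ref{lem_iso_kcolour}. For $2\le i\le k-1$, the vertices of $S$ with residue $i$ are the $F$-vertices assigned $i$ together with the $V(G^*)$-vertices of that residue, so
\[
 n_{(f,g,S,k)}(i)\ \le\ \left\lceil\frac{|F|}{k-2}\right\rceil+\left(\frac{|V(G^*)|}{k-3}+k+1\right)\ \le\ \frac{|F|}{k-3}+\frac{|V(G^*)|}{k-3}+k+2\ =\ \frac{|S|}{k-3}+k+2,
\]
using $k-2>k-3>0$ and $S=F\sqcup V(G^*)$. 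The point needing most care is the bookkeeping: only special edges with \emph{both} endpoints in $S$ may be fed to Lemma \ref{lem_iso_kcolour}, since otherwise the bound would involve $|V|$ rather than $|S|$, and one must check that $F$ and $V(G^*)$ genuinely partition $S$ so that no vertex of $S$ is left uncontrolled or double-counted. The other step to verify is that the quantities $B_j$ impose no hidden constraints, which is what lets the three colouring phases be performed independently and in this order.
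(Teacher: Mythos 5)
Your proof is correct, and it rests on the same engine as the paper's --- reduce to Lemma~\ref{lem_iso_kcolour} applied to a collection of vertex-disjoint edges both of whose endpoints lie in $S$, after arranging that every remaining vertex of $S$ can be given an exactly prescribed residue --- but the decomposition you use to get there is different. The paper works component by component with spanning trees: for each component of $G$ that is wholly contained in $S$ it reserves a single edge $e_i$, and a leaf-peeling argument on a spanning tree through $e_i$ gives exact residue control of every vertex of $S$ except the endpoints of the reserved edges, so only one edge per $S$-contained component is fed to Lemma~\ref{lem_iso_kcolour}. You instead take a spanning star forest, zero out all non-forest edges, and designate one special edge per star; exact control of the non-special leaves is then immediate (each has a private edge to its centre), and the quantities $B_j$ are harmless because the special edge's colour can absorb any value of $B_j$ when fixing the centre's residue. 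The price is that you must invoke the existence of a spanning star forest (standard, but it does need its own small argument) and you feed up to $|S|/2$ edges to Lemma~\ref{lem_iso_kcolour} rather than one per component; since the final bound uses the weaker coefficient $1/(k-3)$ for all of $S$ in either case, both routes land on $|S|/(k-3)+k+2$. Your bookkeeping is sound: $F$ and $V(G^*)$ do partition $S$, only special edges with both endpoints in $S$ are passed to Lemma~\ref{lem_iso_kcolour}, and the three colouring phases are genuinely independent in the order you perform them.
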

\begin{proof}
Let $G_1$, \dots, $G_m$ be the components of $G$, ordered such that for some $r$ we have $G_1$, \dots, $G_r\subseteq S$, and $G_{r+1}$,\dots, $G_m\not\subseteq S$. For each $1\le i \le r$, $e_i$ be any edge in $E(G_i)$. Let $V' = S\setminus\bigcup_{i=1}^r e_i$. Then we claim that for any function $t:V'\mapsto \mathbb N$ there is a colouring $f': E\setminus \{e_1,\dots,e_r\}\mapsto \{0,\dots,k-1\}$ such that for each $v\in V'$ we have $s_{(f',g)}(v)\equiv t(v)\pmod k$.

Indeed, to construct such a colouring $f'$, it is enough to construct it for each component $G_i$. If $i>r$, let $T$ be any spanning tree of $G_i$, and colour $E(G_i)\setminus T$ arbitrarily. Now, fix a vertex $v_0\in G_i\setminus S$, and colour the edges of $T$ by removing a leaf $v\neq v_0$ from $T$ and colouring the corresponding edge of $T$, such that if $v\in S$ then the total sum $s_{(G,f',g)}(v)$ is equal to $t(v)$ modulo $k$. If $i\le r$, we proceed similarly, but this time we must ensure $e_i\in E(T)$. We now colour $E(T)\setminus \{e_i\}$ by removing leaves $v$ which are not in $e_i$ from $T$, and colouring the corresponding edge of $T$ such that $s_{(G,f',g)}(v) \equiv t(v)$ modulo $k$. 

Using this, we choose $f':E\setminus \{e_1,\dots,e_r\}\mapsto \{0,\dots,k-1\}$ so that the sums $s_{(G,f',g)}(v)$ for $v\in V'$ are not congruent to $0$ or $1$ modulo $k$, and are distributed as evenly as possible among the congruency classes in the set  $\{2,\dots,k-1\}$ modulo $k$. In particular, for any $2\le i\le k$ we have $n_{(G,f',g,V',k)}\le |V'|/(k-2)+1$.

We shall set $f$ to be equal to $f'$ on  $ E\setminus \{e_1,\dots,e_r\}$; it remains to colour the $e_i$ for $1\le i \le r$. We do this using Lemma \ref{lem_iso_kcolour}. Let $G'$ be the graph consisting only of the isolated edges $e_i$, and for $v\in V(G')$ let $g'$ be the function $s_{(G,f',g)}(v)$. We apply Lemma \ref{lem_iso_kcolour} to the graph $G'$, with the vertex colouring $g'$. This guarantees us a colouring $f'':E(G')\mapsto \{0,\dots,k-1\}$ of the edges $e_i$ such that
\begin{enumerate}
\item $n_{(G',f'',g',V(G'),k)}(0)=n_{(G',f'',g',V(G'),k)}(1)=0$,
\item for each $2\leq i \leq k-1$, $n_{(G',f'',g',V(G'),k)}(i)\leq |V(G')|/(k-3) + k + 1$.
\end{enumerate}
We set $f$ to be equal to $f''$ on $E(G')$, and $f'$ otherwise. For a vertex $v\in V(G')$ we have $s_{(G,f,g)}(v) = s_{(G,f',g)}(v)+s_{(G',f'',0)}(v) = s_{(G',f'',g')}(v)$ by the definition of $g'$. For a vertex $v\notin V(G')$, we have $s_{(G,f,g)}(v) = s_{(G,f',g)}(v)$, since $f''$ labels no edge which includes $v$. Hence for $0\le i \le k-1$ we have
\begin{align*}
n_{(G,f,g,S,k)}(i) &= n_{(G,f,g,V(G'),k)}(i) + n_{(G,f,g,V',k)}(i)\\
&= n_{(G',f'',g',V(G'),k)}(i) + n_{(G,f',g,V',k)}(i).
\end{align*}
From our conditions of $f''$ and $f'$, if $i=0$ or $1$ we have $n_{(G,f,g,S,k)}(i) = 0$, and otherwise we have
\[
n_{(G,f,g,S,k)}(i) \le |V(G')|/(k-3) + k + 1 + |V'|/(k-2) + 1 \le |S|/(k-3) + k + 2,
\]
as required.
\end{proof}

This allows us to prove a lemma about labelling the edges of a graph $G$ with a vertex partition $V_1\cup V_2$, so that for $i=1$, $2$ the sums at vertices in $V_i$ are not equal to $0$ or $1$ modulo $k_i$, and there are not too many of these sums in any congruency class modulo $k_i$. This lemma will be needed in Section \ref{sec_form_to_anti}. To prove the lemma, we shall consider a spanning subgraph $H$ of $G$. Starting with a near-arbitrary labelling of $E(G)$, we shall first switch the labels on the edges in $E(H)$ with some labels on edges in $V_2$, to fix sums of vertices in $V_1$ modulo $k_1$. We shall then switch the labels on the edges in $E(H)$ with some labels on edges in $V_1$, to fix sums of vertices in $V_2$ modulo $k_2$, while not affecting our labelling modulo $k_1$. For each of these steps, we shall invoke Lemma \ref{lem_kcolour}.

Given subsets $A$ and $B\subseteq V(G)$, we denote by $E_G(A)$ the set of edges of $G$ contained in $A$, and $E_G(A,B)$ the set of edges of $G$ which can be written $ab$ with $a\in A$ and $b\in B$.

\begin{lemma} \label{lem_k12colour}
Let $k_1$ and $k_2$ be coprime odd integers, both at least $5$, let $G=(V,E)$ be a graph with no isolated vertices, and let $L$ be a set of integers of size $|E|$. Suppose that there exists a partition of $V$ into vertex classes $V_1$ and $V_2$ such that $|E_G(V_1)|\ge (k_1k_2+1)|V|$, and $|E_G(V_2)|\ge (k_1+1)|V|$, and that $L$ contains at least $|V|-1$ labels in each congruency class modulo $k_1k_2$, and at least $(k_2+1)(|V|-1)$ labels in each class modulo $k_1$. Then for any function $g: V\mapsto \mathbb N$ there exists a bijective labelling $f: E \mapsto L$ such that
\begin{enumerate}
 \item\label{it_k1_zero} $n_{(G,f,g,V_1,k_1)}(0)=n_{(G,f,g,V_1,k_1)}(1)=0$,
\item\label{it_k1_nonzero} for each $2\le i\le k_1-1$, $n_{(G,f,g,V_1,k_1)}(i)\le |V_1|/(k_1-3) + k_1 + 2$,
\item\label{it_k2_zero} $n_{(f,g,V_2,k_2)}(0)=n_{(G,f,g,V_2,k_2)}(1)=0$,
\item\label{it_k2_nonzero} for each $2\le i\le k_2-1$, $n_{(G,f,g,V_2,k_2)}(i)\le |V_2|/(k_2-3) + k_2 + 2$.
\end{enumerate}
\end{lemma}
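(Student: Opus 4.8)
The plan is to meet the two groups of congruence conditions in two stages, using a single small spanning subgraph $H$ as the set of ``control'' edges in both stages and invoking Lemma \ref{lem_kcolour} each time. Since $G$ has no isolated vertices I take $H$ to be a spanning forest of $G$ (a spanning tree in each component); then $H$ has no isolated vertices and $|E(H)|\le |V|-1$, which is the source of every ``$|V|-1$'' appearing in the hypotheses. Write $E_G(V_1)' = E_G(V_1)\setminus E(H)$ and $E_G(V_2)' = E_G(V_2)\setminus E(H)$; the edge-count hypotheses then give $|E_G(V_1)'|\ge k_1k_2|V|$ and $|E_G(V_2)'|\ge k_1|V|$, so both sets are large enough to serve as label reservoirs. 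These two reservoirs are disjoint and lie strictly inside $V_1$ and $V_2$ respectively, which is exactly what will let each stage alter the relevant sums without collateral damage.

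First I would fix a starting bijection $f_0\colon E\to L$ that pre-loads the reservoirs. Using that $L$ has at least $|V|-1$ labels in each class modulo $k_1k_2$, I place $|V|-1$ labels of every residue mod $k_1k_2$ into $E_G(V_1)'$ (there is room, since $k_1k_2(|V|-1)\le |E_G(V_1)'|$). From the labels that remain, I put at least $|V|-1$ labels of every residue mod $k_1$ into $E_G(V_2)'$; here the bound ``$(k_2+1)(|V|-1)$ labels in each class mod $k_1$'' is calibrated precisely so that, after spending $k_2(|V|-1)$ of each $k_1$-class on $E_G(V_1)'$, at least $|V|-1$ of each $k_1$-class survive. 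The remaining labels are assigned arbitrarily to the remaining edges.

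\emph{Stage 1 (fix $V_1$ mod $k_1$).} Set $g_1(v)=g(v)+\sum_{v\in e\in E\setminus E(H)}f_0(e)$ and apply Lemma \ref{lem_kcolour} to $H$ with $S=V_1$ and $k=k_1$ to obtain target residues $r(e)$ on $E(H)$. I realize them by swapping, for each $e\in E(H)$, the label $f_0(e)$ with a label of residue $r(e)\pmod{k_1}$ drawn from $E_G(V_2)'$. Because $E_G(V_2)'$ lies inside $V_2$, no $V_1$-sum changes except through the intended relabelling of $E(H)$, so the new bijection $f_1$ satisfies $s_{(G,f_1,g)}(v)\equiv s_{(H,r,g_1)}(v)\pmod{k_1}$ for $v\in V_1$; conclusions \eqref{it_k1_zero}--\eqref{it_k1_nonzero} follow from Lemma \ref{lem_kcolour}, and availability holds since at most $|E(H)|\le|V|-1$ labels are needed while $\ge|V|-1$ of each $k_1$-class sit in the reservoir. \emph{Stage 2 (fix $V_2$ mod $k_2$).} Set $g_2(v)=g(v)+\sum_{v\in e\in E\setminus E(H)}f_1(e)$ and apply Lemma \ref{lem_kcolour} to $H$ with $S=V_2$, $k=k_2$ to get targets $r'(e)$. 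For each $e\in E(H)$ I swap $f_1(e)$ with a label drawn from $E_G(V_1)'$ that agrees with $f_1(e)$ modulo $k_1$ and equals $r'(e)$ modulo $k_2$ --- a single residue modulo $k_1k_2$ by coprimality. Since both swapped labels agree mod $k_1$, every altered edge (whether in $E(H)$ or in $E_G(V_1)'$) keeps its $k_1$-residue, so all $V_1$-sums are preserved mod $k_1$; and $E_G(V_1)'\subseteq V_1$ means only $E(H)$ affects $V_2$-sums, giving conclusions \eqref{it_k2_zero}--\eqref{it_k2_nonzero}. Availability again holds because $E_G(V_1)'$ carries $\ge|V|-1$ labels of each of the $k_1k_2$ classes and at most $|V|-1$ are used.

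The main obstacle is Stage 2: simultaneously correcting $V_2$ modulo $k_2$ while leaving $V_1$ untouched modulo $k_1$. This forces every swap to be residue-neutral mod $k_1$, so the freedom to set residues mod $k_2$ is confined to a fixed mod-$k_1$ class; coprimality of $k_1$ and $k_2$ is exactly what turns that confined freedom into a full residue mod $k_2$, and the two hypotheses on $L$ are tuned so that the reservoirs $E_G(V_1)'$ and $E_G(V_2)'$ never run dry. The only remaining bookkeeping is checking that $g_1$ and $g_2$ genuinely record the non-$H$ contributions at the relevant vertices even after the swaps --- which they do, because each stage only moves labels onto edges lying on the opposite side of the partition from the vertices it is trying to control.
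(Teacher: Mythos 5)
Your proposal is correct and follows essentially the same route as the paper: a small spanning subgraph $H$ with no isolated vertices and at most $|V|-1$ edges, pre-loaded label reservoirs inside $E_G(V_1)\setminus E(H)$ and $E_G(V_2)\setminus E(H)$, two applications of Lemma \ref{lem_kcolour} to $H$, and label swaps that fix $V_1$ modulo $k_1$ first and then $V_2$ modulo $k_2$ while preserving all residues modulo $k_1$ via the Chinese Remainder Theorem. Your explicit accounting of why $(k_2+1)(|V|-1)$ labels per class modulo $k_1$ suffices after the modulo-$k_1k_2$ reservoir is filled is a detail the paper leaves implicit, but the argument is otherwise the same.
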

\begin{proof}
First, let $H$ be a minimal spanning subgraph of $G$ with no isolated vertices --- so we have $|E(H)|\le |V|-1$. Now, let $A_1$ be a subset of $E_G(V_1)\setminus E(H)$, and $A_2$ a subset of $E_G(V_2)\setminus E(H)$, containing $k_1k_2|E(H)|$ and $k_1|E(H)|$ edges respectively; these exist because $|E_G(V_1)\setminus E(H)|\ge k_1k_2|V| > k_1k_2 |E(H)|$, and similarly for $V_2$. We label $A_1$ and $A_2$ injectively from $L$ such that for each $i\in \{0,...,k_1k_2-1\}$ there are $|E(H)|$ edges in $A_1$ with labels congruent to $i$ modulo $k_1k_2$, and for each $i\in \{0,...,k_1-1\}$ there are $|E(H)|$ edges in $A_2$ with labels congruent to $i$ modulo $k_1$. There are enough labels of $L$ in each congruency class to do this by our restrictions on $L$. Next, we assign the other labels in $L$ injectively but otherwise arbitrarily to $E \setminus (A_1\cup A_2)$ --- let the resulting bijective labelling from $E$ to $L$ be $f_2$.

Now, we define $g_2: V\mapsto \mathbb N$ by $g_2(v)=s_{(G,f_2,g)}(v)-\sum_{v\in e \in E(H)} f_2(e)$ --- that is, $s_{(G,f_2,g)}(v)$, but ignoring the labels of edges in $H$. Applying Lemma \ref{lem_kcolour} to the graph $H$, with $S=V_1$, $k=k_1$, and $g=g_2$ gives us a colouring $f': E(H)\mapsto \{0,...,k_1-1\}$ such that
\begin{enumerate}
\item $n_{(H,f',g_2,V_1,k_1)}(0)=n_{(H,f',g_2,V_1,k_1)}(1)=0$,
\item for each $2\le i \le k_1-1$, $n_{(H,f',g_2,V_1,k_1)}(i)\le |V_1|/(k_1-3) + k_1 + 2$.
\end{enumerate}
We use this colouring $f'$ to define a new bijective labelling $f_1: E\mapsto L$ as follows. For every edge $e\in E(H)$, we choose an edge $a(e)\in A_2$ such that $f_2(a(e))\equiv f'(e)$ (mod $k_1$). We choose the $a(e)$ to be distinct --- this is possible, since for each $i\in \{0,\dots,k_1-1\}$ there are $|E(H)|$ edges $e'\in A_2$ with $f_2(e')\equiv i \pmod{k_1}$. Now, for each $e\in E(H)$, we set $f_1(e)=f_2(a(e))$, and $f_1(a(e))=f_2(e)$, and for edges not in $E(H)$ or the image of $a$ we set $f_2=f_1$.

To construct this colouring from $f_2$, we have taken some pairs of edges, with no edge appearing in two pairs, and swapped the labels on each pair. Hence the labels used by $f_1$ are exactly the same as those used by $f_2$, and so $f_1$ is a bijective labelling from $E\mapsto L$. By our choice of $g_2$, $s_{(G,f_1,g)}(v)\equiv s_{(H,f',g_2)}(v) \pmod{k_1}$ for each $v\in V_1$, and so $f_1$ satisfies Conditions \ref{it_k1_zero} and \ref{it_k1_nonzero} of the lemma.

We proceed similarly to change the sums at vertices of $V_2$ modulo $k_2$ --- but this time we shall also ensure we do not change the labelling modulo $k_1$. We define $g_1: V\mapsto \mathbb N$ by $g_1(v)=s_{(G,f_1,g)}(v)-\sum_{v\in e \in E(H)} f_1(e)$. Applying Lemma \ref{lem_kcolour} to the graph $H$, with $S=V_2$, $k=k_2$, and $g=g_1$ gives us a colouring $f'': E(H)\mapsto \{0,...,k_2-1\}$ such that
\begin{enumerate}
\item $n_{(H,f'',g_1,V_2,k_2)}(0)=n_{(H,f'',g_1,V_2,k_2)}(1)=0$,
\item for each $1\le i \le k_2-1$, $n_{(H,f'',g_1,V_2,k_2)}(i)\le |V_2|/(k_2-1) + k_2 + 2$.
\end{enumerate}
We use this colouring $f''$ to define a new bijective labelling $f: E\mapsto L$ as follows. For every edge $e\in E(H)$, we choose an edge $a(e)\in A_1$ such that $f_1(a(e))\equiv f''(e)$ (mod $k_2$), but now we also insist that $f_1(a(e)) \equiv  f_1(e)$ (mod $k_1$). We choose the $a(e)$ to be distinct --- this is possible, since for each $i\in \{0,\dots,k_1k_2-1\}$ there are $|H|$ edges $e'\in A_1$ with $f_1(e')\equiv i \pmod{k_1k_2}$. Now, for each $e\in E(H)$, we set $f(e)=f_1(a(e))$, and $f(a(e))=f_1(e)$, and for edges not in $E(H)$ or the image of $a$ we set $f=f_1$.

As before, to construct $f$ from $f_1$, we have taken some pairs of edges and swapped the labels on each pair, and again no edge appears in two pairs. Hence the labels used by $f$ are exactly the same as those used by $f_1$, and so $f$ is also a bijective labelling from $E\mapsto L$. By our choice of $g_1$, $s_{(G,f,g)}(v)\equiv s_{(H,f'',g_1)}(v) \pmod{k_2}$ for each $v\in V_2$, and so $f$ satisfies Conditions \ref{it_k2_zero} and \ref{it_k2_nonzero} of the lemma. Since the labellings $f_1$ and $f$ are identical viewed modulo $k_1$, $f$ also satisfies Conditions \ref{it_k1_zero} and \ref{it_k1_nonzero}.
\end{proof}

The final lemma of the section is another simple technical lemma, which concerns labelling a graph with a vertex partition; this lemma will be used in Section \ref{sec_form_to_anti}. The proof is similar in style to that of Lemma \ref{lem_kcolour}.

\begin{lemma}\label{lem_AB_colour}
Let $k_1$ and $k_2$ be integers with $k_1\ge 5$. Let $G=(V,E)$ be a graph with a vertex partition into vertex sets $A$ and $B$, and let $B'$ be a set of vertices contained in $B$. Suppose that every vertex in $A$ has at least two edges to vertices in $B$, and that every vertex in $B'$ has at least one edge to a vertex in $A$. Suppose further that $L$ is a set of at least $|E|+k_1k_2(2|A|+|B'|)$ integers, containing at least $2|A|+|B'|$ representatives of each congruency class modulo $k_1k_2$. Then for any functions $g:V\mapsto \mathbb{N}$ and $t:A\mapsto \mathbb{N}$, there is an injective labelling $f: E\mapsto L$ such that
\begin{enumerate}
 \item for each $v\in A$, $s_{(f,g)}(v)\equiv t(v) \pmod {k_1k_2}$,
 \item $n_{(f,g,B',k_1)}(0)=n_{(f,g,B',k_1)}(1)=0$,
\item for each $2\le i\le k_1-1$, $n_{(f,g,B',k_1)}(i)\le |B'|/(k_1-4)+2k_1-3$.
\end{enumerate}
\end{lemma}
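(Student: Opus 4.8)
The plan is to mirror the proof of Lemma~\ref{lem_kcolour}: reserve a small set of designated edges that carry all the work, colour the remaining edges arbitrarily, and absorb their contributions into a modified vertex weighting, so that only the designated edges remain to be chosen.

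First I would reserve designated edges. For each $w\in B'$ choose one edge $b_w$ from $w$ to $A$ (these exist and are distinct, since they have distinct endpoints in $B$), and then for each $v\in A$ add further edges from $v$ to $B$ until $v$ is incident with at least two reserved edges; since every vertex of $A$ sends at least two edges into $B$ this is always possible, and the total number of reserved edges is at most $2|A|+|B'|$. Let $H$ be the spanning subgraph of these designated edges, so that in $H$ every vertex of $A$ has degree at least $2$ and every vertex of $B'$ has degree at least $1$. I colour $E\setminus E(H)$ injectively and otherwise arbitrarily from $L$, and replace $g$ by $g'(v)=g(v)+\sum_{v\in e\in E\setminus E(H)}f(e)$; it then suffices to colour $E(H)$ from the remaining labels so that $s_{(H,f,g')}(v)\equiv t(v)\pmod{k_1k_2}$ for $v\in A$ and the residues $s_{(H,f,g')}(w)\bmod k_1$ for $w\in B'$ avoid $0,1$ and are suitably spread. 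Because we set aside $2|A|+|B'|$ representatives of each class modulo $k_1k_2$ in $L$, a label of any prescribed residue will always be available when we need one.

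The key device is to regard $H$ as a multigraph $M$ on the vertex set $B$, each vertex $v\in A$ of $H$-degree two becoming a single edge of $M$ joining its two neighbours in $B$ and carrying the pair of labels $f(e_v^1),f(e_v^2)$, constrained only by $f(e_v^1)+f(e_v^2)\equiv t(v)-g'(v)\pmod{k_1k_2}$; a vertex of $A$ of larger $H$-degree is treated as a star, contributing one fewer degree of freedom than its degree. Every $w\in B'$ is then a non-isolated vertex of $M$. Each edge of $M$ carries one genuine degree of freedom once its sum constraint is imposed: fixing one of its two labels forces the other modulo $k_1k_2$, so choosing the free label settles the residue modulo $k_1$ of one of the edge's two $B$-endpoints while automatically satisfying the $A$-constraint. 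I would then process each component of $M$ by rooting a spanning tree at a vertex of $B$ (choosing it outside $B'$ whenever the component contains such a vertex) and assigning labels from the leaves up: each tree edge is made responsible for its child endpoint, using its free label to place that child's residue into a permitted class modulo $k_1$, while its forced label is absorbed into the running sum of the parent. Non-tree edges of $M$ are assigned first, subject only to their sum constraints, and folded into $g'$. In this way every $A$-constraint is met exactly and every non-root vertex lying in $B'$ is controlled modulo $k_1$.

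The main obstacle, exactly as in Lemma~\ref{lem_iso_kcolour}, is the single deficient vertex left at the root of each tree: a spanning tree on $t$ vertices has only $t-1$ edges, so one vertex per component cannot be assigned its own free label. When the root can be taken outside $B'$ this costs nothing; but if an entire component of $M$ lies inside $B'$ the root is a vertex $w\in B'$ whose residue must instead be controlled jointly with that of an adjacent child $\gamma$ through the single free label of the tree edge $\{w,\gamma\}$. Since that label moves the residues of $w$ and $\gamma$ in opposite directions, we must avoid the two bad classes of each, i.e.\ up to four classes in all; as $k_1\ge 5$ at least $k_1-4$ permissible classes remain, which is precisely the source of the denominator $k_1-4$ in Condition~3. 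Distributing the residues of the vertices of $B'$ as evenly as possible over their permitted classes (at least $k_1-2$ classes for the freely controlled vertices and at least $k_1-4$ for the paired ones) then gives the bound $n_{(f,g,B',k_1)}(i)\le |B'|/(k_1-4)+2k_1-3$, the additive constant absorbing the rounding in this distribution together with the $O(1)$ specially handled vertices per component; checking that enough labels of each needed residue remain available in $L$ throughout completes the argument.
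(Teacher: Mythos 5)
Your proposal is correct and follows essentially the same route as the paper: reserve at most $2|A|+|B'|$ designated edges, absorb the arbitrarily labelled remainder into a modified $g$, control all but one $B'$-vertex per component by a spanning-tree, leaf-upward assignment using the $2|A|+|B'|$ labels per residue class, and handle the deficient root of each component contained in $A\cup B'$ by pairing it with a neighbour through a sum-constrained pair of labels --- which is exactly the paper's reserved path $b_{i1}a_ib_{i2}$ and the source of its $k_1-4$ denominator and $2k_1-3$ additive constant. Your contraction of the reserved edges to a multigraph on $B$ is only a cosmetic reformulation of the paper's direct argument on the components of $E'$.
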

\begin{proof}
 Let $E'\subseteq E$ be a set of edges which contains at least $2$ edges to $B$ from every vertex of $A$, and at least $1$ edge to $A$ from every vertex of $B'$, and has at most $2|A|+|B'|$ edges. Let $G'$ be the graph $(V,E')$. Also, let $L' \subseteq L$ be a set of $k_1k_2(2|A|+|B'|)$ labels, with $2|A|+|B'|$ labels in each congruency class modulo $k_1k_2$. Let $f':E\setminus E'\mapsto L\setminus L'$ be an arbitrary injective mapping; such a mapping exists, as $|L|\ge  |E| + |L'|$. For $v\in V$, let $g'(v)= s_{(G,f',g)}(v)$.

Now, $L'$ contains $2|A|+|B'|$ labels in each congruency class modulo $k_1k_2$, whereas $E'$ contains at most $2|A|+|B'|$ edges, so we can label $E'$ however we wish modulo $k_1k_2$ while labelling injectively from $L'$. Hence to label $E'$ we first define a colouring $f_k:E'\mapsto \{0,\dots,k_1k_2-1\}$, and then assign labels of $L'$ injectively to agree with $f_k$ modulo $k_1k_2$. Let the components of $G'$ be $G'_1$, \dots, $G'_m$, ordered such that for some $r$ we have $G'_1$, \dots, $G'_r\subseteq A\cup B'$, and $G'_{r+1}$,\dots, $G'_m\not\subseteq A\cup B'$. For each $1\le i \le r$, select a vertex $a_i\in A\cap G'_i$, and two vertices $b_{i1}$ and $b_{i2}\in B'\cap G'_i$ which are neighbours of $A$ in $G'$. Let 
\[
V'= (A\cup B') \setminus \left(\bigcup_{1\le i \le r}\{a_i,b_{i1},b_{i2}\}\right).
\]
Then we can colour $E' \setminus \bigcup_{1\le i \le r} \{a_ib_{i1},a_ib_{i2}\}$ such that the vertices in $V'$ receive any specified sums modulo $k_1k_2$. We do this similarly to in the proof of Lemma \ref{lem_kcolour}; we take a spanning tree $T_i$ for each component $G'_i$ of $G'$, and when $i\le r$ ensure that the path $b_{i1}a_ib_{i2}$ is contained in $T_i$. We colour the edges not in some $T_i$ arbitrarily, and then remove leaves which are not in $\{a_i,b_{i1},b_{i2}\}$ from $T_i$, colouring the corresponding edges so that every vertex $v\in V'$ receives the desired sum modulo $k$.

Using this, we choose a colouring $f_k':E' \setminus \bigcup_{1\le i \le r} \{a_ib_{i1},a_ib_{i2}\}\mapsto \{0,\dots,k_1k_2-1\}$ such that vertices $v\in V'\cap A$ receive sum congruent to $t(v)$ modulo $k_1k_2$, and vertices $v\in V'\cap B'$ receive sums which are not congruent to $0$ or $1$ modulo $k_1$, and are split as evenly as possible between the congruency classes in the set $\{2,\dots,k_1-1\}$ modulo $k_1$. For our final colouring $f_k$, we shall take $f_k=f_k'$ on the domain of $f_k'$.

At this stage, the uncoloured edges consist of $r$ independent copies of $P_3$, $b_{i1}a_ib_{i2}$, with $a_i \in A$ and $b_{i1}$, $b_{i2} \in B'$. However we colour the edges $b_{i1}a_i$ and $b_{i2}a_i$, we shall have
\[
s_{(f_k,g')}(b_{i1})+s_{(f_k,g')}(b_{i2})- 2s_{(f_k,g')}(a_i) = s_{(f_k',g')}(b_{i1}) + s_{(f_k',g')}(b_{i2}) - 2s_{(f_k',g')}(a_i),
\]
and so the constraint that $s_{(f_k,g')}(a_i)$ is congruent to $t(a_i)$ modulo $k_1k_2$ leads to a constraint of the form $s_{(f_k,g')}(b_{i1})+s_{(f_k,g')}(b_{i2})\equiv m_i \pmod {k_1k_2}$ for some $m_i\in \{0,\dots,k_1k_2-1\}$. For each $j \in \{0,\dots,k_1-1\}$, let $I_j\subseteq \{1,\dots,r\}$ be the set of those $i$ with $m_i \equiv j\pmod{k_1}$, $A'_j$ be  $\{a_i:i\in I_j\}$, $B'_j$ be $\bigcup_{i\in I_j}\{b_{i1},b_{i2}\}$, and $E'_j$ be $\bigcup_{i\in I_j} \{a_ib_{i1}, a_ib_{i2}\}$. We shall colour the edge sets $E'_j$ independently of each other.

Writing $I_j = \{i_1,\dots,i_s\}$, we wish to pick the colours of the edges $a_{i_\ell}b_{i_\ell 1}$ and $a_{i_\ell}b_{i_\ell 2}$. Let $\ell \equiv c \pmod {k_1-4}$, where $c\in \{1,\dots,k_1-4\}$. Then we colour $a_{i_\ell}b_{i_\ell 1}$ and $a_{i_\ell}b_{i_\ell 2}$ so that $s_{(f_k,g')}(a_{i_\ell})\equiv t(a_{i_\ell}) \pmod {k_1k_2}$, and $s_{(f_k,g')}(b_{i_\ell 1})\equiv d \pmod {k_1}$, where $d$ is the $c^{th}$ element of the set $\{0,\dots,k_1-1\}\setminus \{0,1,j,j-1\}$.

With this colouring, we have $s_{(f_k,g')}(b_{i_\ell 2})\equiv j-d \pmod {k_1}$, and in particular $s_{(f_k,g')}(b_{i_\ell 2})$ is not congruent to $0$ or $1$ modulo $k_1$. Also, for $k-4$ consecutive members of $I_j$, we have $k-4$ pairs $(b_{i_\ell 1},b_{i_\ell 2})$. Of these, exactly one of the $b_{i_\ell1}$ has $s_{(f_k,g')}(b_{i_\ell 1})$ congruent to each element of $\{0,\dots,k-1\}\setminus \{0,1,j,j-1\}$ modulo $k$, and the same holds for the $b_{i_\ell 2}$. Hence the $n_{(f_k,g',B'_j,k_1)}(i)$ satisfy
\begin{enumerate}
 \item $n_{(f_k,g',B'_j,k_1)}(i) = 0 \text{ for all } i\in \{0,1,j-1,j\}$,
 \item $n_{(f_k,g',B'_j,k_1)}(i) \le |B'_j|/(k_1-4)+2 \text{ for all }  1\le i \le k_1,\,i\notin\{0,1,j-1,j\}$.
\end{enumerate}
Then for any $2\le a \le k_1-1$, $n_{(f_k,g',B',k_1)}(a)$ is at most
\[
 |V'\cap B|/(k_1-2)+1 + \sum_{\substack{0\le j \le k_1-1,\\ j\notin\{a,a+1\}}} (|B'_j|/(k_1-4)+2)\le |B'|/(k_1-4) + 2k_1 -3.
\]
Taking any injective labelling $f:E\mapsto L$ which is equal to $f'$ on $E\setminus E'$ and agrees with $f_k$ modulo $k_1k_2$ on $E'$, $f$ satisfies the conditions of the lemma; indeed for all $v$ in $A\cup B'$ we have $s_{(G,f,g)}(v)\equiv s_{(G,f_k,g')}\pmod {k_1k_2}$, so the properties we require for $f$ follow from those we have proved for $f_k$.
\end{proof}

\section{Reduction to a minimum degree problem}\label{sec_av_to_min}
In this section, our aim is to reduce the problem of producing an antimagic labelling for a graph with large average degree to a similar problem for a graph with large minimum degree. To do this, we must first recall the notion of the $r$-core of a graph. The \emph{$r$-core} of a graph $G=(V,E)$, which we denote $V_{c_r}$, is the largest set of vertices such that every vertex $v\in V_{c_r}$ has $|E_G(\{v\},V_{c_r})|\ge r$. The $r$-core of $G$ can be obtained by successively removing vertices of $G$ with degree at most $r-1$; this shows that the subgraph of $G$ induced by $r$-core of $G$ contains all but at most $(r-1)|V\setminus V_{c_r}|$ of the edges of $G$.

To label a graph $G=(V,E)$ with large average degree, we shall pick appropriate integers $\delta$ and $k$. Defining $V_1$ to be the $\delta$-core of $G$, and $V_0=V\setminus V_1$, we shall first label $E_G(V_0,V)$, so that the sums at vertices of $V_0$ are all divisible by $k$, and none are equal. We shall then label $E_G(V_1)$ so that the sums at vertices of $V_1$ are not divisible by $k$, and none are equal --- this gives us our antimagic colouring. For the first stage, we need the following lemma:
\begin{lemma}\label{lem_V0_0}
Let $k$ be an odd positive integer, let $G=(V,E)$ be a graph, and let $V_0$ and $V_1$ be vertex sets partitioning $V$. Then there is a colouring $f:E_G(V_0,V)\mapsto \{0,\dots,k-1\}$ so that  $s_{(f,0)}(v)$ is divisible by $k$ for every vertex $v\in V_0$, and each colour is used at most $|E_G(V_0,V)|/k+|V_0|$ times.
\end{lemma}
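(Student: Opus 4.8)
The plan is to follow the spanning-tree and leaf-removal strategy used in Lemma~\ref{lem_kcolour}, but now aiming to control how often each \emph{colour} is used rather than how the vertex sums are distributed. Write $m=|E_G(V_0,V)|$ and let $H$ be the spanning subgraph of $G$ with edge set $E_G(V_0,V)$. Every edge incident to a vertex of $V_0$ lies in $H$, so for $v\in V_0$ the partial sum $s_{(f,0)}(v)$ is exactly the total colour at $v$ in $H$; a vertex of $V_0$ isolated in $H$ has sum $0$ and imposes no constraint, so I may discard it. I would treat each connected component of $H$ separately, and in each component take a spanning tree, rooting it at a vertex of $V_1$ whenever the component meets $V_1$.

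In each component I would split the edges into \emph{free} edges and \emph{forced} edges. In a component rooted at a vertex of $V_1$, I declare forced exactly the parent edges of the non-root vertices lying in $V_0$, and declare everything else free (all non-tree edges, together with the parent edges of $V_1$-vertices). The forced edges are coloured by leaf removal: peeling leaves from the bottom of the tree, I colour the parent edge of each $V_0$-leaf $v$ so that $s_{(f,0)}(v)\equiv 0 \pmod k$, which is possible because at that moment its parent edge is the only uncoloured edge at $v$. Since the root lies in $V_1$ it needs no correction, so this colours the whole component and leaves every vertex of $V_0$ with sum divisible by $k$. The key bookkeeping point is that each non-root $V_0$-vertex contributes exactly one forced edge, so over all components the number of forced edges is at most $|V_0|$.

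The main obstacle is the components that avoid $V_1$ altogether, where the root also lies in $V_0$ and its constraint is not handled by leaf removal. Here I would use the identity $\sum_{v}s_{(f,0)}(v)=2\sum_{e}f(e)$: the non-root sums are all $\equiv 0$, and since $k$ is odd the root sum is $\equiv 0\pmod k$ precisely when the total colour $\sum_e f(e)$ of the component is $\equiv 0 \pmod k$. When the component is a tree this pins the colouring to the all-zero one (so it contributes only to colour $0$); otherwise a non-tree edge can be used to correct the total while the remaining non-tree edges stay free for balancing. Either way the edges dictated by the constraints in such a component again number at most one per $V_0$-vertex, so across the whole graph the total number of constraint-dictated edges remains at most $|V_0|$.

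It then remains to balance. I would colour the free edges as evenly as possible, so that among them each colour is used at most $\lceil(\#\text{free})/k\rceil\le m/k+1$ times, while the at most $|V_0|$ constraint-dictated edges add at most $|V_0|$ to any single colour count; a little care in the bookkeeping — steering the free edges towards the currently least-used colours — absorbs the spare additive constant and yields the stated bound $m/k+|V_0|$. I expect the genuine difficulty to be exactly this simultaneity: the edges forced by the divisibility requirement are outside my control when I try to balance colours, so the whole argument turns on pinning their number to $|V_0|$, which is precisely the slack permitted in the statement, and on correctly dispatching the anchor-free components lying entirely inside $V_0$.
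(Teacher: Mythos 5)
Your overall strategy is the paper's: colour everything outside a carefully chosen spanning forest as evenly as possible, then fix the sums of $V_0$-vertices by leaf removal along forced edges, anchoring each component at a $V_1$-vertex when one is available. Your handling of components meeting $V_1$, and your count of at most one forced edge per $V_0$-vertex, match the paper's first case exactly. (The $+1$ you worry about in the final colour count is glossed over at the same level of precision in the paper itself, so I would not dwell on it.)

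The genuine gap is in the components avoiding $V_1$. The assertion that ``a non-tree edge can be used to correct the total'' is not justified and is false for an arbitrary non-tree edge. After leaf removal, the root's sum satisfies $s(\rho)\equiv\sum_{xy\notin T}f(xy)\bigl((-1)^{d(x)}+(-1)^{d(y)}\bigr)\pmod k$, where $d$ is depth in the tree $T$; so perturbing a non-tree edge $e_0=xy$ changes $s(\rho)$ by $\pm 2\,\Delta f(e_0)$ if the fundamental cycle of $e_0$ is \emph{odd}, and by $0$ if it is \emph{even} (the leaf-removal corrections propagating up from $x$ and $y$ exactly cancel the change). Consequently: (i) you must choose $e_0$ to close an odd cycle --- e.g.\ in $C_5$ plus a chord, the chord closing the $4$-cycle gives you no control over the root; and (ii) in a \emph{bipartite} anchor-free component no non-tree edge gives any control at all, so your correction mechanism cannot work there. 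The statement survives in case (ii) only because no correction is needed: summing over one side of the bipartition gives $\sum_{v\in A}s(v)=\sum_e f(e)=\sum_{v\in B}s(v)$, which forces $s(\rho)\equiv 0$ automatically. This bipartite/non-bipartite dichotomy is exactly the case split the paper makes (an odd unicyclic spanning subgraph, with the cyclic system $a_i+a_{i+1}\equiv b_i$ solvable because the cycle length and $k$ are both odd, versus the double-counting argument over a bipartition); your single sentence collapses both cases into a claim that, as stated, would fail. The fix is short --- pick a non-tree edge with odd fundamental cycle when the component is non-bipartite, and invoke the bipartition identity otherwise --- but it is a missing step, not a cosmetic one.
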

\begin{proof}
It is enough to prove the lemma for a connected graph; indeed, for a general graph we can simply apply the lemma to each connected component. We split the proof into three cases. Firstly, if $V_1$ is non-empty, let $F$ be a forest with edges in $E_G(V_0,V)$ which spans $V_0$, and has exactly one vertex of $V_1$ in each component. We colour $E_G(V_0,V)\setminus E(F)$ as evenly as possible with $\{0,\dots,k-1\}$, and otherwise arbitrarily. Then there is some colouring of $F$ such that the overall sum at every vertex in $V_0$ is divisible by $k$; we can obtain such a colouring by succesively removing leaves $v$ of $F$ which are in $V_0$, and colouring the corresponding edge to ensure the sum at $v$ is divisible by $k$.

Secondly, if $V_1$ is empty and $G$ is not bipartite, let $G'$ be any connected subgraph of $G$ which spans $V$ and has exactly one cycle $C$, which is of odd length --- so $G'$ has $|V|$ edges. We colour $E\setminus E(G')$ as evenly as possible with $\{0,\dots,k-1\}$, and otherwise arbitrarily. Then we claim that there is some colouring of $E(G')$ so that the overall sum at every vertex in $V$ is divisible by $k$. We obtain this colouring by succesively removing degree $1$ vertices $v$ from $G'$, colouring the corresponding edges to ensure the overall sum at $v$ is divisible by $k$. We do this until we are left with only the odd cycle $C$; we now need to colour the edges of $C$ so that every vertex on it has sum divisible by $k$. If the cycle is of length $r$, this is equivalent to solving a system of equations
\begin{align*}
a_1+a_2 &\equiv b_1 \pmod k \\
a_2+a_3 &\equiv b_2 \pmod k \\
&\dots \\
a_r+a_1 &\equiv b_r \pmod k. \\
\end{align*}
Here, the $a_i$ correspond to the colours being given to the edges of the cycle $C$, and the $b_i$ to the remaining sum needed at the vertices of $C$ to bring the sum to $0$ modulo $k$. Since $r$ and $k$ are both odd, this system of equations does indeed have a solution.

Finally, suppose $V_1$ is empty and $G$ is bipartite, with vertex classes $A$ and $B$. Let $T$ be a spanning tree of $G$, and $v_0$ any vertex of $G$ --- say $v_0\in A$. We colour $E\setminus E(T)$ as evenly as possible with $\{0,\dots,k-1\}$, and otherwise arbitrarily. Then there is some colouring of $T$ such that every vertex in $V$ other than $v_0$ has induced sum divisible by $k$; as ever, we obtain such a colouring by succesively removing leaves of $T$ and colouring the corresponding edge. Let $f$ be the colouring $f:E \mapsto \{0,\dots,k-1\}$ this gives. Since $\sum_{v\in A} s_{(f,0)}(v)=\sum_{v\in B} s_{(f,0)}(v)=\sum_{e\in E}f(e)$, the induced sum at $v_0$ is also divisible by $k$.

In each case, we have coloured all but at most $|V_0|$ of the edges of $E_G(V_0,V)$ as evenly as possible with $\{0,\dots,k-1\}$, and then coloured the remainder in some specified way. Hence each of the colours $\{0,\dots,k-1\}$ is used on at most $|E_G(V_0,V)|/k+|V_0|$ edges.
\end{proof}

Now we are in a position to prove a lemma which allows us to label $E_G(V_0,V)$ so that the sums at vertices in $V_0$ are distinct, and all are divisible by some integer $k$.
\begin{lemma}\label{lem_V0_labelling}
Let $\delta$ and $k$ be odd positive integers, and let $G=(V,E)$ be a graph with no isolated edges and at most one isolated vertex, with $\delta$-core $V_1$ and $V_0=V\setminus V_1$. Let $L$ be an interval of $\mathbb N$ of length at least $(\delta-1+3k)|V_0|$. Then there is an injective labelling $f: E(V_0,V)\mapsto L$ such that the sum $s_{(f,0)}(v)$ is divisible by $k$ for each $v\in V_0$, and $s_{(f,0)}(v_1)\neq s_{(f,0)}(v_2)$ for distinct vertices $v_1$ and $v_2$ in $V_0$.
\end{lemma}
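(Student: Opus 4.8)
The plan is to decouple the two requirements: first fix all residues modulo $k$ so that every $V_0$-sum is divisible by $k$, and only then lift the resulting colours to genuine labels in $L$ in a way that pulls the sums apart. Applying Lemma~\ref{lem_V0_0} to $G$ with the partition $V_0\cup V_1$ produces a colouring $c\colon E_G(V_0,V)\mapsto\{0,\dots,k-1\}$ with $s_{(c,0)}(v)$ divisible by $k$ for every $v\in V_0$, and with each colour used at most $|E_G(V_0,V)|/k+|V_0|$ times. Because $V_1$ is the $\delta$-core of $G$, the edge bound recorded when the $r$-core was introduced gives $|E_G(V_0,V)|\le(\delta-1)|V_0|$. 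Throughout I will only ever choose $f(e)$ to be a label of $L$ congruent to $c(e)$ modulo $k$; this keeps every $s_{(f,0)}(v)$ divisible by $k$, and, since $L$ is an interval, each residue class modulo $k$ contains at least $|L|/k-1\ge(\delta-1)|V_0|/k+3|V_0|-1$ labels, which is at least $2|V_0|-1$ more than the number of edges of that colour. This surplus of roughly $2|V_0|$ labels per class is exactly what I will spend to separate the sums.

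For the separation, fix a spanning forest $F$ of the graph $(V,E_G(V_0,V))$, choosing it so that every tree which can reach $V_1$ is rooted at a vertex of $V_1$. I first assign labels of $L$ to all edges outside $F$, injectively and respecting the residues $c(e)$, and then process the $V_0$-vertices that are not roots of $F$ in order from the leaves towards the roots. When such a vertex $v$ is reached, its parent edge $e$ is the only unlabelled edge incident with $v$, and the other endpoint of $e$ lies either in $V_1$ (whose sum is not our concern here) or is a $V_0$-vertex not yet processed; hence I may let $f(e)$ range over the available labels congruent to $c(e)$ and thereby slide $s_{(f,0)}(v)$ along an arithmetic progression of step $k$, choosing it to avoid the at most $|V_0|-1$ sums already fixed. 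Since at most $|V_0|-1$ of the at least $2|V_0|-1$ spare labels in that class are blocked, a legal choice always remains, and all non-root $V_0$-sums become distinct multiples of $k$.

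The real difficulty, and the point at which the hypotheses on $G$ are used, is the roots. A tree of $F$ fails to contain a vertex of $V_1$ exactly when it sits inside a connected component $K$ of $G$ lying entirely in $V_0$; such a $K$ has one fewer tree edge than it has vertices, so the greedy above leaves the root's sum beyond our control. Here the hypotheses rescue us: the single permitted isolated vertex has sum $0$, which I keep distinct by ensuring every other sum is positive, and --- as $G$ contains no isolated edge --- every other $V_0$-only component is connected on at least three vertices. For such a $K$ with root $r$ I fix every vertex except $r$ as above, arranging the order so that the last processed vertex is a child $u$ of $r$. At the moment $f(ur)$ is chosen, both $s_{(f,0)}(u)=f(ur)+C_u$ and $s_{(f,0)}(r)=f(ur)+C_r$ are determined by it, where $C_u$ and $C_r$ collect the already-fixed edges at $u$ and $r$. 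Their difference $C_u-C_r$ is a fixed constant, and having at least three vertices in $K$ lets me guarantee $C_u-C_r\neq 0$ during the earlier steps; granting this, each forbidden value rules out at most two labels for $f(ur)$, so the surplus still leaves a valid choice separating both $s_{(f,0)}(u)$ and $s_{(f,0)}(r)$ from every sum placed before.

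I expect the handling of these $V_0$-only components --- guaranteeing $C_u\neq C_r$ and checking that the label budget is not overspent once all components draw from the common interval $L$ --- to be the main obstacle; the $V_1$-anchored components are dispatched by the straightforward greedy of the second paragraph.
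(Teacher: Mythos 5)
Your overall architecture matches the paper's first step exactly --- both proofs begin by invoking Lemma~\ref{lem_V0_0} to fix all residues modulo $k$, bound $|E_G(V_0,V)|$ by $(\delta-1)|V_0|$ via the core property, and then spend the surplus of roughly $2|V_0|$ labels per residue class on separating the sums. But the separation phase is genuinely different. The paper does \emph{not} use a spanning forest or a leaf-to-root order: it steps through the edges of $E_G(V_0,V)$ in an \emph{arbitrary} order and, when labelling $e_i$, forbids any label that would make the current partial sum of an endpoint $v\in V_0\cap e_i$ collide with the current partial sum of any $V_0$-vertex \emph{not} in $e_i$. Distinctness then follows because for any pair $v_1\neq v_2$ in $V_0$ there is a last edge incident with exactly one of them (this is precisely where ``no isolated edge, at most one isolated vertex'' is used), that edge separates them, and every later edge meets both or neither, preserving the separation. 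This symmetric, order-free condition is what lets the paper avoid your ``root problem'' entirely: no vertex is ever left with an uncontrollable final sum.

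Your route can be made to work, but as written it has one unclosed step, which you yourself flag: the guarantee that $C_u\neq C_r$ for the root $r$ of a $V_0$-only component and its last child $u$. ``Having at least three vertices'' does not by itself produce this; you must actively impose it, and the natural place to do so is when labelling the \emph{last} edge, other than $ur$, that is incident with $u$ or $r$. At that moment all other contributions to $C_u$ and $C_r$ are determined, the constraint forbids exactly one additional label, and your budget of $2|V_0|-1$ spare labels per class absorbs it. (One must also check that this last edge is a tree edge rather than one of the non-forest edges you labelled unconstrained at the outset; it is, because if all tree edges at $u$ and $r$ other than $ur$ preceded the non-forest edges there would be none, forcing the tree --- and hence the component --- to have only the two vertices $u$ and $r$, contradicting the three-vertex lower bound.) With that addendum your argument is complete; the remaining counts ($2(|V_0|-2)$ forbidden values against a surplus of $2|V_0|-1$) are fine. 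Still, you should note that the paper's arbitrary-order greedy achieves the same end with strictly less machinery, and in particular never needs to distinguish roots from non-roots.
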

\begin{proof}
By Lemma \ref{lem_V0_0} we can choose a colouring $f_k: E_G(V_0,V)\mapsto \{0,\dots,k-1\}$ such that every vertex $v\in V_0$ has $s_{(f_k,0)}(v)$ divisible by $k$, and each colour is used at most $|E_G(V_0,V)|/k+|V_0|$ times. Now, we label $E_G(V_0,V)$ with labels from $L$, stepping through the edges in any order. Let $E_G(V_0,V)=\{e_1,\dots,e_r\}$. For $0\le i \le r$ we define a labelling $f^i: \{e_1,\dots,e_i\}\mapsto L$, by setting $f^i=f^{i-1}$ on $\{e_1,\dots,e_{i-1}\}$, and setting $f^i(e_i)=l$ for some label $l$ that obeys the following conditions:
\begin{enumerate}
\item $l$ is in $L$, and $l\equiv f_k(e_i)\pmod{k}$,
\item $l$ is not in the image of $f^{i-1}$,
\item \label{item_no_clash} if $v\in V_0$ and $v\in e_i$, $s_{(f^{i-1},0)}(v)+l \neq s_{(f^{i-1},0)}(v')$ for any $v' \in V_0$ with $v'\notin e_i$.
\end{enumerate}
We claim that there is always a label which obeys these restrictions. Indeed, for $0\le i \le k-1$ let $L_i$ be the set of labels in $L$ which are congruent to $i$ modulo $k$. We wish to label $e_i$ with a label in $L_{f_k(e_i)}$. Since $f_k$ uses each colour at most $|E_G(V_0,V)|/k+|V_0|$ times, the second condition rules out at most $|E_G(V_0,V)|/k+|V_0| - 1$ labels in $L_{f_k(e_i)}$. The third condition applies to at most $2$ distinct vertices $v$, and for each rules out at most $|V_0|-2$ labels. Hence the total number of labels in $L_{f_k(e_i)}$ which violate one of these two conditions is at most
\[
 |E_G(V_0,V)|/k+|V_0| -1 + 2(|V_0|-2) < |E_G(V_0,V)|/k+3|V_0|-1.
\]
On the other hand, since $V_0$ is the complement of the $\delta$-core of $G$, $|E_G(V_0,V)|\le (\delta-1)|V_0|$. Hence $|L|\ge |E_G(V_0,V)|+3k|V_0|$, and so $|L_{f_k(e)}|\ge |E_G(V_0,V)|/k+3|V_0|-1$. So there is some label $l$ which we can use at $e$.

Let the labelling this process gives be $f=f^r$. Since $f$ agrees with $f_k$ modulo $k$, every sum $s_{(f,0)}(v)$ is divisible by $k$ for $v\in V_0$. For $v_1\neq v_2$ vertices of $V_0$, let $e_j$ be the last edge incident with exactly one of $v_1$ and $v_2$ to be labelled; such an edge exists since $G$ has at most one isolated vertex and no isolated edges. When $e_j$ is labelled, Condition \ref{item_no_clash} on $f^j(e_j)$ guarantees $s_{(f^j,0)}(v_1)\neq s_{(f^j,0)}(v_2)$, and hence $s_{(f,0)}(v_1)\neq s_{(f,0)}(v_2)$. \end{proof}

This enables us to give a lemma which is sufficient for graphs with large average degree to be antimagic; Sections \ref{sec_min_to_form} and \ref{sec_form_to_anti} will be devoted to the proof of this lemma. Given a graph $G=(V,E)$ and a function $g:V\mapsto \mathbb N$, we call a colouring $f:E\mapsto \mathbb N$ \emph{$g$-antimagic} if $s_{(f,g)}(v_1)\neq s_{(f,g)}(v_2)$ for distinct vertices $v_1$ and $v_2$ in $V$.

\begin{lemma}\label{lem_min_suffices}
 Let $k_1$ and $k_2$ be sufficiently large odd coprime integers. Then there are constants $c=c(k_1,k_2)$ and $\delta= \delta(k_1,k_2)$ such that if $G=(V,E)$ is a graph with minimum degree at least $\delta$, $L$ is a set of integers of size $|E|$ containing $\{1,\dots,c|V|\}$, and $g$ is a function $g:V\mapsto \mathbb N$, then there exists a $g$-antimagic bijective labelling $f:E\mapsto L$ such that no vertex in $V$ has induced sum $s_{(f,g)}(v)$ divisible by $k_1k_2$.
\end{lemma}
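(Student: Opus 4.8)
The plan is to prove the lemma in the two stages announced for Sections \ref{sec_min_to_form} and \ref{sec_form_to_anti}: first put $G$ into a convenient structural \emph{form}, and then label a graph in that form. Throughout, $k_1$ and $k_2$ are the fixed large coprime odd integers, and I would fix $\delta$ and $c$ at the end to be large enough functions of $k_1,k_2$. For the form, note that since $G$ has minimum degree at least $\delta$ its average degree is at least $\delta$, so $|E|\ge \tfrac{\delta}{2}|V|$. Taking $\delta \ge 2\,m'(k_1k_2+1,\,k_1+1)$ and applying Corollary \ref{Cor_good_partition} with $a_1=k_1k_2+1$ and $a_2=k_1+1$, I would obtain a vertex partition $V=V_1\cup V_2$ with $|E_G(V_1)|\ge (k_1k_2+1)|V|$ and $|E_G(V_2)|\ge (k_1+1)|V|$, which are exactly the edge-density hypotheses needed to run Lemma \ref{lem_k12colour}. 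Using the large minimum degree further, via the total-domination bound of Theorem \ref{thm_total_dom} and the equitable colouring of Lemma \ref{lem_all_colours}, I would equip each vertex with a reservoir of \emph{adjustment edges}, spare incident edges whose labels I am free to exchange later, and isolate a small control set $A$ meeting the hypotheses of Lemma \ref{lem_AB_colour} (each vertex of $A$ with at least two edges into $B=V\setminus A$, and $A$ dominating $B$).

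Next comes the modular control, which is the easy half. I would apply Lemma \ref{lem_k12colour} to produce a bijective labelling $f_0:E\mapsto L$ for which every $V_1$-sum avoids $0,1\pmod{k_1}$ and every $V_2$-sum avoids $0,1\pmod{k_2}$, with at most about $|V_i|/(k_i-3)$ vertices in any one residue class. Choosing $c\ge k_1(k_2+1)$ makes $\{1,\dots,c|V|\}\subseteq L$ satisfy the label-density hypotheses of that lemma. The non-divisibility requirement then comes for free: since $k_1$ and $k_2$ are coprime, any multiple of $k_1k_2$ is in particular $\equiv 0\pmod{k_1}$ and $\equiv 0\pmod{k_2}$, whereas each $V_1$-sum is $\not\equiv 0\pmod{k_1}$ and each $V_2$-sum is $\not\equiv 0\pmod{k_2}$, so no induced sum can be divisible by $k_1k_2$ provided the final labelling preserves these residues.

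The crux is the distinctness. After the modular stage the residues are controlled but each class still contains up to $\Theta(|V|/k_i)$ vertices, so the integer sums are far from distinct. To separate them I would aim at pairwise-disjoint magnitude intervals: fix $M\gg k_1k_2$, order the vertices $v_1,\dots,v_n$, and push $s_{(f,g)}(v_j)$ into $[(j-1)M,\,jM)$ while keeping its residue. Each correction would be made by swapping one adjustment-edge label of $v_j$ for an unused label of $\{1,\dots,c|V|\}$ in the same class modulo $k_1k_2$; such a swap shifts the sum by a multiple of $k_1k_2$, preserving every residue (and hence the non-divisibility just secured) and keeping $f$ a bijection onto $L$. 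The density of labels in each class provides enough distinct shift magnitudes once $c$ is large in terms of $k_1,k_2$. The genuine difficulty, and the reason the control set $A$ and Lemma \ref{lem_AB_colour} are needed, is that every adjustment edge has a second endpoint, so correcting one vertex perturbs another. The form from the first stage is designed to break this coupling: adjustment edges are routed into $A$, whose sums are set \emph{last}, using the two reserved $A$-to-$B$ edges per vertex of $A$ and the exact-target freedom of Lemma \ref{lem_AB_colour} to land each $A$-sum in its own interval with a permitted residue.

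The main obstacle is precisely this last stage. Controlling the sums modulo $k_1k_2$, and thereby obtaining both the spreading and the non-divisibility, is essentially handed to us by Lemmas \ref{lem_k12colour} and \ref{lem_AB_colour}, much as in the greedy argument of Lemma \ref{lem_V0_labelling}. By contrast, forcing the $|V|$ actual integer sums to be pairwise distinct while (i) keeping $f$ bijective onto the prescribed set $L$, (ii) preserving the modular properties already secured, and (iii) managing the unavoidable interaction between the two endpoints of every re-labelled edge, is what requires the bespoke decomposition of Section \ref{sec_min_to_form} together with the careful label-exchange arguments of Section \ref{sec_form_to_anti}. This coupling between endpoints is also why the proof must be split across two sections rather than following at once from the modular lemmas.
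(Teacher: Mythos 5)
Your modular half is sound and close in spirit to the paper's: a partition with many edges in each part via Corollary \ref{Cor_good_partition}, then Lemma \ref{lem_k12colour} to push sums away from $0,1$ modulo $k_1$ on one side and modulo $k_2$ on the other while spreading them over residue classes; and your observation that avoiding $0\pmod{k_1}$ on one part and $0\pmod{k_2}$ on the other already rules out divisibility by $k_1k_2$ is correct. The genuine gap is in the distinctness stage. You propose to push $s_{(f,g)}(v_j)$ into its own magnitude window $[(j-1)M,jM)$ by exchanging one adjustment-edge label per vertex. This cannot work under the hypotheses of the lemma: $L$ is an arbitrary set of integers of which only $\{1,\dots,c|V|\}$ is prescribed (the remaining $|E|-c|V|$ labels may be astronomically large), $g$ is arbitrary, and each vertex sum is a sum of at least $\delta$ labels almost all of which are outside your control; the base sum at a vertex therefore bears no relation to your target interval, and a single residue-preserving swap cannot place it there. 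Moreover, every swap perturbs the sums at the other endpoints of both edges involved, and your answer --- route all adjustment edges into a control set $A$ fixed last via Lemma \ref{lem_AB_colour} --- only controls the $A$-sums modulo $k_1k_2$; it provides no mechanism for making the $A$-sums pairwise distinct as integers.

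The paper's resolution of exactly this difficulty is the piece missing from your outline: a star-forest decomposition (Lemmas \ref{lem_stars} and \ref{lem_partition}). Distinctness is obtained not by interval placement but by (i) greedily labelling a spanning forest of the non-star vertices with multiples of $k_1k_2$, where the modular spread already secured guarantees each edge has only about $|V|/\min(k_1,k_2)$ potential conflicts while the large reservoir of still-unlabelled star edges guarantees enough free labels remain --- this is precisely where the lower bound on $\delta$ enters, via \eqref{eq_for_delta}; (ii) labelling the stars last by Lemma \ref{lem_star_label}, which makes the centres' sums distinct for an arbitrary residual label set; and (iii) for the star leaves, controlling their partial sums modulo $k_1k_2 n$ rather than merely modulo $k_1k_2$, so that the single remaining star edge at each leaf, carrying a distinct label from $[1,k_1k_2n]$, automatically separates them. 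Without a device of this kind, compatible with an arbitrary label set $L$ and an arbitrary offset $g$, your argument does not close.
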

In fact, the truth of this lemma for a single pair of integers $k_1$ and $k_2$ is sufficent to prove Theorem \ref{thm_main}. For integers $a$ and $b$, we define $[a,b]$ to be the set $\{n\in \mathbb N: a\le n \le b\}$.
\subsection{Proof of Theorem \ref{thm_main} from Lemma \ref{lem_min_suffices}}
Suppose that Lemma \ref{lem_min_suffices} holds for some $k_1$ and $k_2$, with constants $c=c(k_1, k_2)$ and $\delta = \delta(k_1,k_2)$. Then we claim that Theorem \ref{thm_main} holds for
\begin{equation}\label{eq_d_suff}
d_0=2\max(c,\delta -1 + 3k_1k_2).
\end{equation}
Indeed, given a graph $G$ with average degree at least $d_0$, let $V_1$ be the $\delta$-core of $G$ and $V_0 = V\setminus V_1$. Now, apply Lemma \ref{lem_V0_labelling} to the graph $G$, with $k=k_1k_2$ and the label set $L'=\left[|E|-(\delta-1+3k_1k_1)|V_0|+1,|E|\right]$. This gives us an injective labelling $f_1:E(V_0,V)\mapsto L'$, so that $s_{(G,f_1,0)}(v) \equiv 0 \pmod {k_1k_2}$ for each $v\in V_0$, and $s_{(G,f_1,0)}(v_1)\neq s_{(G,f_1,0)}(v_2)$ for $v_1$ and $v_2$ distinct vertices in $V_0$. We define $L=[1,|E|]\setminus f_1(E(V_0,V))$; so certainly $[1,nd_0/2-(\delta-1+3k_1k_2)|V_0|]\subseteq L$. Also, note that
\begin{align*}
 nd_0/2 &\ge n\max(\delta-1+3k_1k_2,c)\\
&\ge (\delta-1+3k_1k_2)|V_0| + c|V_1|.
\end{align*}
Hence $L$ contains $[1,c|V_1|]$, and we can apply Lemma \ref{lem_min_suffices} to the integers $k_1$ and $k_2$, the graph $G'=(V_1,E_G(V_1))$ and the label set $L$. The function $g$ we use is $g(v)=s_{(G,f_1,0)}(v)$ for $v\in V_1$. So from the conclusion of Lemma \ref{lem_min_suffices}, there exists a $g$-antimagic bijective labelling $f_2: E_G(V_1)\mapsto L$, so that no vertex in $V_1$ has $s_{(G',f_2,g)}(v)$ divisible by $k_1k_2$. We define the labelling $f:E\mapsto [1,|E|]$ to be equal to $f_1$ on $E_G(V_0,V)$ and equal to $f_2$ on $E_G(V_1)$. Note that $f$ is a bijective labelling from $E$ to $[1,|E|]$; indeed, $f_1$ is bijective from $E(V_0,V)\mapsto f_1(E(V_0,V))$, and $f_2$ is bijective from $E_G(V_1)\mapsto L = [1,|E|]\setminus f_1(E(V_0,V))$.

Now, for $v\in V_1$ we have 
\[
s_{(G,f,0)}(v)= s_{(G',f_2,0)}(v)+s_{(G,f_1,0)}(v)=s_{(G',f_2,g)}(v),
\]
so the sums $s_{(G,f,0)}(v)$ for $v\in V_1$ are distinct and not divisible by $k_1k_2$. Also, $v\in V_0$ we have $s_{(G,f,0)}(v) = s_{(G,f_1,0)}(v)$, as $f_2$ labels no edge incident with $v$, so the sums $s_{(G,f,0)}(v)$ for $v\in V_0$ are distinct and divisible by $k_1k_2$. Hence $s_{(G,f,0)}(v_1)\neq s_{(G,f,0)}(v_2)$ for $v_1$ and $v_2$ distinct vertices in $V$, and so $G$ is antimagic.

\section{A partition of a graph with large minimum degree}\label{sec_min_to_form}
Now we shall begin the proof of Lemma \ref{lem_min_suffices}. Given a graph $G=(V,E)$ with large minimum degree, Lemma \ref{lem_stars} shows that we can pick some vertex disjoint stars in $G$ with a large edge set, such that removing these stars leaves many edges in $G$. We use this to prove Lemma \ref{lem_partition}, which guarantees we can partition $V$ and $E$ into a certain form. In Section \ref{sec_form_to_anti}, we shall show that a graph in this form satisfies the conclusions of Lemma \ref{lem_min_suffices}.

We define a \emph{star} to be a graph $S=(V,E)$ on at least $2$ vertices with a distinguished vertex $c$ such that $E=\{cv: v \in V\setminus \{c\}\}$. The vertex $c$ is called the \emph{centre} of $S$. A \emph{star forest} is just  a collection of vertex-disjoint stars. Also, recall that in Subsection \ref{subsec_dominating}, $z(k,\delta)$ was defined as the smallest real number $s$ such that any graph with minimum degree $\delta$ and $n$ vertices has a set of at most $sn$ vertices with at least $k$ edges to every vertex of $G$.

\begin{lemma}\label{lem_stars}
Let $\delta$, $n$ and $r$ be positive integers with $r\le n\delta/2$ and $\delta \ge 5$. Let $G=(V,E)$ be a graph with minimum degree at least $\delta$ and $|V|=n$. Then there exists a star forest $F_S\subseteq G$ such that the following hold:
\begin{enumerate}
\item \label{Con1_V0_r_edges}$|E_G(V\setminus V(F_S))|\ge r$,
\item \label{Con1_star_edges}$|E(F_S)| \ge n(1/2-z(5,\delta)-2/\delta) - 1 - r/\delta$,
\item \label{Con1_centre_set}There is a set $V_1$ consisting of some of the centres of the stars in $F_S$, such that:
\begin{itemize}
\item Every vertex in $V_1$ has at least $5$ edges to $V\setminus V(F_S)$.
\item Every vertex in $G$ has at least $5$ edges to $(V\setminus V(F_S)) \cup V_1$.
\end{itemize}
\end{enumerate}
\end{lemma}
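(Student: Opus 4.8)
The plan is to fix the leftover set $V_0=V\setminus V(F_S)$ first, and then take $F_S$ to be a spanning star forest of $G[V\setminus V_0]$ using as few stars as possible. The key identity is $|E(F_S)|=|V(F_S)|-(\text{number of stars})$, since a star on $c+1$ vertices contributes exactly $c$ edges. If $G[V\setminus V_0]$ has no isolated vertices, then a maximal matching of it, together with hanging each remaining vertex off a matched neighbour, yields a spanning star forest with at most $\lfloor (n-|V_0|)/2\rfloor$ stars, so $|E(F_S)|\ge (n-|V_0|)/2$; this is the origin of the coefficient $1/2$. Consequently Condition~\ref{Con1_star_edges} will follow as soon as I can keep $|V_0|\le 2z(5,\delta)n+4n/\delta+2+2r/\delta$, since halving this bound reproduces the claimed expression. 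The whole lemma thus reduces to choosing $V_0$ so that $G[V\setminus V_0]$ is isolate-free, $e_G(V_0)\ge r$, the domination Condition~\ref{Con1_centre_set} holds, and $|V_0|$ meets this budget.

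For domination I would begin with a total $5$-dominating set $D$ with $|D|\le z(5,\delta)n$, as supplied by the definition of $z(5,\delta)$ (and bounded through Theorem~\ref{thm_total_dom}). The useful principle is that as long as $D\subseteq V_0\cup V_1$, the second bullet of Condition~\ref{Con1_centre_set} is immediate, because every vertex has at least five neighbours in $D\subseteq V_0\cup V_1$; and the first bullet, that each centre in $V_1$ has at least five edges to $V_0$, will hold provided each such centre keeps at least five of its dominating $D$-neighbours in $V_0$. In the easiest case one keeps all of $D$ inside $V_0$ and takes $V_1=\emptyset$, but $V_1$ becomes necessary exactly when covering the complement forces some vertices of $D$ to serve as star centres.

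To secure $e_G(V_0)\ge r$ I would additionally place in $V_0$ a small set carrying at least $r$ edges; since $G$ has at least $n\delta/2\ge r$ edges, a greedy or densest-subgraph argument produces such a set on $O(r/\delta)$ vertices (this is where the $r/\delta$ term enters). The step I expect to be the main obstacle is reconciling this dense leftover with isolate-freeness of $G[V\setminus V_0]$: cramming $D$ and a dense chunk into $V_0$ may strand a vertex $v\in V\setminus V_0$ all of whose neighbours lie in $V_0$, and such a $v$ cannot belong to any star. My remedy is an exchange move: promote one neighbour $u\in V_0$ of $v$ back into the forest as the centre of the star $uv$, and if $u\in D$ place $u$ in $V_1$. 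Because $D$ stays inside $V_0\cup V_1$ and $u$ retains five dominating $D$-neighbours in $V_0$, both parts of Condition~\ref{Con1_centre_set} survive the move, while $|E(F_S)|$ only grows.

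The cleanest way to run all of this simultaneously is extremally: among all configurations $(F_S,V_1)$ with $D\subseteq V_0\cup V_1$, with $e_G(V_0)\ge r$, and with every vertex of $V_1$ having at least five edges to $V_0$, choose one maximising $|E(F_S)|$. The configuration $V_0=V$, $F_S=V_1=\emptyset$ lies in this family (here one uses $\delta\ge 5$ and $r\le n\delta/2$), so the family is non-empty. At the maximiser no exchange move of the above type can increase $|E(F_S)|$, and translating this optimality into a bound on $|V_0|$ gives the budget, with the dominating set $D$ accounting for the $z(5,\delta)n$ term, the vertices that must remain in $V_0$ to keep $e_G(V_0)\ge r$ accounting for the $r/\delta$ term, and a bounded number of exceptional vertices --- forced by the isolated-vertex repair and by the parity of the matching --- accounting for the $2/\delta$ and the $-1$. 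The real difficulty is that the three demands, namely a dense leftover, a star-coverable complement, and domination with the five-edges-to-$V_0$ property, pull against one another under a tight budget on $|V_0|$, and it is the total $5$-domination bound together with the exchange argument that lets all three hold at once.
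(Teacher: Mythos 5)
Your proposal has a genuine gap at its core: the two requirements you impose on $V_0=V\setminus V(F_S)$ --- that $e_G(V_0)\ge r$ and that $|V_0|\le 2z(5,\delta)n+4n/\delta+2+2r/\delta$ --- are incompatible in general. The claim that ``a greedy or densest-subgraph argument produces a set on $O(r/\delta)$ vertices carrying $r$ edges'' is false. Take $G$ to be a $\delta$-regular Ramanujan graph and $r=\alpha\delta n$ with $\alpha$ a small constant (this is exactly the regime of the application in Section \ref{sec_form_to_anti}, where $r=\Theta(n)$ with a large constant and $\delta$ is a larger constant, and $r\le n\delta/2$ permits it). By the expander mixing lemma any $m$-set spans at most $\frac{\delta m^2}{2n}+\sqrt{\delta}\,m$ edges, so spanning $r$ edges forces $m\gtrsim\sqrt{2\alpha}\,n$, whereas your budget allows only about $(2\alpha+2z(5,\delta)+4/\delta)n\approx 2\alpha n$ vertices; for small $\alpha$ we have $2\alpha\ll\sqrt{2\alpha}$ and no admissible $V_0$ exists. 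Since your entire derivation of Condition \ref{Con1_star_edges} rests on $|E(F_S)|\ge(n-|V_0|)/2$ combined with this budget, the approach cannot be repaired by tweaking constants: $V_0$ genuinely must sometimes occupy a constant fraction of $V$ far exceeding $2r/\delta$.

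The paper obtains the coefficient $1/2$ by a different count that never requires $|V_0|$ to be small. After a preprocessing step deleting edges between two vertices of degree exceeding $\delta$ (so that every star leaf has degree at most $\delta$), it builds the stars greedily and charges each edge of $F_S$ for at most $\delta$ edges of $G$ destroyed when its leaf is removed; stopping once fewer than $r+n+\delta$ edges remain yields $\delta|E(F_S)|\ge|E|-r-O(n)-\delta|D_s|\ge n\delta/2-r-O(n)-\delta z(5,\delta)n$, which is Condition \ref{Con1_star_edges} after dividing by $\delta$. Note the $1/2$ comes from $|E|\ge n\delta/2$, not from pairing up vertices of $V(F_S)$. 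This preprocessing also supplies the mechanism you are missing for the first bullet of Condition \ref{Con1_centre_set}: the set $V_1$ is forced inside the independent set of high-degree vertices, so the five dominating neighbours of any $v\in V_1$ automatically lie outside $V(F_S)$; your exchange move does not by itself prevent several adjacent members of $D$ from being promoted and stealing each other's required edges to $V_0$.
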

\begin{proof}
Note that if the lemma holds for a graph $G$, and $G$ is a subgraph of a graph $G'$ on the same vertex set, the lemma also holds for $G'$; indeed, any choice of $F_S$ and $V_1$ which verifies the lemma for $G$ also do so for $G'$. So it is sufficient to prove the lemma for graphs $G$ with minimum degree $\delta$ and no edges $v_1v_2$ for which $v_1$ and $v_2$ have degree greater than $\delta$, since every graph with minimum degree at least $\delta$ has a subgraph satisfying this condition; indeed, such a subgraph can be obtained by successively removing edges between two vertices of degree greater than $\delta$.

Given a graph $G=(V,E)$ of this form, let $V_s$ be the set of vertices of degree $\delta$, and let $V_b= V\setminus V_s$. Note that $E_G(V_b)=\emptyset$. Now, let $D$ be a smallest set of vertices in $G$ with $|N_G(V)\cap D|\ge 5$ for each $v\in V$; since $\delta \ge 5$, $D$ certainly exists. By the definition of $z(k,l)$, $|D|\le z(5,\delta)n$. Let $D_s = D\cap V_s$. We now give an algorithm for choosing our star forest $F_S$, as follows: 
\begin{enumerate}
\item $V(0)=V\setminus D_s$.
\item\label{item_V(t)} Given $V(t)$, let $G(t)$ be the graph $(V(t), E_G(V(t)))$, the graph induced by $G$ on vertex set $V(t)$. Let $V_s(t)= V(t)\cap V_s$, and $V_b(t)=V(t)\cap V_b$.
\item If $|E_G(V(t))| < r + n + \delta$, we terminate the algorithm, setting $F_S$ to be the stars $\{S_1, \dots, S_t\}$ picked so far.
\item Otherwise, we pick a star $S_{t+1}\subseteq G(t)$ with centre $c_{t+1}$ to add to our star forest. To pick the centre $c_{t+1}$ of the star, if there is an edge $v_1v_2\in E_G(V(t))$ with $v_1\in V_s(t)$ and $v_2\in V_b(t)$, we choose any such edge and let $c_{t+1}=v_2$. Otherwise, since $E_G(V_b)=\emptyset$, $E(G(t))$ must contain an edge $v_1v_2$ with both $v_1$ and $v_2$ in $V_s(t)$; then we choose any such edge and let $c_{t+1}=v_1$.
\item\label{item_nhd} To pick the vertex set $A_{t+1}$ for $S_{t+1}$, let $N=N_{G(t)}(c_{t+1})$ be the neighbourhood of $c_{t+1}$ in $G(t)$, and write $N=\{n_1,\dots,n_l\}$. Letting $l'$ be maximal such that $|E_G(V(t)\setminus \{c_{t+1}, n_1,\dots,n_{l'}\})|\ge r$, we set $A_{t+1}=\{c_{t+1}\}\cup \{n_1,\dots,n_{l'}\}$.
\item We set $V(t+1)=V(t)\setminus A_{t+1}$, and go to Step \ref{item_V(t)}.
\end{enumerate}

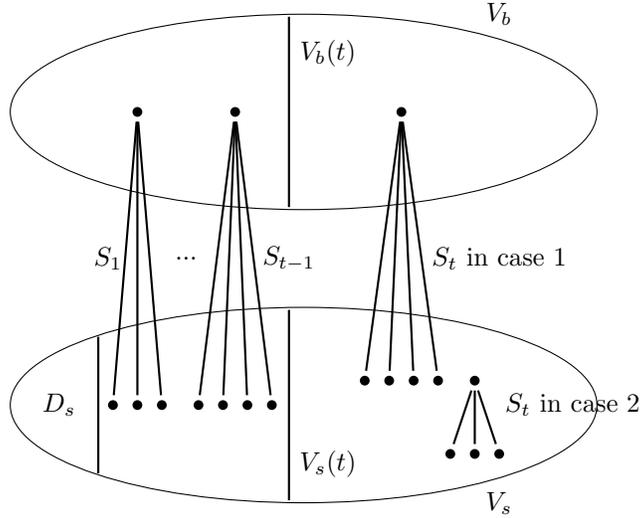
\begin{figure}[ht]
\centering{
\begin{tikzpicture}[scale=0.65,thick]
  \begin{pgfonlayer}{nodelayer}
   \node (a) at (4,9){};
   \node (b) at (4,3){};
   \node (c) at (4.5,10.2){\(V_b(t)\)};
   \node (d) at (4.5,1.8){\(V_s(t)\)};
   \node (e) at (8,11){\(V_b\)};
   \node (f) at (8,1){\(V_s\)};
   \node (g) at (0,6){\(S_1\)};
   \node (h) at (1.6,6){...};
   \node (i) at (3.7,6){\(S_{t-1}\)};
   \node (j) at (-0.2,4.6){};
   \node (k) at (-0.2,1.4){};
   \node (l) at (-1,3){\(D_s\)};
   \node (m) at (3.7,11.15){};
   \node (n) at (3.7,6.85){};
   \node (o) at (3.7,5.15){};
   \node (p) at (3.7,0.85){};
   \node (q) at (8,6){\(S_t\) in case 1};
   \node (r) at (9.5,3){\(S_t\) in case 2};
   \node (c_0) at (0.6,9){};
   \node (c_1) at (2.6,9){};
   \node (c_2) at (6,9){};
   \node (c_3) at (7.5,3.5){};
   
   \foreach \x in {0,...,2}{\node (l0_\x) at ($(0.1,3)+(0.5*\x,0)$){};}
   \foreach \x in {0,...,3}{\node (l1_\x) at ($(1.85,3)+(0.5*\x,0)$){};}
   \foreach \x in {0,...,3}{\node (l2_\x) at ($(5.25,3.5)+(0.5*\x,0)$){};}
   \foreach \x in {0,...,2}{\node (l3_\x) at ($(7,2)+(0.5*\x,0)$){};}

   \begin{scope}
   \draw (a) ellipse (6cm and 2cm);
   \draw (b) ellipse (6cm and 2cm);
   \end{scope}
\end{pgfonlayer}

\foreach \x in {0,...,3}{\fill (c_\x) circle (0.1);}
\foreach \x in {0,...,2}{\fill (l0_\x) circle (0.1);}
\foreach \x in {0,...,3}{\fill (l1_\x) circle (0.1);}
\foreach \x in {0,...,3}{\fill (l2_\x) circle (0.1);}
\foreach \x in {0,...,2}{\fill (l3_\x) circle (0.1);}

\draw (j) to (k);
\draw (m) to (n);
\draw (o) to (p);
\foreach \x in {0,...,2}{\draw (l0_\x) to (c_0);}
\foreach \x in {0,...,3}{\draw (l1_\x) to (c_1);}
\foreach \x in {0,...,3}{\draw (l2_\x) to (c_2);}
\foreach \x in {0,...,2}{\draw (l3_\x) to (c_3);}

\end{tikzpicture}}
\caption{Picking the star $S_t$}
\label{fig_star_struct}
\end{figure}

This algorithm is illustrated by Figure \ref{fig_star_struct}. Now, let $F_S$ be the star forest defined by this algorithm, and let $V_1 = V(F_S) \cap V_b$. We claim that $F_S$ and $V_1$ satisfy the conclusions of the lemma. First, in Step \ref{item_nhd} of the algorithm note that since $E_G(V_b)=\emptyset$ we have $N\subseteq V_s$. So the only vertices of $F_S$ which can lie in $V_b$ are the centres of stars, and so we do indeed have $V_1$ being a subset of the centres of stars in $F_S$.

Also, since in Step \ref{item_nhd} of the algorithm we have $N\subseteq V_s$, all the vertices of $N$ have degree at most $\delta$ in $G(t)$. Thus for $1\le i \le l$ we have
\[
|E_G(V(t)\setminus \{c, n_1, \dots, n_i\})|\ge |E_G(V(t))|-n-i\delta,
\]
and in particular $|E_G(V(t))\setminus \{c, n_1\}|\ge r$, so $l\ge 1$ and $S_{t+1}$ has at least two vertices and is indeed a star. Also, if $V(S_{t+1})\neq \{c_{t+1}\}\cup N$, let $V(S_{t+1})=\{c_{t+1},v_1,\dots,v_{l'}\}$. Then the degree of $v_{l'+1}$ in $G(t)$ is at most $\delta$, and removing it from $G(t+1)$ reduces the number of edges in $G(t+1)$ to less than $r$, so we have $|E_G(V(t+1))| < r+\delta < r+\delta + n$. Hence we terminate the algorithm on the next run through, and $S_{t+1}$ is the last star chosen.

Now we check the first two conditions of the lemma. In choosing the star $S_{t+1}$ from $G(t)$, we ensure that $G(t+1)$ has at least $r$ edges, so Condition \ref{Con1_V0_r_edges} is satisfied. To show Condition \ref{Con1_star_edges} is satisfied, let the set of stars produced by the algorithm be $\{S_1, \dots, S_k\}$. Since all the vertices of $N_{G(t-1)}(c_t)$ have degree at most $\delta$ in $G$ and hence in $G(t-1)$, for $1\le t \le k$ we have
\begin{align*}
|E_G(V(t-1))|-|E_G(V(t))|&\le |N_{G(t-1)}(c_t)|+(\delta-1)(|V(S_t)|-1)\\
&< n + \delta |E(S_t)|.
\end{align*}
If $t\neq k$, we also have $V(S_t) = \{c_t\}\cup N_{G(t-1)}(c_t)$, and so instead we get
\[
|E_G(V(t-1))|-|E_G(V(t))|\le \delta |E(S_t)|.
\]
Hence we have
\begin{align*}
 r + n + \delta &\ge |E_G(V(k))|\\
&= |E_G(V(0))| - \sum_{i=1}^k \left(|E_G(V(t-1))|-|E_G(V(t))|\right)\\
&\ge |E|-\delta |D_s| - \sum_{i=1}^k \delta|E(S_i)| - n\\
&\ge n\delta/2 - \delta n z(5,\delta) - \delta\left| E(F_S)\right| - n.
\end{align*}
Rearranging this, we obtain
\[
 \left| E(F_S)\right| \ge n(1/2-z(5,\delta)-2/\delta) - 1 - r/\delta.
\]
This is the statement of the Condition \ref{Con1_star_edges} of the lemma. For the final condition, since $D \subseteq D_s \cup V_b \subseteq (V\setminus V(F_S))\cup V_1$, every vertex of $G$ has at least $5$ edges to $(V\setminus V(F_S))\cup V_1$, as required. In the case of a vertex in $V_1$, these must all be to $V\setminus V(F_S)$ -- indeed, $V_1\subseteq V_b$, and so $V_1$ has no internal edges.
\end{proof}

In the next lemma, we use the structure given by Lemma \ref{lem_stars} to find a more precise structure in a graph $G$ of high minimal degree. Recall that in Subsection \ref{subsec_graph_lemmas} $m(n,r_1,r_2)$ was defined as the least integer $r$ such that if a graph $G=(V,E)$ on $n$ vertices has at least $r$ edges, $V$ can be partitioned into subsets $V_1$ and $V_2$ such that $|E_G(V_1)|\ge r_1$ and $|E_G(V_2)|\ge r_2$. Note that if $n' \le n$ we have $m(n',r_1,r_2)\le m(n,r_1,r_2)$; indeed, if there exists a graph on $n'$ vertices with $r$ edges which shows that $m(n',r_1,r_2)>r$, the same graph together with $n-n'$ isolated vertices shows that $m(n,r_1,r_2)>r$.
\begin{lemma}\label{lem_partition}
Let $\delta$, $n$, $r$, $r_1$ and $r_2$ be positive integers such that $m(n,r_1,r_2)+n \le r\le \delta n/2$, and $\delta \ge 5$. Let $G=(V,E)$ be a graph on $n$ vertices with minimum degree at least $\delta$. Then there exists a star forest $F_S$, a vertex set $V_1$ consisting of some of the centres of stars in $F_S$, and a forest $F$ such that:
\begin{enumerate}
\item \label{Con_Struct} $F_S\subseteq F \subseteq G$, and $F$ is a spanning forest for $G$.
\item \label{Con_V0_total_edges}$|E_G(V\setminus V(F_S))|\ge r$,
\item \label{Con_star_edges} $\left| E(F_S) \right| \ge n(1/2-z(5,\delta)-2/\delta) - 1 - r/\delta$,
\item \label{Con_F_components} every component of $F$ which is not contained in $V(F_S)$ has size at least $3$,
\item \label{Con_only_star} for all vertices $v\in V(F_S)\setminus V_1$, $E_{F_S}(\{v\})=E_F(\{v\})$,
\item \label{Con_2_nonC}for all vertices $v\in V\setminus V_1$, there are at least $2$ edges from $v$ to $(V\setminus V(F_S)) \cup V_1$ which are not in $F$,
\item \label{Con_2_C}for all vertices $v\in V_1$, there are at least $2$ edges from $v$ to $V\setminus V(F_S)$ which are not in $F$,
\item \label{Con_V0_part}$V\setminus V(F_S)$ has a partition into sets $U_1$ and $U_2$ such the $E_G(U_1)\setminus E(F)\ge r_1$, and $E_G(U_2)\setminus E(F)\ge r_2$.
\end{enumerate}
\end{lemma}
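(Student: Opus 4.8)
The plan is to take $F_S$ and $V_1$ straight from Lemma~\ref{lem_stars}, to grow $F_S$ into the forest $F$ by adding edges lying only inside $V_0\cup V_1$ (where $V_0=V\setminus V(F_S)$), and finally to extract the partition of $V_0$ from the bound defining $m(n,r_1,r_2)$. Lemma~\ref{lem_stars} applies since $r\le \delta n/2$ and $\delta\ge 5$, and it produces a star forest $F_S$ and a set $V_1$ of some of its centres. Its Conditions~\ref{Con1_V0_r_edges} and \ref{Con1_star_edges} are verbatim Conditions~\ref{Con_V0_total_edges} and \ref{Con_star_edges} here, while Condition~\ref{Con1_centre_set} records exactly the two facts I shall use about $G$: every $v\in V_1$ has at least $5$ edges to $V_0$, and every $v\in V$ has at least $5$ edges to $V_0\cup V_1$.

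I would set $F=F_S\cup F_H$, where $F_H$ is a forest all of whose edges lie inside $V_0\cup V_1$. This choice is what makes several conditions automatic: a vertex of $V(F_S)\setminus V_1$ is incident to no edge of $V_0\cup V_1$, so it keeps exactly its star edges and Condition~\ref{Con_only_star} holds; and $F$ is acyclic, since every edge of $F_S$ has a leaf as an endpoint, a leaf has $F$-degree $1$, so no cycle can traverse a star edge, forcing any cycle into the forest $F_H$. Since every star and every vertex of $V(F_S)$ is already covered and $F_H$ will cover $V_0$, $F$ is a spanning forest, giving Condition~\ref{Con_Struct}.

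The substance is the construction of $F_H$, which must simultaneously achieve Condition~\ref{Con_F_components} (every component meeting $V_0$ has size $\ge 3$) and the free-edge Conditions~\ref{Con_2_nonC} and \ref{Con_2_C}. I build $F_H$ greedily: while some $V_0$-vertex lies in an $F$-component of size at most $2$, I add an edge of $G$ inside $V_0\cup V_1$ joining that component to a vertex outside it. Attaching a $V_0$-vertex to a centre $c\in V_1$ immediately yields a component of size $\ge 3$, because $c$'s star already carries a leaf; and a component with no $V_1$-vertex consists of $V_0$-vertices each of degree $\ge 5$ into $V_0\cup V_1$, so while it has at most $2$ vertices there is always an outside neighbour to grow into. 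The one thing needing care is not to overspend incidences. A vertex $v\in V_0$ meets $V_0\cup V_1$ in $\ge 5$ edges and its only $F$-edges are those of $F_H$, so Condition~\ref{Con_2_nonC} merely asks that its $F_H$-degree stay at most its degree minus $2$; likewise a centre $c\in V_1$ has $\ge 5$ edges to $V_0$ and its star edges avoid $V_0$, so Condition~\ref{Con_2_C} asks that the number of $F_H$-edges from $c$ into $V_0$ stay at most its $V_0$-degree minus $2$. Each such budget is at least $3$, and since an edge is added at a vertex only while it sits in a component of size $\le 2$, one can always select the outside endpoint to avoid the (few) already-saturated vertices; spreading the attachments over the $\ge 5$ available neighbours keeps every vertex within budget. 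For non-$V_1$ vertices of $V(F_S)$, Condition~\ref{Con_2_nonC} is immediate, since a leaf spends at most its single star edge and a non-$V_1$ centre spends none of its edges to $V_0\cup V_1$.

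Finally, Condition~\ref{Con_V0_part} follows from the probabilistic bound behind $m$. The restriction of $F$ to $V_0$ is a forest, hence has fewer than $|V_0|\le n$ edges, so by Condition~\ref{Con_V0_total_edges} the graph $G'=(V_0,\,E_G(V_0)\setminus E(F))$ has more than $r-n\ge m(n,r_1,r_2)\ge m(|V_0|,r_1,r_2)$ edges, using the monotonicity $m(|V_0|,r_1,r_2)\le m(n,r_1,r_2)$ noted before the lemma. By the definition of $m$, $V_0$ then splits as $U_1\cup U_2$ with at least $r_1$ edges of $G'$ inside $U_1$ and at least $r_2$ inside $U_2$; as these are precisely the edges counted by $E_G(U_i)\setminus E(F)$, Condition~\ref{Con_V0_part} holds. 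I expect the greedy forest construction of the third paragraph to be the main obstacle: reconciling the large-component requirement with the two free-edge requirements is where all the bookkeeping lies, although the gap between the degree-$5$ conclusions of Lemma~\ref{lem_stars} and the mere $2$ free edges demanded leaves ample room.
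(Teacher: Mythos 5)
Your overall architecture matches the paper's: invoke Lemma~\ref{lem_stars}, set $F=F_S\cup F_H$ with $F_H$ a forest inside $V_0\cup V_1$, and extract the partition of $V_0$ from the definition of $m$ after discarding the fewer than $n$ edges of $F$. Your treatment of Conditions~\ref{Con_Struct}, \ref{Con_V0_total_edges}, \ref{Con_star_edges}, \ref{Con_only_star} and \ref{Con_V0_part}, and the acyclicity argument via star leaves, are all sound and essentially what the paper does. The gap is in the construction of $F_H$ itself, i.e.\ in securing Conditions~\ref{Con_F_components}, \ref{Con_2_nonC} and \ref{Con_2_C} simultaneously. Your key assertion --- that when a small component must be attached, ``one can always select the outside endpoint to avoid the (few) already-saturated vertices'' --- is not justified and is false for the greedy as you describe it. The outside endpoint of an attachment need not lie in a small component, so a vertex can be chosen as an outside endpoint arbitrarily often by different small components; your remark that ``an edge is added at a vertex only while it sits in a component of size $\le 2$'' only controls the inside endpoints. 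Concretely, take a vertex $a$ with neighbourhood $\{u_1,\dots,u_5\}$, where each $u_i$ is additionally joined to four private vertices $w_{i,1},\dots,w_{i,4}$, and the twenty $w_{i,j}$ are made $5$-regular among themselves; all degrees equal $5$, so every budget is exactly $3$. A run of your greedy that processes the $w_{i,j}$ first can legitimately (each choice being unsaturated at the time it is made) drive every $u_i$ to $F_H$-degree $3$ before $\{a\}$ is processed, at which point every neighbour of $a$ is saturated and the procedure is stuck. The fact that saturated vertices are globally rare does not prevent one vertex's entire neighbourhood from being saturated, so the proof needs either a specific processing rule with an invariant guaranteeing a free neighbour always survives, or a different construction.

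The paper avoids this bookkeeping entirely with Lemma~\ref{lem_all_colours}: the graph $G_1$ on $V_0\cup V_1$ with edge set $E_G(V_0)\cup E_G(V_0,V_1)$ has minimum degree at least $5$ by Condition~\ref{Con1_centre_set} of Lemma~\ref{lem_stars}, so its edges admit a $2$-colouring with at least two edges of each colour at every vertex. Taking $F'$ to be a spanning forest of the colour-$2$ subgraph with the same components, every vertex of $V_0$ has degree at least $2$ in that subgraph, so its component has at least $3$ vertices (Condition~\ref{Con_F_components}), while its at least two colour-$1$ edges automatically lie outside $F$ (Conditions~\ref{Con_2_nonC} and~\ref{Con_2_C}). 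If you want to keep a greedy flavour you must replace your saturation claim with something of this strength; as written the step is a genuine gap, not merely omitted routine detail.
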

\begin{proof}
First, applying Lemma \ref{lem_stars} to the graph $G$ gives us a star forest $F_S$ and a set $V_1$ consisting of some of the centres of $F_S$ such that the conclusion of Lemma \ref{lem_stars} applies. Next, we shall select the forest $F$. We define vertex sets $V_0$ and $V_2$ by $V_0 = V\setminus V(F_S)$ and $V_2=V(F_S)\setminus V_1$; so $(V_0, V_1, V_2)$ is a partition of $V$.

Let $G_1$ be the graph on vertex set $V_0\cup V_1$, and edge set $E_G(V_0)\cup E_G(V_0, V_1)$. Note that by Condition \ref{Con1_centre_set} of Lemma \ref{lem_stars}, $G_1$ has minimum degree at least $5$. So applying Lemma \ref{lem_all_colours} to the graph $G_1$ gives us a colouring $f:E_G(V_0)\cup E_G(V_0,V_1)\mapsto \{1,2\}$ with each colour appearing twice at every vertex. Now, let $G_2$ be the graph on vertex set $V_0\cup V_1$ with edge set $\{e\in E_G(V_0)\cup E_G(V_0,V_1): f(e)=2\}$. We define $F'$ to be any spanning forest for the graph $G_2$ such that the components of $F'$ are the same as the components of $G_2$, and $F$ to be the forest with vertex set $V$ and edge set $E(F')\cup E(F_S)$.

We claim that all the conditions of the lemma except the last now hold. For Condition \ref{Con_Struct}, we need to check that $F$ is a spanning forest of $G$. $F$ is the union of a spanning forest for $V_0\cup V_1$, together with a set of stars which span $V_2$ and each have at most one vertex in $V_1$. Hence $F$ is a spanning forest for $V$.

Conditions \ref{Con_V0_total_edges} and \ref{Con_star_edges} follow immediately from the corresponding conditions in Lemma \ref{lem_stars}. For Condition \ref{Con_F_components}, all vertices in $V_0$ have at least two edges $e$ in $E_G(V_0)\cup E_G(V_0,V_1)$ with $f(e)=2$, and so are in components of $G_2$, and hence of $F$, of size at least $3$. Condition \ref{Con_only_star} follows since $E(F)= E(F')\cup E(F_S)$, and $F'$ does not have any edges incident with $V_2$.

For vertices in $V_2$, Condition \ref{Con_2_nonC} is then immediate from Condition \ref{Con1_centre_set} in Lemma \ref{lem_stars}; in fact, all vertices $v\in V_2$ have at least five edges to $V_0\cup V_1$, and at most one of these edges is in $F$. Condition \ref{Con_2_nonC} also holds for vertices in $V_0$, since every vertex $v\in V_0$ has $2$ edges $e$ to $V_0\cup V_1$ with $f(e)=1$, and these edges cannot be in $F$. Similarly, Condition \ref{Con_2_C} holds because every vertex $v\in V_1$ has $2$ edges $e$ to $V_0$ with $f(e)=1$.

It remains only to choose the partition $(U_1, U_2)$ of $V_0$ so as to satisfy Condition \ref{Con_V0_part}. Let $G'$ be the graph on vertex set $V_0$, with edge set $E_G(V_0)\setminus F$. Now, $E_G(V_0)$ contains at least $r\ge m(n,r_1,r_2)+n$ edges, and $F$ has fewer than $n$ edges overall, so $|E(G')|=|E_G(V_0)\setminus F|\ge m(n,r_1,r_2)$. Since $G'$ has at most $n$ vertices, this guarantees a partion of $V(G')$ into sets $U_1$ and $U_2$ so that $|E_{G'}(U_1)|\ge r_1$ and $E_{G'}(U_2)\ge r_2$. This partition of $V_0$ satisfies the final condition of the lemma, completing the proof.
\end{proof}

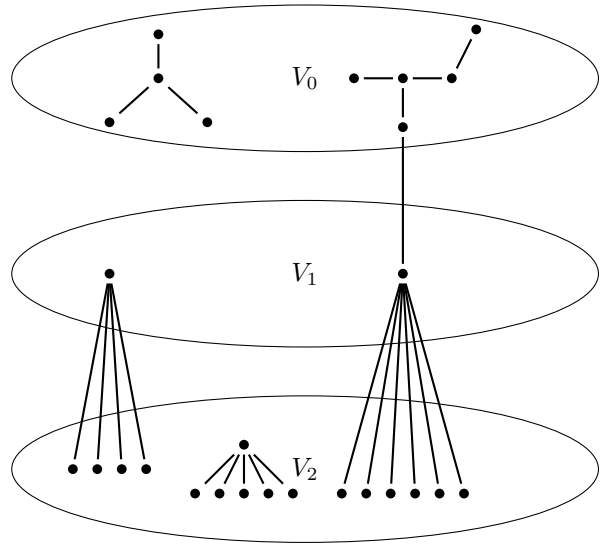
\begin{figure}[ht]
\centering{
\begin{tikzpicture}[scale=0.65,thick]
  \begin{pgfonlayer}{nodelayer}
   \node (a) at (4,9){\(V_0\)};
   \node (b) at (4,5){\(V_1\)};
   \node (c) at (4,1){\(V_2\)};
   \node (a_0) at (0,8.1){};
   \node (a_1) at (1,9){};
   \node (a_2) at (2,8.1){};
   \node (a_3) at (1,9.9){};
   \node (c_0) at (0,5){};
   \node (c_1) at (2.75,1.5){};
   \node (c_2) at (6,5){};
   \foreach \x in {0,...,3}{\node (l0_\x) at ($(-0.75,1)+(0.5*\x,0)$){};}
   \foreach \x in {0,...,4}{\node (l1_\x) at ($(1.75,0.5)+(0.5*\x,0)$){};}
   \foreach \x in {0,...,5}{\node (l2_\x) at ($(4.75,0.5)+(0.5*\x,0)$){};}
\node (d_0) at (6,8){};
\node (d_1) at (6,9){};
\node (d_2) at (5,9){};
\node (d_3) at (7,9){};
\node (d_4) at (7.5,10){};

   \begin{scope}
   \draw (a) ellipse (6cm and 1.5cm);
   \draw (b) ellipse (6cm and 1.5cm);
   \draw (c) ellipse (6cm and 1.5cm);
   \end{scope}
\end{pgfonlayer}

\foreach \x in {0,...,2}{\fill (c_\x) circle (0.1);}
\foreach \x in {0,...,3}{\fill (a_\x) circle (0.1);}
\foreach \x in {0,...,3}{\fill (l0_\x) circle (0.1);}
\foreach \x in {0,...,4}{\fill (l1_\x) circle (0.1);}
\foreach \x in {0,...,5}{\fill (l2_\x) circle (0.1);}
\foreach \x in {0,...,4}{\fill (d_\x) circle (0.1);}

\draw (a_0) to (a_1);
\draw (a_1) to (a_2);
\draw (a_1) to (a_3);
\foreach \x in {0,...,3}{\draw (l0_\x) to (c_0);}
\foreach \x in {0,...,4}{\draw (l1_\x) to (c_1);}
\foreach \x in {0,...,5}{\draw (l2_\x) to (c_2);}
\draw (c_2) to (d_0);
\draw (d_0) to (d_1);
\draw (d_1) to (d_2);
\draw (d_1) to (d_3);
\draw (d_3) to (d_4);

\end{tikzpicture}}
\caption{The structure of the forest F}
\label{fig_F_struct}
\end{figure}

\section{Edge colouring a graph with large minimum degree}\label{sec_form_to_anti}
The aim of this section is to demonstrate an algorithm which, given a graph $G$ with large minimum degree, uses the partition guaranteed by Lemma \ref{lem_partition} to label $G$ as Lemma \ref{lem_min_suffices} demands. In fact, we shall prove that Lemma \ref{lem_min_suffices} holds for $k_1$ and $k_2$ coprime odd integers, both at least $9$, with the constant $c=2k_1k_2+k_2$.

For the rest of this section, we fix:
\begin{itemize}
\item coprime odd integers $k_1$ and $k_2$, both at least $9$,
\item an integer $\delta$,
\item a graph $G=(V,E)$ on $n$ vertices with minimum degree at least $\delta$,
\item $r_1 = (k_1k_2+1)n$, $r_2 = (k_1+1)n$, and $r = \max(2(k_1k_2+k_1)n, m(n, r_1, r_1)+n)$,
\item a choice of $F_S$, $F$, $V_1$, $U_1$ and $U_2$ to satisfy the conclusions of Lemma \ref{lem_partition} for $\delta$, $n$, $r$, $r_1$, $r_2$ and $G$.
\item a label set $L$ of size $|E|$, containing $[1,(2k_1k_2+k_2)n]$,
\item and a function $g:V\mapsto \mathbb N$.
\end{itemize}
To prove Lemma \ref{lem_min_suffices} for $k_1$ and $k_2$ with the constant $c=2k_1k_2+k_2$, our task is to show that $G$ has a bijective $g$-antimagic colouring $f:E\mapsto L$ with no sum $s_{(f,g)}(v)$ divisible by $k_1k_2$, provided $\delta$ is larger than some constant. Before we begin to label $E$, as before we define $V_0 = V\setminus V(F_S)$, and $V_2 = V(F_S)\setminus V_1$. Also, we define $U_3$ to be $\{v \in V_0: E_G(\{v\}, V_0)\}\subseteq F\}$, and we write the stars of $F_S$ as $S_1, \dots, S_k$, with centres $c_1,\dots,c_k$. Further, we define a new partition on the edge set of $G$ as follows:
\begin{align*}
E_1&=E_G(V_2,V)\setminus F,\\
E_2&=E_G(V_1,V_0\cup V_1)\setminus F,\\
E_3&=E_G(V_0)\setminus F,\\
E_4&=F \setminus E(F_S),\\
E_5&= E(F_S).
\intertext{Note that the $E_i$ are disjoint, and their union is $E$. Now we define a partition of the labels of $L$ as follows:}
L_1&=[1,k_1k_2n] \setminus \{k_1k_2i : 1 \le i \le |F|\},\\
L_2&=[\max(L_1)+1,\max(L_1)+k_1k_2(2|V_1|+|U_3|)],\\
L_3&=[\max(L_2)+1, \max(L_2)+(|V_0\setminus U_3|-1)(k_1k_2+k_2)],\\
L_4&=\{k_1k_2i : 1 \le i \le |F|\},\\
L_T&= L \setminus (L_1\cup L_2 \cup L_3 \cup L_4).
\end{align*}
Note that the $L_i$ are all disjoint. Also, note that at most half of the vertices in $V_1\cup V_2$ are centres of stars in $F_S$, since every star has at least one vertex which is not the centre. Since $V_1$ is a subset of the centres of stars in $F_S$, we have $|V_1|\le |V_2|$, and so
\begin{align*}
 \max(L_1\cup L_2\cup L_3 \cup L_4)&= k_1k_2n + 2k_1k_2|V_1| + k_1k_2 |U_3|+\\
&+ (k_1k_2+k_2)(|V_0\setminus U_3|-1)\\
&< k_1k_2n + k_1k_2|V_2\cup V_1| + (k_1k_2+k_2)|V_0|\\
&\le (2k_1k_2+k_2)n.
\end{align*}
Since $[1,cn]\subseteq L$, we have $L_1\cup L_2 \cup L_3 \cup L_4\subseteq L$. Also, by Condition \ref{Con_V0_total_edges} of Lemma \ref{lem_partition} we have $|E_3| \ge (2k_1k_2+k_2)n-|F|\ge |L_1\cup L_2 \cup L_3|$. Hence we have
\begin{align*}
 |L_T| &= |L|- |L_1\cup L_2\cup L_3| - |L_4|\\
&= |E| - |L_1\cup L_2\cup L_3| - |E_4| - |E_5|\\
&= |E_1\cup E_2 \cup E_3| - |L_1\cup L_2\cup L_3|\\
&\ge |E_1\cup E_2|.
\end{align*}
The $L_i$ for $1 \le i \le 4$ are chosen carefully, and when we need some control over the label we use for an edge it will be these sets that are useful. The set $L_T$ we have no control over whatsoever; we shall use labels from $L_T$ when it is unimportant what label an edge receives.

\begin{table}[ht]
	\centering
		\begin{tabular}{c|c|l|l}
			Lemma & \pbox{100cm}{Edge\\set} & Label set & Aim \\
			\hline
			\ref{lem_E1_label} & $E_1$ & \pbox{100cm}{Some of $L_1$,\\ some of $L_T$} & \pbox{100cm}{Centres of stars in $V_2$ have sum equal\\to $1\pmod{k_1k_2n}$, other vertices\\in $V_2$ have sum ${m\pmod{k_1k_2n}}$}\\
			\hline
			\ref{lem_E2_label} & $E_2$ & \pbox{100cm}{Some of $L_2$,\\ some of $L_T$} & \pbox{100cm}{Vertices in $V_1$ have sum equal to \\$1\pmod{k_1k_2}$.\\ Vertices in $U_3$ have sum not equal\\to $0$ or ${1\pmod{k_1}}$, and not too\\many in any class modulo $k_1$.}\\
			\hline
			\ref{lem_E3_label} & $E_3$ & \pbox{100cm}{All of $L_3$,\\ rest of $L_1$, $L_2$, $L_T$} & \pbox{100cm}{Vertices in $U_1\setminus U_3$ have sum not\\equal to $0$ or ${1\pmod{k_1}}$, and not\\too many in any class modulo $k_1$,\\and similarly for $U_2\setminus U_3$ and $k_2$.}\\
			\hline
			\ref{lem_E4_label} & $E_4$ & Some of $L_4$ & Antimagic on $V_0$\\
			\hline
			\ref{lem_E5_label} & $E_5$ & Rest of $L_4$ & Antimagic on centres of stars\\
		\end{tabular}
	\caption{Strategy for labelling $G$}
	\label{tab:Strat}
\end{table}

Now we are ready to begin labelling $E$. We shall do this by labelling the edge sets $E_i$ in turn. We do this in a series of lemmas; each lemma takes the labelling guaranteed by the last, and extends it to label another set of edges. Table \ref{tab:Strat} summarises the edges labelled in each lemma, the labels used, and the aim for the partials sums of the labelling.

To give an overview of the labelling, we shall first go up the graph (as depicted in Figure \ref{fig_F_struct}), labelling all edges not in the spanning forest $F$. Here, we shall control the partial sums at vertices modulo $k_1$ and $k_2$; as we go up the graph, we shall be able to achieve less and less precise control over these sums. Then, we shall come back down the graph labelling $F$, essentially labelling greedily to avoid sums being equal. As we get nearer the end, we have fewer labels to choose from, but our more precise control over the partial sums so far and the special structure of $F_S$ compensates for this lack of choice.

First we shall label the set $E_1$, with labels from $L_1$ and $L_T$. Our aim here is that the vertices of $V_2$ receive specified partial sums modulo $k_1k_2n$; the sums at centres of stars will be congruent to $1$, while the sums at other vertices will be congruent to $0$ modulo $k_1$ and $1$ modulo $k_2$. We can achieve this without much difficulty, using two edges from a vertex $v\in V_2$ to $V_0\cup V_1$ to fix the sum at $v$.

We define the integer $m$ to be the unique element of $\{0,\dots,k_1k_2-1\}$ with $m\equiv 0 \pmod{k_1}$ and $m\equiv 1 \pmod{k_2}$.
\begin{lemma}\label{lem_E1_label}
There is an injective labelling $f_1:E_1\mapsto L_1\cup L_T$ such that
\begin{enumerate}
\item if $v$ is a vertex of $V_2$ which is the centre $c_i$ of one of the stars $S_i$, then $s_{(f_1,g)}(v)\equiv 1 \pmod{k_1k_2n}$,
\item if $v$ is a vertex of $V_2$ which is not the centre of one of the stars $S_i$, then $s_{(f_1,g)}(v)\equiv m \pmod{k_1k_2n}$.
\end{enumerate}
\end{lemma}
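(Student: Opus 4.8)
The goal is to label the edges of $E_1 = E_G(V_2,V)\setminus F$ using labels from $L_1\cup L_T$ so that each vertex $v\in V_2$ achieves a prescribed residue modulo $k_1k_2n$: residue $1$ if $v$ is a star centre, and residue $m$ otherwise. The crucial structural fact, guaranteed by Condition \ref{Con_2_nonC} of Lemma \ref{lem_partition}, is that every vertex $v\in V_2$ has at least two edges to $(V\setminus V(F_S))\cup V_1 = V_0\cup V_1$ which lie outside $F$; since $V_2$ has no internal edges to worry about here (the relevant edges run from $V_2$ down into $V_0\cup V_1$), these two ``free'' edges per vertex are exactly the handles I need to fix each partial sum independently.

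**The key steps.** First I would select, for each vertex $v\in V_2$, a pair of distinguished edges $e_v^1, e_v^2\in E_1$ incident with $v$, chosen from the at-least-two guaranteed edges to $V_0\cup V_1$; since these edges go to vertices outside $V_2$, distinct vertices of $V_2$ get genuinely disjoint pairs, so the assignments do not interfere. Next I would label all remaining edges of $E_1$ (those not distinguished) arbitrarily and injectively, drawing from $L_T$, which is permissible because $|L_T|\ge |E_1\cup E_2|\ge |E_1|$ as established just before the lemma. This produces a preliminary partial sum $s'(v)$ at each $v\in V_2$. Finally, for each $v\in V_2$ I want to choose labels for $e_v^1$ and $e_v^2$ so that $s'(v)$ plus these two labels hits the target residue $1$ (if $v$ is a centre) or $m$ (otherwise) modulo $k_1k_2n$. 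Using two edges rather than one gives the freedom to meet a single congruence condition modulo $k_1k_2n$ while keeping both labels distinct and drawn from the large reservoir $L_1$: concretely, I fix $e_v^1$ to a so-far-unused label in $L_1$, then solve for the residue of $e_v^2$ and pick any unused label in $L_1$ in that residue class. The set $L_1=[1,k_1k_2n]\setminus\{k_1k_2 i: 1\le i\le |F|\}$ contains roughly $n$ representatives in each class modulo $k_1k_2n$ (one full interval minus the multiples reserved for $L_4$), which I must check is enough to supply one fresh label per vertex of $V_2$ across all residue classes.

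**The main obstacle.** The delicate point is the bookkeeping on $L_1$: I must verify that after removing the multiples of $k_1k_2$ (reserved as $L_4$), the interval $[1,k_1k_2n]$ still has, in \emph{every} residue class modulo $k_1k_2n$, at least as many available labels as the number of vertices in $V_2$ requiring that class, so that the greedy assignment of the $e_v^2$ never stalls. Since $|V_2|\le n$ and the interval $[1,k_1k_2n]$ contains exactly one element of each residue class modulo $k_1k_2n$ in each of its $k_1k_2$ consecutive blocks of length $n$ — wait, more carefully, it contains exactly one representative of each class modulo $k_1k_2n$ per block, giving enough room — the count works out, but it requires care that the labels removed into $L_4$ (the multiples of $k_1k_2$) do not disproportionately deplete any single class modulo $k_1k_2n$. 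I would handle this by processing the vertices of $V_2$ one at a time, at each step freely choosing $e_v^1$ to absorb any residue imbalance so that the demand placed on the distinguished second edge is spread across classes, and then confirming the residual supply in $L_1$ dominates the demand. This is the step I expect to demand the most attention; the congruence-solving itself is routine since we are only matching a single condition modulo $k_1k_2n$ with two degrees of freedom.
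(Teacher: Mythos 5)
Your overall strategy coincides with the paper's: reserve two edges from each $v\in V_2$ to $V_0\cup V_1$ (Condition \ref{Con_2_nonC} of Lemma \ref{lem_partition}), label the rest of $E_1$ arbitrarily from $L_T$, and then use the two reserved edges to force each vertex's residue modulo $k_1k_2n$. However, there is a genuine error in your accounting of $L_1$. The set $L_1$ is contained in the interval $[1,k_1k_2n]$, which has length exactly $k_1k_2n$; hence $L_1$ contains \emph{at most one} representative of each residue class modulo $k_1k_2n$, not ``roughly $n$'' of them (it is the classes modulo $k_1k_2$ that have about $n$ representatives each). Consequently your concrete procedure --- fix $e_v^1$ at an arbitrary unused label of $L_1$, compute the residue thereby forced on $e_v^2$, and ``pick any unused label in $L_1$ in that residue class'' --- can stall: the forced class modulo $k_1k_2n$ contains at most one label of $L_1$, and that label may already have been used at an earlier vertex, or may be one of the multiples of $k_1k_2$ that were removed into $L_4$. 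Your closing remark about choosing $e_v^1$ ``to absorb any residue imbalance'' gestures at the right idea but is not an argument.

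The repair is short and is what the paper does: the pair of labels must be chosen jointly, by counting, rather than sequentially. When a vertex $v$ is processed, at most $2|V_2|-2$ labels of $L_1$ have been consumed, so at least $k_1k_2n-|F|-2|V_2|+2>(k_1k_2-3)n>k_1k_2n/2$ labels of $L_1$ remain, and these lie in more than half of the residue classes modulo $k_1k_2n$. Writing $S$ for the set of residues still available and $c$ for the target value of $l_1+l_2$ modulo $k_1k_2n$, the sets $S$ and $c-S$ each have size greater than $k_1k_2n/2$, so their intersection has size at least $2|S|-k_1k_2n>(k_1k_2-6)n$; any element $a$ of this intersection which is not a solution of $2a\equiv c\pmod{k_1k_2n}$ (of which there are at most two) yields a valid pair $(l_1,l_2)=(a,c-a)$ of distinct unused labels of $L_1$. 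With this counting step in place of your per-class supply estimate, the rest of your argument goes through as written.
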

\begin{proof}
 By Condition \ref{Con_2_nonC} of Lemma \ref{lem_partition}, we can choose a set $E'_1\subseteq E_1$ consisting of two edges from each $v\in V_2$ to $V_0\cup V_1$. We label all edges in $E_1\setminus E_1'$ with distinct and otherwise arbitrary labels from $L_T$. There are enough labels in $L_T$ to do this, since $|L_T| > |E_1|$. Let $f''_1:E_1\setminus E'_1 \mapsto L_T$ be the labelling this gives us. Now we step through the vertices of $V_2$ in any order, labelling the edges of $E'_1$ adjacent to each vertex as we come to it. When we come to a vertex $v$, suppose the edges at $v$ in $E'_1$ are $e_1$ and $e_2$. We label $e_1$ and $e_2$ with labels $l_1$ and $l_2$, obeying the following conditions:
\begin{enumerate}
\item $l_1$ and $l_2$ are not the same as any label already used in the labelling of $E'_1$,
\item $l_1$ and $l_2$ are labels of $L_1$,
\item 
$s_{(f''_1,g)}(v)+l_1+l_2\equiv 
\begin{cases}
1 \pmod{k_1k_2n}: \textrm{ if $v$ is $c_i$ for some $1\le i \le k$}\\
m \pmod{k_1k_2n}: \textrm{ otherwise}.
\end{cases}
$
\end{enumerate}
We claim this is always possible. Indeed, when we reach a vertex $v \in V_2$, at most $2|V_2|-2$ labels from $L_1$ have been used, so at least 
\[
 k_1k_2n-|F|-2|V_2|+2 > (k_1k_2-3)n > k_1k_2n/2
\]
labels remain, each of which is in a different congruency class modulo $k_1k_2n$. Hence there are two unused labels $l_1$ and $l_2$ in $L_1$ satisfying the conditions.
Let $f'_1:E'_1\mapsto L_1$ be the labelling this process gives. We define $f_1$ to be $f'_1$ on $E'_1$, and $f''_1$ on $E_1\setminus E'_1$. Then $f_1$ is a labelling of $E_1$ satisfying the conditions of the lemma.
\end{proof}

Next we take the labelling given by Lemma \ref{lem_E1_label}, and extend it to also label $E_2$. $E_2$ will be labelled using labels from $L_2$ and $L_T$. Our aim here is that vertices in $V_1$ recieve partial sums congruent to $1$ modulo $k_1k_2$, while the partial sums at vertices of $U_3$ are not congruent to $0$ or $1$ modulo $k_1$, and there are not too many in any other congruency class modulo $k_1$. This is achieved using Lemma \ref{lem_AB_colour}, which precisely guarantees us a labelling of this form.
\begin{lemma}\label{lem_E2_label}
 There is an injective labelling $f_2:E_1\cup E_2 \mapsto L_1\cup L_2\cup L_T$ such that
\begin{enumerate}
\item if $v$ is a vertex of $V_2$ which is the centre $c_i$ of one of the stars $S_i$, then $s_{(f_2,g)}(v)\equiv 1 \pmod{k_1k_2n}$,
\item if $v$ is a vertex of $V_2$ which is not the centre of one of the stars $S_i$, then $s_{(f_2,g)}(v)\equiv m \pmod{k_1k_2n}$,
\item if $v$ is a vertex of $V_1$, then $s_{(f_2,g)}(v)\equiv 1 \pmod{k_1k_2}$,
\item the induced colouring of the vertices of $U_3$ satisfies the following conditions:
\begin{enumerate}[a)]
 \item $n_{(f_2,g,U_3,k_1)}(0)=n_{(f_2,g,U_3,k_1)}(1)=0$,
 \item for each $2\le i\le k_1-1$, $n_{(f_2,g,U_3,k_1)}(i)\le |U_3|/(k_1-4)+2k_1-3$.
\end{enumerate}
\end{enumerate}
\end{lemma}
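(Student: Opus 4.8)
The plan is to build $f_2$ by taking the labelling $f_1$ of Lemma \ref{lem_E1_label} and extending it across $E_2$ through a single application of Lemma \ref{lem_AB_colour}. The first observation is that the edges of $E_2=E_G(V_1,V_0\cup V_1)\setminus F$ are never incident with $V_2$; hence extending $f_1$ over $E_2$ leaves every partial sum at a vertex of $V_2$ unchanged, so Conditions 1 and 2 of the lemma are inherited verbatim from Lemma \ref{lem_E1_label}. All the new work therefore concerns the sums at $V_1$ (which must become $\equiv 1\pmod{k_1k_2}$) and at $U_3$ (which must avoid $0,1$ and spread evenly modulo $k_1$), and this is precisely the shape of conclusion that Lemma \ref{lem_AB_colour} produces.

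Concretely, I would apply Lemma \ref{lem_AB_colour} to the graph $H=(V_0\cup V_1,E_2)$ with the partition $A=V_1$, $B=V_0$, distinguished set $B'=U_3$, vertex weighting $g'(v)=s_{(f_1,g)}(v)$, and target function $t\equiv 1$ on $A$. For a labelling $f''$ of $E_2$ so produced we have $s_{(f_2,g)}(v)=g'(v)+\sum_{v\in e\in E_2}f''(e)$, so Conclusion 1 of Lemma \ref{lem_AB_colour} yields $s_{(f_2,g)}(v)\equiv 1\pmod{k_1k_2}$ for all $v\in V_1$ (Condition 3), while Conclusions 2 and 3 give exactly Conditions 4a) and 4b) on $U_3$. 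Note that $E_2$ may contain edges internal to $V_1$, i.e. inside $A$; this is harmless, since Lemma \ref{lem_AB_colour} permits edges within $A$ and only constrains the edges from $A$ to $B$ and from $B'$ to $A$.

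It remains to verify the hypotheses, and this is where the geometry of the partition from Lemma \ref{lem_partition} enters. Every $v\in A=V_1$ has at least two edges to $V_0$ outside $F$ by Condition \ref{Con_2_C}, and these lie in $E_2$, so each vertex of $A$ has the required two edges to $B$. For $B'=U_3$: by the definition of $U_3$ every edge from such a vertex into $V_0$ lies in $F$, whereas Condition \ref{Con_2_nonC} supplies two edges from it to $V_0\cup V_1$ outside $F$; these must therefore run into $V_1=A$, furnishing the required edge from each vertex of $B'$ to $A$. This deduction --- that the non-$F$ edges guaranteed at a $U_3$ vertex are \emph{forced} to land in $V_1$ --- is the one genuinely non-routine step and the main thing to get right.

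For the label set I would feed Lemma \ref{lem_AB_colour} the interval $L_2$ together with enough so-far-unused labels of $L_T$. Since $L_2$ has length $k_1k_2(2|V_1|+|U_3|)$, a multiple of $k_1k_2$, it contains exactly $2|A|+|B'|=2|V_1|+|U_3|$ representatives of each class modulo $k_1k_2$, meeting the congruency requirement on its own. Because $f_1$ consumed only $|E_1|-2|V_2|$ labels of $L_T$ and $|L_T|\ge|E_1\cup E_2|$, at least $|E_2|+2|V_2|$ labels of $L_T$ remain free, so $L_2$ together with these has size at least $|E_2|+k_1k_2(2|V_1|+|U_3|)$, as the lemma demands. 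As $L_2$ and the unused part of $L_T$ are disjoint from the labels already spent by $f_1$, the combined map $f_2$ --- equal to $f_1$ on $E_1$ and to $f''$ on $E_2$ --- is injective into $L_1\cup L_2\cup L_T$, completing the construction.
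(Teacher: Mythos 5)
Your proposal is correct and follows essentially the same route as the paper: both extend $f_1$ over $E_2$ by a single application of Lemma \ref{lem_AB_colour} with $A=V_1$, $B=V_0$, $B'=U_3$, $t\equiv 1$, and label set $L_2$ together with the unused part of $L_T$, verifying the degree hypotheses from Conditions \ref{Con_2_C} and \ref{Con_2_nonC} of Lemma \ref{lem_partition} exactly as the paper does. Your explicit justification that the non-$F$ edges at a $U_3$ vertex are forced into $V_1$ is in fact slightly more careful than the paper's phrasing of the same point.
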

\begin{proof}
First, applying Lemma \ref{lem_E1_label} gives us an injective labelling $f_1:E_1\mapsto L_1\cup L_T$ satisfying the conclusions of that lemma. Let $L'_T$ be the set of labels in $L_T$ which are not in the image of $f_1$. Now, for a vertex $v\in V$, let $g'(v) = s_{(f_1,g)}(v)$. Define a graph $G'$ with vertex set $V_0\cup V_1$ and edge set $E_2$. Now, we apply Lemma \ref{lem_AB_colour}. In the statement of that lemma, we have a graph $G$, vertex sets $A$, $B$ and $B'$, integers $k_1$ and $k_2$, and a label set $L$; here we use the graph $G'$, the sets $V_1$, $V_0$ and $U_3$, the integers $k_1$ and $k_2$, and the label set $L'=L_2\cup L'_T$. To check the conditions of Lemma \ref{lem_AB_colour} hold, by Condition \ref{Con_2_C} of Lemma \ref{lem_partition} every vertex in $V_1$ has at least $2$ edges to $V_0$. Since vertices in $U_3$ have no edges to $V_0$, by Condition \ref{Con_2_nonC} of Lemma \ref{lem_partition} every vertex in $U_3$ has at least one edge to $V_1$. Since $|L'_T|\ge |L_T|-|E_1|\ge |E_2|$, $L'$ contains at least $|L_2|+|L'_T|\ge |E(G')|+4k_1k_2|A|+k_1|B'|$ labels. Finally, $L_2$ contains the required number of labels in each congruency class modulo $k_1$ and $k_1k_2$, and so Lemma \ref{lem_AB_colour} does indeed apply. We set the function $g:V(G')\mapsto \mathbb N$ in that lemma to be $g'$, and we set the function $t:V(G')\mapsto \mathbb N$ to be constantly $1$. Then by Lemma \ref{lem_AB_colour} there is an injective labelling $f'_2:E_2\mapsto L'$ such that
\begin{enumerate}
\item for each $v$ in $V_1$, $s_{(f'_2,g')}(v)\equiv 1 \pmod{k_1k_2}$,
\item $n_{(f'_2,g',U_3,k_1)}(0)=n_{(f'_2,g',U_3,k_1)}(1)=0$,
\item for each $2\le i\le k_1-1$, $n_{(f'_2,g',U_3,k_1)}(i)\le |U_3|/(k_1-4)+2k_1-3$.
\end{enumerate}
Now, we define the labelling $f_2:E_1\cup E_2 \mapsto L_1\cup L_2\cup L_T$ by setting $f_2=f_1$ on $E_1$ and $f_2=f'_2$ on $E_2$. Since $f'_2$ does not label any edge incident with $E_1$, the properties first two conditions of the lemma follow from the corresponding conditions for $f_1$. Also, since for all vertices $v$ we have $s_{(f_2,g)}(v)=s_{(f'_2,g')}(v)$, the other conditions in the lemma follow from the above conditions on $f'_2$.
\end{proof}

In the next lemma, we take the labelling given by Lemma \ref{lem_E2_label}, and extend it to also label $E_3$. This will be done with the remainder of the sets $L_1$, $L_2$ and $L_T$, and the whole of the label set $L_3$. We define $U'_1 = U_1\setminus U_3$ and $U'_2 = U_2\setminus U_3$. The aim is that the partial sums at vertices in $U'_1$ are not congruent to $0$ or $1$ modulo $k_1$, and there are not too many in any other congruency class modulo $k_1$, and similarly for $U'_2$ and $k_2$. To achieve this we shall use Lemma \ref{lem_k12colour}, which guarantees us a labelling to achieve precisely these conditions.

\begin{lemma}\label{lem_E3_label}
 There is a bijective labelling $f_3:E_1\cup E_2\cup E_3 \mapsto L_1\cup L_2\cup L_3\cup L_T$ such that
\begin{enumerate}
\item if $v$ is a vertex of $V_2$ which is the centre of a star in $F_S$, then $s_{(f_3,g)}(v)\equiv 1 \pmod{k_1k_2n}$,
\item if $v$ is a vertex of $V_2$ which is the centre of a star in $F_S$, then $s_{(f_3,g)}(v)\equiv m \pmod{k_1k_2n}$,
\item if $v$ is a vertex of $V_1$, then $s_{(f_3,g)}(v)\equiv 1 \pmod{k_1k_2}$,
\item the induced colouring of the vertices of $U_3$ satisfies the following conditions:
\begin{enumerate}[a)]
 \item $n_{(f_3,g,U_3,k_1)}(0)=n_{(f_3,g,U_3,k_1)}(1)=0$,
 \item for each $2\le i\le k_1-1$, $n_{(f_3,g,U_3,k_1)}(i)\le |U_3|/(k_1-4)+2k_1-3$,
\end{enumerate}
\item for $i=1$ and $2$, the induced colouring of the vertices of $U'_i$ satisfies the following conditions:
\begin{enumerate}[a)]
\item $n_{(f_3,g,U'_i,k_i)}(0)=n_{(f_3,g,U'_i,k_i)}(1)=0$,
\item for each $2\le j\le k_i-1$, $n_{(f_3,g,U'_i,k_i)}(j)\le |U'_i|/(k_i-3) + k_i + 2$.
\end{enumerate}
\end{enumerate}
\end{lemma}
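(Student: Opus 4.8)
The plan is to obtain $f_3$ by a single application of Lemma \ref{lem_k12colour} to the part of $G$ living inside $V_0\setminus U_3$, extending the labelling $f_2$ of Lemma \ref{lem_E2_label} and checking that the partial sums already controlled there are left undisturbed. First I would identify the graph to which Lemma \ref{lem_k12colour} is applied. Since $E_3=E_G(V_0)\setminus F$ and, by the definition of $U_3$, every vertex of $U_3$ has all of its edges into $V_0$ lying in $F$, no edge of $E_3$ is incident with $U_3$; hence every edge of $E_3$ joins two vertices of $V_0\setminus U_3=U'_1\cup U'_2$. Writing $G''=(U'_1\cup U'_2,E_3)$ and $N=|V_0\setminus U_3|\le n$, the same observation shows that a vertex of $V_0$ with no $E_3$-edge would have all its $V_0$-edges in $F$ and so lie in $U_3$; thus $G''$ has no isolated vertices, and I would use the partition $U'_1,U'_2$.

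To verify the edge hypotheses of Lemma \ref{lem_k12colour}, note that any edge of $E_G(U_1)\setminus E(F)$ joins two vertices of $U_1$ and lies outside $F$, so it is an edge of $E_3$ and therefore joins two vertices of $U'_1$; hence $E_G(U_1)\setminus E(F)\subseteq E_{G''}(U'_1)$, and Condition \ref{Con_V0_part} of Lemma \ref{lem_partition} gives $|E_{G''}(U'_1)|\ge r_1=(k_1k_2+1)n\ge (k_1k_2+1)N$. The bound $|E_{G''}(U'_2)|\ge (k_1+1)N$ follows identically from $r_2=(k_1+1)n$.

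The delicate step is the label set. The labels available for $E_3$ are exactly the members of $L_1\cup L_2\cup L_3\cup L_T$ not used by $f_2$; call this set $L''$, and observe $|L''|=|E_3|$ since $f_2$ labelled $E_1\cup E_2$ bijectively into $L_1\cup L_2\cup L_T$. I must show $L''$ contains at least $N-1$ labels in each class modulo $k_1k_2$ and at least $(k_2+1)(N-1)$ in each class modulo $k_1$. Because $f_2$ uses no label of $L_3$, all of $L_3\subseteq L''$; as $L_3$ is an interval of length $(N-1)(k_1k_2+k_2)$, each class modulo $k_1k_2$ contains at least $\lfloor (N-1)(1+1/k_1)\rfloor\ge N-1$ of its members, which settles the first requirement. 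Modulo $k_1$, however, $L_3$ supplies only at least $(N-1)k_2$ labels per class, short of $(k_2+1)(N-1)$ by $N-1$; this shortfall is exactly where the interval fails on its own, and I would make it up from the leftover of $L_1$. In $[1,k_1k_2n]$ each class modulo $k_1$ has $k_2n$ elements, of which $L_1$ removes only multiples of $k_1k_2$ (all lying in the class $0\bmod k_1$), and $f_1$ consumes at most $2|V_2|\le 2n$ further labels, so every class modulo $k_1$ retains at least $(k_2-3)n\ge N-1$ unused labels of $L_1$ (using $k_2\ge 9$, $N\le n$). Since $L_3$ and $L_1$ are disjoint, summing their contributions gives at least $(k_2+1)(N-1)$ labels per class modulo $k_1$, as required.

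With $G''$, the partition $U'_1,U'_2$, the label set $L''$, and the vertex function $g'(v)=s_{(f_2,g)}(v)$, Lemma \ref{lem_k12colour} yields a bijection $f'_3:E_3\to L''$ realising the desired spread of sums for $U'_1$ modulo $k_1$ and for $U'_2$ modulo $k_2$. Setting $f_3=f_2$ on $E_1\cup E_2$ and $f_3=f'_3$ on $E_3$ produces a bijection onto $L_1\cup L_2\cup L_3\cup L_T$. Finally I would confirm that nothing earlier is disturbed: every edge of $E_3$ lies inside $V_0\setminus U_3$, so $s_{(f_3,g)}(v)=s_{(f_2,g)}(v)$ for $v\in V_1\cup V_2\cup U_3$ and the first four conclusions are inherited verbatim from Lemma \ref{lem_E2_label}, while for $v\in U'_1\cup U'_2$ we have $s_{(f_3,g)}(v)=s_{(f'_3,g')}(v)$, so the fifth conclusion is precisely what Lemma \ref{lem_k12colour} guarantees. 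The main obstacle is the label-counting above, and specifically the fact that the interval $L_3$ alone does not contain enough representatives modulo $k_1$ (it falls short exactly when $k_1>k_2$), forcing one to invoke the surplus of $L_1$.
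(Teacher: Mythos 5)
Your proposal follows the paper's proof almost exactly: the paper likewise forms the graph $G'$ on vertex set $U'_1\cup U'_2$ with edge set $E_3$, takes $L'$ to be the labels of $L_1\cup L_2\cup L_3\cup L_T$ unused by $f_2$, and applies Lemma \ref{lem_k12colour} with the partition $U'_1,U'_2$ and $g'(v)=s_{(f_2,g)}(v)$, inheriting the first four conclusions because no edge of $E_3$ meets $V_1\cup V_2\cup U_3$. The one place where you diverge is the verification of the congruency-class hypothesis on the label set, and there your version is actually more careful than the paper's: the paper dismisses this with ``$L'$ contains at least as many labels in each congruency class modulo $k_1k_2$ and $k_1$ as Lemma \ref{lem_k12colour} requires, since $L_3$ does,'' but as you observe, the interval $L_3$ of length $(N-1)(k_1k_2+k_2)$ supplies only about $(N-1)k_2(1+1/k_1)$ labels per class modulo $k_1$, which falls short of the required $(k_2+1)(N-1)$ precisely when $k_1>k_2$ --- and the paper's final optimisation takes $(k_1,k_2)=(13,11)$. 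Your patch, drawing the missing $N-1$ labels per class from the at least $(k_2-3)n$ unused labels of $L_1$ in each class modulo $k_1$, is correct and closes this gap; the only quibble is that your justification of $|L''|=|E_3|$ should also invoke $|L_4|=|F|=|E_4|+|E_5|$, as the paper does.
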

\begin{proof}
First, applying Lemma \ref{lem_E2_label} gives us an injective labelling $f_2:E_1\cup E_2 \mapsto L_1\cup L_2\cup L_T$ satisfying the conclusions of that lemma. Let $L'$ be those labels in $L_1\cup L_2 \cup L_3 \cup L_T$ which are not in the image of $f_2$. Note that since $|E_4|+|E_5|=|L_4|$, we have $|L'|=|E_3|$, and also note $L_3\subseteq L'$. Let $g':V\mapsto \mathbb N$ be given by $g'(v)=s_{(f_2,g)}(v)$. We define a graph $G'$, having vertex set $U'_1\cup U'_2$, and edge set $E_3$. We wish to label $E_3$ with $L'$ using Lemma \ref{lem_k12colour}. In the statement of that lemma, we have a graph $G$, odd integers $k_1$ and $k_2$, a label set $L$, and vertex sets $V_1$ and $V_2$; here we use the graph $G'$, the integers $k_1$ and $k_2$, the label set $L'$ and the vertex sets $U'_1$ and $U'_2$. To show that Lemma \ref{lem_k12colour} does indeed apply, note that since $U'_1$ and $U'_2$ are disjoint from $U_3$, there are no isolated vertices in $G'$. Also, Condition \ref{Con_V0_part} of Lemma \ref{lem_partition} guarantees that $E_{G'}(U'_1)\ge (k_1k_2+1)n \ge (k_1k_2+1)|V(G')|$, and $|E_{G'}(U'_2)|\ge (k_1+1)n\ge (k_1+1)|V(G')|$. $L'$ contains at least as many labels in each congruency class modulo $k_1k_2$ and $k_1$ as Lemma \ref{lem_k12colour} requires, since $L_3$ does and $L_3\subseteq L'$. So Lemma \ref{lem_k12colour} applies. We set the function $g:V\mapsto \mathbb N$ in that lemma to be $g'$. Then by Lemma \ref{lem_k12colour} there is a bijective labelling $f'_3: E_3 \mapsto L'$ such that for $i =1$ and $2$ we have:
\begin{enumerate}
\item $n_{(f'_3,g',U'_i,k_i)}(0)=n_{(f'_3,g',U'_i,k_i)}(1)=0$,
\item for each $2\le j\le k_i-1$, $n_{(f'_3,g',U'_i,k_i)}(j)\le |U'_i|/(k_i-3) + k_i + 2$.
\end{enumerate}
Now, we define the labelling $f_3:E_1\cup E_2\cup E_3 \mapsto L_1\cup L_2\cup L_3\cup L_T$ to be equal to $f_2$ on $E_1\cup E_2$, and equal to $f'_3$ on $E_3$. Since $f'_3$ labels no edge incident with $V_1$, $V_2$ or $U_3$, the properties we need for $f_3$ on those sets are inherited from the corresponding properties of $f_2$. Also, for each $v \in U'_1 \cup U'_2$ we have $s_{(f'_3,g')}(v)=s_{(f_2,g)}(v)$. Hence the conditions we need on the sums in $U'_1$ and $U'_2$ follow from the above conditions on $f'_3$.
\end{proof}

At this stage, only the forest $F$ remains unlabelled, and the labels $E_4=\{k_1k_2,\dots,k_1k_2|F|\}$ remain to label $F$. In the next lemma, we extend the labelling from Lemma \ref{lem_E3_label} to label $E_4$ as well. This shall be done using some of the labels from $L_4$. The aim is that the vertices of $V_0$ receive distinct overall sums; to achieve this we shall use a greedy algorithm. This works because we have ensured that there are not too many vertices of $V_0$ with partial sums in any congruency class modulo $k_1k_2$, and all the labels in $E_4$ are divisible by $k_1k_2$, so each vertex in $V_0$ has a potential conflict with only fairly few other vertices in $V_0$. It is at this stage that we shall need $\delta$ to be large, to guarantee $E_5$ is large and so that there are always enough labels remaining in $L_4$ to pick an appropriate one to label an edge.
\begin{lemma}\label{lem_E4_label}
Suppose the following equation holds for $\delta$:
\begin{align}\label{eq_for_delta}
 &\delta\left(1/2-z(5,\delta)-\frac{2}{\min(k_1-4,k_2-3)}\right) \ge \\
\notag &\ge \max(2k_1k_2+k_2,m'(k_1k_2+1,k_2+1)+1)+6k_1+2k_2+2.
\end{align}
Then there is an injective labelling $f_4:E_1\cup E_2\cup E_3\cup E_4 \mapsto L$ such that:
\begin{enumerate}
\item the image of $f_4$ includes $L_1\cup L_2 \cup L_3 \cup L_T$,
\item if $v$ is a vertex of $V_2$ which is the centre $c_i$ of one of the stars $S_i$, then $s_{(f_4,g)}\equiv 1 \pmod{k_1k_2n}$,
\item if $v$ is a vertex of $V_2$ which is not the centre of one of the stars $S_i$, then $s_{(f_4,g)}\equiv m \pmod{k_1k_2n}$,
\item if $v$ is a vertex of $V_1$, then $s_{(f_4,g)}(v)\equiv 1 \pmod{k_1k_2}$,
\item if $v$ is a vertex of $V_0$, $s_{(f_4,g)}(v)$ is not congruent to $0$, $1$, or $m$ modulo $k_1k_2$,
\item for distinct vertices $v_1$ and $v_2\in V_0$, $s_{(f_4,g)}(v_1)\neq s_{(f_4,g)}(v_2)$.
\end{enumerate}
\end{lemma}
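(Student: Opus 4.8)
The plan is to label the forest edges $E_4 = F \setminus E(F_S)$ using only labels from $L_4$, every one of which is a multiple of $k_1 k_2$. The first observation is that Conditions 1--5 are essentially free. Since $f_3$ already uses exactly the labels $L_1 \cup L_2 \cup L_3 \cup L_T$ and we only add labels from $L_4$, the image of $f_4$ certainly contains $L_1 \cup L_2 \cup L_3 \cup L_T$, giving Condition 1. Adding multiples of $k_1 k_2$ leaves every vertex sum unchanged modulo $k_1 k_2$; by Condition \ref{Con_only_star} of Lemma \ref{lem_partition} no edge of $E_4$ is incident with $V_2$, so the sums at $V_2$ are literally unchanged and Conditions 2--3 survive, while the residues of $V_1$ vertices modulo $k_1 k_2$ are preserved, giving Condition 4. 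For Condition 5, the residues of each $V_0$ vertex modulo $k_1$ (for $U_3 \cup U'_1$) and modulo $k_2$ (for $U'_2$) are preserved from Lemma \ref{lem_E3_label}; since $m \equiv 0 \pmod{k_1}$ and $m \equiv 1 \pmod{k_2}$, the exclusions ``not $0$ or $1$ modulo $k_1$'' and ``not $0$ or $1$ modulo $k_2$'' already rule out $0$, $1$ and $m$ modulo $k_1 k_2$.

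The real content is Condition 6, distinctness of the sums on $V_0$, which I would establish by a greedy labelling of the forest $F' = (V_0 \cup V_1, E_4)$. I root each component of $F'$ and process its vertices deepest-first; when a vertex $v \in V_0$ is reached, all edges at $v$ except the one to its parent have already been labelled, so I am free to choose the label of the parent edge. The key point is that, because every $L_4$ label is divisible by $k_1 k_2$, we have $s_{(f_4,g)}(v) \equiv s_{(f_3,g)}(v) \pmod{k_1 k_2}$ for every $v$; hence $v$ can only clash with an already-finalised vertex $w \in V_0$ lying in the same residue class modulo $k_1 k_2$, and each such $w$ forbids at most one choice of the parent label. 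So the number of forbidden labels at any step is at most the number of previously used labels plus the number of $V_0$ vertices in $v$'s class modulo $k_1 k_2$.

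To see this never exhausts $L_4$, I bound the number of $V_0$ vertices in a single class modulo $k_1 k_2$. Such a class projects to one class modulo $k_1$ and one modulo $k_2$, so by the counts in Lemma \ref{lem_E3_label} it contains at most $|U_3|/(k_1-4) + |U'_1|/(k_1-3) + |U'_2|/(k_2-3) + O(k_1 + k_2) \le |V_0|/\min(k_1-4, k_2-3) + O(k_1 + k_2)$ vertices. Since $|L_4| = |F|$ and we must leave $|E(F_S)| = |E_5|$ labels for $E_5$, a valid label exists at each step provided $|E(F_S)|$ comfortably exceeds this conflict bound. Condition \ref{Con_star_edges} of Lemma \ref{lem_partition} gives $|E(F_S)| \ge n(1/2 - z(5,\delta) - 2/\delta) - 1 - r/\delta$, and the hypothesis \eqref{eq_for_delta} is precisely what converts this into the required inequality, so the greedy procedure runs to completion.

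The main obstacle is the treatment of the roots. In a component of $F'$ containing a vertex of $V_1$ I root there, so that every $V_0$ vertex is a non-root and is controlled by its parent edge. A component lying entirely in $V_0$, however, has no such vertex, and its root has no parent edge to adjust. Here I use Condition \ref{Con_F_components} of Lemma \ref{lem_partition}: such a component has at least three vertices, so I can root it at a vertex $\rho$ adjacent to a leaf $\lambda$, finalise every other vertex first, and then use the single remaining edge $\rho\lambda$ to fix $s(\rho)$ and $s(\lambda)$ at once. As the label of $\rho\lambda$ varies, both sums shift by the same multiple of $k_1 k_2$ while their difference stays fixed and can be kept nonzero, so one choice makes both avoid all earlier sums, at the cost of forbidding at most twice as many labels. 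This doubling at the roots is exactly why \eqref{eq_for_delta} carries the factor of $2$ in front of $1/\min(k_1-4, k_2-3)$, and checking that enough labels survive this worst case is the most delicate part of the argument.
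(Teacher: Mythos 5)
Your proposal is correct and follows essentially the same route as the paper: Conditions 1--5 are inherited from Lemma \ref{lem_E3_label} because every label in $L_4$ is a multiple of $k_1k_2$, and Condition 6 comes from a greedy labelling of $E_4$ whose conflict count is bounded by the residue-class counts of Lemma \ref{lem_E3_label} and absorbed by the $|E_5|$ spare labels that \eqref{eq_for_delta} guarantees. The only difference is organisational: the paper steps through the edges of $E_4$ in an arbitrary order, forbidding for each endpoint $v\in V_0$ of the current edge any label that would make $s(v)$ collide with some $v'\in V_0$ off the edge, and then deduces distinctness by considering the last labelled edge meeting exactly one of $v_1,v_2$ (which exists by Condition \ref{Con_F_components}); this sidesteps your special treatment of components contained in $V_0$ and the ``keep the difference nonzero'' step, while paying the same factor of $2$ because both endpoints of an edge may lie in $V_0$.
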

\begin{proof}
First, applying Lemma \ref{lem_E3_label} gives us a bijective labelling $f_3:E_1\cup E_2 \cup E_3 \mapsto L_1\cup L_2\cup L_3\cup L_T$ satisfying the conclusions of that lemma. Now we label $E_4$, using some of the labels from $L_4$. We do this by stepping through the edges of $E_4$ in any order, labelling each as we reach it. Let $E_4= \{e_1,\dots,e_r\}$. We define labellings $f^i$ for $0\le i \le r$, with $f^i$ being a labelling $f^i : E_1\cup E_2 \cup E_3\cup \{e_1,\dots, e_i\}\mapsto L$. For $i=0$, we define $f^0$ to be equal to $f_3$. For $i\ge 1$, define $f^i$ by setting $f^i=f^{i-1}$ on $E_1\cup E_2 \cup E_3 \cup \{e_1,\dots,e_{i-1}\}$, and letting $f^i(e_i)=l$ for any label $l$ satisfying the following conditions:
\begin{enumerate}
\item $l$ is in $L_4$, and not in the image of $f^{i-1}$,
\item \label{Con_no_clash_F} if $v$, $v'\in V_0$ with $v\in e$ and $v' \notin e$, $s_{(f^{i-1},g)}(v)+l \neq s_{(f^{i-1},g)}(v')$.
\end{enumerate}
We claim that such a label always exists. When we reach the edge $e=v_1v_2$, there are at least $|E_5|+1$ edges unlabelled, and correspondingly there are at least $|E_5|+1$ labels which obey the first condition. Of these, Condition \ref{Con_no_clash_F} rules out at most one label for each $v$, $v'\in V_0$ with $v\in e$, $v'\notin e$ and $s_{(f^{i-1},g)}(v)-s_{(f^{i-1},g)}(v')\in L_4$. An upper bound for the number of such vertices is the number of vertices $v'$ in $V_0\setminus e$ with $s_{(f_3,g)}(v')$ equal to $s_{(f_3,g)}(v_1)$ or $s_{(f_3,g)}(v_2)$ modulo $k_1k_2$, since all labels in $L_4$ are divisible by $k_1k_2$. From the conclusion of Lemma \ref{lem_E3_label}, the number of such vertices $v'$ is at most
\[
 2\left(\frac{|U_3|}{k_1-4} + 2k_1 - 3 + \frac{|U'_1|}{k_1-3} + k_1 + 2 + \frac{|U'_2|}{k_2-3} + k_2 + 2\right)-2.
\]
Indeed, the first term in the bracket represents the largest possible number of vertices $v\in U_3$ with $s_{(f_3,g)}(v) \equiv s_{(f_3,g)}(v_1) \pmod {k_1}$, the second the largest possible number of vertices $v\in U'_1$ with $s_{(f_3,g)}(v) \equiv s_{(f_3,g)}(v_1) \pmod {k_1}$, and the third the largest possible number of vertices $v\in U'_2$ with $s_{(f_3,g)}(v) \equiv s_{(f_3,g)}(v_1) \pmod {k_2}$. We may subtract $2$ because we need not consider the vertices $v_1$ and $v_2$. So there is a label that obeys the conditions so long as
\begin{equation}\label{eq_E5_large}
 |E_5|+1 \ge 2\left(\frac{|U_3|}{k_1-4} + \frac{|U'_1|}{k_1-3} + \frac{|U'_2|}{k_2-3} \right)+ 6k_1 + 2k_2.
\end{equation}
Assume for now that this equation holds. We define the labelling on $f_4: E_1\cup E_2 \cup E_3\cup E_4\mapsto L$ to be equal to $f^r$. We claim that $f_4$ satisfies the conditions of the lemma.

The first condition is satisfied, since the image of $f_4$ includes the image of $f_3$, which is $L_1\cup L_2 \cup L_3 \cup L_T$. The second and third conditions are guaranteed by the equivalent conditons for $f_3$, since $E_4$ has no edge incident with $V_1$. The fourth and fifth conditions hold for $f_3$, and so also for $f_4$, as $E_4$ is entirely labelled with labels divisible by $k_1k_2$. For the final condition, we claim the restrictions on the labelling of $E_5$ guarantee that $s_{(f_4,g)}(v_1)\neq s_{(f_4,g)}(v_2)$ for distinct vertices $v_1$ and $v_2$ in $V_0$. Indeed, let $e_j$ be the last edge incident with precisely one of $v_1$ and $v_2$ to be labelled; such an edge exists, by Condition \ref{Con_F_components} of Lemma \ref{lem_partition}. The conditions on the label given to $e_j$ guarantee that $s_{(f^j,g)}(v_1)\neq s_{(f^j,g)}(v_2)$, and hence $s_{(f_4,g)}(v_1)\neq s_{(f_4,g)}(v_2)$.

\noindent
To prove the lemma, it remains to check \eqref{eq_E5_large}. From Condition \ref{Con_star_edges} of Lemma \ref{lem_partition}, we have
\begin{equation}\label{eq_1}
 |E_5| = \left|\bigcup_{i=1}^k E(S_i)\right| \ge n(1/2-z(5,\delta)-2/\delta) - 1 - r/\delta.
\end{equation}
So to check \eqref{eq_E5_large} holds, it suffices to show that
\begin{equation}\label{eq_2}
 n(1/2-z(5,\delta)-2/\delta) - r/\delta \ge 2\left(\frac{|U_3|}{k_1-4} + \frac{|U'_1|}{k_1-3} + \frac{|U'_2|}{k_2-3} \right)+ 6k_1 + 2k_2.
\end{equation}
Now, $|U_3| + |U'_1| + |U'_2| = |V_0| \le n$, and so to establish \eqref{eq_2} it suffices to show that
\begin{equation}\label{eq_3}
 n\left(1/2 - z(5,\delta) - \frac{2}{\min(k_1-4,k_2-3)} - 2/\delta\right) - r/\delta \ge 6k_1 + 2k_2.
\end{equation}
Rearranging this equation, and multiplying by $\delta/n$, \eqref{eq_3} is equivalent to
\begin{equation}\label{eq_4}
 \delta/2-\delta z(5,\delta) - \frac{2\delta}{\min(k_1-4,k_2-3)} \ge r/n + (6k_1 + 2k_2)\delta/n+2.
\end{equation}
However, $G$ is a graph on $n$ vertices with minimum degree at least $\delta$, so $\delta/n<1$, and so for \eqref{eq_4} to hold it is enough that
\begin{equation}\label{eq_5}
 \delta\left(1/2-z(5,\delta)-\frac{2}{\min(k_1-4,k_2-3)}\right) \ge r/n + 6k_1+2k_2+2.
\end{equation}
Now, $r = \max((2k_1k_2+k_2)n,m(n,r_1,r_2)+n)$; but from Corollary \ref{Cor_good_partition} we have $m(n,r_1,r_2)=m(n,(k_1k_2+1)n,(k_1+1)n)\le m'(k_1k_2+1,k_1+1)n$. So for \eqref{eq_5} to hold it is enough that
\begin{align*}
 &\delta\left(1/2-z(5,\delta)-\frac{2}{\min(k_1-4,k_2-3)}\right) \ge \\
\notag &\ge \max(2k_1k_2+k_2,m'(k_1k_2+1,k_2+1)+1)+6k_1+2k_2+2.
\end{align*}
This is precisely the assumption of the lemma, and the proof is complete.
\end{proof}
Lemma \ref{lem_E4_label} leaves only $E_5$ unlabelled, and the unused labels are a subset of $L_4$. We wish label $E_5$ so that the sums at the centres of the stars in $F_S$ are all distinct. To achieve this, we use the following simple lemma:
\begin{lemma}\label{lem_star_label}
Let $S_1,\dots,\,S_k$ be vertex disjoint stars with centres $c_1,\dots,\,c_k$, let $L$ be a set of integers of size $\left|\bigcup_{i=1}^k E(S_i)\right|$, and let $g$ be a function $g:\bigcup_{i=1}^k V(S_i) \mapsto \mathbb N$. Then there exists a bijective edge labelling $f: \bigcup_{i=1}^k E(S_i) \mapsto L$ such that the sums $s_{(f,g)}(c_i)$ are distinct.
\end{lemma}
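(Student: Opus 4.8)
The plan is to exploit the special structure of a star: since \emph{every} edge of $S_i$ is incident with its centre $c_i$, the quantity $s_{(f,g)}(c_i)$ equals $g(c_i)$ plus the sum of \emph{all} labels placed on $E(S_i)$. Hence the only thing that matters for $c_i$ is the total label-weight assigned to $S_i$, and we may freely permute labels within a single star without affecting any centre sum. The task therefore reduces to partitioning $L$ into blocks of the prescribed sizes $|E(S_i)|$ so that the block-sums, shifted by the $g(c_i)$, are pairwise distinct.

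A naive greedy --- assigning labels to the stars one at a time while avoiding collisions --- runs into trouble at the very last star, which is forced to receive \emph{all} the remaining labels and so has no freedom left; this is the conceptual obstacle, and the following reduction is designed precisely to sidestep it. First I would reserve exactly one edge of each star as a \emph{free} edge (every star has at least one edge, so this is possible). The remaining $\left|\bigcup_i E(S_i)\right| - k$ edges I would label injectively but otherwise arbitrarily from $L$, using up all but $k$ of the labels. For each $i$, set $a_i = g(c_i) + \sum_{e} f(e)$, the sum taken over the already-labelled edges of $S_i$ (an empty sum for a single-edge star); these $a_i$ are now fixed integers.

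It remains to assign the $k$ leftover labels bijectively to the $k$ free edges, one per star, so that the $k$ values $a_i + (\text{label on the free edge of } S_i)$ are distinct. This is the key step: list the leftover labels as $m_1 < m_2 < \dots < m_k$ (a strict chain, since $L$ is a set of integers), reorder the stars so that $a_1 \le a_2 \le \dots \le a_k$, and assign $m_i$ to the free edge of the $i$-th star. Because $a_i \le a_{i+1}$ and $m_i < m_{i+1}$, the resulting centre sums $s_{(f,g)}(c_i) = a_i + m_i$ are strictly increasing in $i$, hence pairwise distinct. Combining the arbitrary labelling on the reserved edges with this matching produces a bijection $f : \bigcup_i E(S_i) \to L$ with all centre sums distinct, as required. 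The only points needing a word of care are that each star contributes exactly one free edge and that the label counts line up into a bijection, both of which are immediate since every star has at least one edge and $|L|$ equals the total number of edges.
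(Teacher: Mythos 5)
Your proof is correct, but it takes a genuinely different route from the paper's. The paper argues by induction on $k$: it sorts $L$ as $l_1<\dots<l_r$, computes for each star the minimum possible centre sum $n_i=g(c_i)+\sum_{j=1}^{|E(S_i)|}l_j$, gives the star minimising $n_i$ the smallest $|E(S_i)|$ labels (so its centre sum equals $n_i$ while every other centre sum strictly exceeds its own $n_i\ge n_k$), and recurses on the remaining stars and labels. Your argument is direct and arguably cleaner: you observe that only one degree of freedom per star is needed, reserve a single free edge in each star, label everything else arbitrarily, and then finish with a monotone matching --- sorting the residual sums $a_1\le\dots\le a_k$ against the strictly increasing leftover labels $m_1<\dots<m_k$ so that $a_i+m_i$ is strictly increasing. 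Both proofs rest on the same structural fact that every edge of a star meets its centre, so only the block-sum matters; the paper's version spends the entire label set's ordering to separate one star at a time, while yours spends only $k$ labels and avoids induction altogether. Neither yields any extra property needed elsewhere in the paper (Lemma~\ref{lem_E5_label} uses only the distinctness of the centre sums), so your variant would serve equally well as a drop-in replacement.
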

\begin{proof}
We prove this by induction on $k$; for $k=1$ the result is trivial. For $k\ge 2$, let $L=\{l_1,\dots,\,l_r\}$ with $l_1<\dots<l_r$. For $1 \le i \le k$ let $n_i = g(c_i)+\sum_{i=1}^{|E(S_i)|} l_i$. Without loss of generality, $n_k$ is the smallest of the $n_i$. We label $E(S_k)$ with $\{l_1,\dots,l_{|E(S_k)|}\}$ in any order. By the induction hypothesis, there is a labelling of $\bigcup_{i=1}^{k-1} E(S_i)$ with the rest of $L$ so that the sums at $c_1,\dots,\,c_{k-1}$ are distinct. Also, for this labelling we have $s_{(f,g)}(c_i)>n_i\ge n_k=s_{(f,g)}(c_k)$ for $i\neq k$, and so in fact the sums at the centres of all the stars are distinct.
\end{proof}

Using the labelling guaranteed by Lemma \ref{lem_E4_label} and applying Lemma \ref{lem_star_label} to label $E_5$, the edge set of the stars in $F_S$, we show that we can find an antimagic colouring of $G$:
\begin{lemma}\label{lem_E5_label}
 Suppose $\delta$ satisfies \eqref{eq_for_delta}. Then there is a bijective labelling $f_5: E \mapsto L$ so that $f_5$ is $g$-antimagic, and for all $v\in V$ we have $s_{(f_5,g)}(v)\not\equiv 0 \pmod{k_1k_2}$.
\end{lemma}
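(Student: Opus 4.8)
The plan is to label the final edge set $E_5=E(F_S)$ by a single application of Lemma \ref{lem_star_label}, taking as our starting point the labelling of $E_1\cup E_2\cup E_3\cup E_4$ guaranteed by Lemma \ref{lem_E4_label}. Since $\delta$ satisfies \eqref{eq_for_delta}, Lemma \ref{lem_E4_label} applies and yields an injective $f_4:E_1\cup E_2\cup E_3\cup E_4\mapsto L$ whose image contains $L_1\cup L_2\cup L_3\cup L_T$; the unused labels therefore form a subset $L'\subseteq L_4$, and since $|L|=|E|$ a direct count gives $|L'|=|E|-|E_1\cup E_2\cup E_3\cup E_4|=|E_5|$. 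I would then apply Lemma \ref{lem_star_label} to the stars $S_1,\dots,S_k$ with label set $L'$ and vertex function $g'(v)=s_{(f_4,g)}(v)$, obtaining a bijection $E_5\mapsto L'$ under which the centre sums $s_{(f_5,g)}(c_i)$ are pairwise distinct. Splicing this with $f_4$ produces the required bijection $f_5:E\mapsto L$, and it remains only to verify the two asserted properties.

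The key structural observation is that every label in $L'\subseteq L_4$ is divisible by $k_1k_2$, so labelling $E_5$ alters no vertex sum modulo $k_1k_2$. Combining this with the conclusions of Lemma \ref{lem_E4_label}, the residues modulo $k_1k_2$ of $s_{(f_5,g)}$ are as follows: it lies outside $\{0,1,m\}$ for $v\in V_0$; it equals $1$ for every centre (these are precisely $V_1$ together with the centres lying in $V_2$, all of whose $f_4$-sums are $\equiv 1$); and it equals $m$ for every leaf (the non-centre vertices of $V_2$). Since $m\equiv 0\pmod{k_1}$ and $m\equiv 1\pmod{k_2}$, we have $m\not\equiv 0\pmod{k_1k_2}$ and also $m\not\equiv 1\pmod{k_1k_2}$, so no sum is divisible by $k_1k_2$, and moreover the three residues $1$, $m$, and ``outside $\{0,1,m\}$'' are genuinely distinct. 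Consequently any two vertices lying in different groups automatically receive different sums.

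For the antimagic property I would argue distinctness within each of the three groups in turn. Within $V_0$ it is immediate, since $E_5$ touches no vertex of $V_0$ and Lemma \ref{lem_E4_label} already made these sums distinct; among the centres it is exactly the conclusion of Lemma \ref{lem_star_label}. The delicate case, and the main obstacle, is the leaves. Here I would exploit the finer congruence $s_{(f_4,g)}(v)\equiv m\pmod{k_1k_2n}$ supplied by Lemma \ref{lem_E1_label} and preserved through Lemmas \ref{lem_E2_label}--\ref{lem_E4_label}, none of which labels an edge at a leaf (a leaf's only forest edge is its star edge, by Condition \ref{Con_only_star} of Lemma \ref{lem_partition}, and its remaining edges lie in $E_1$). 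For distinct leaves $v,w$, writing $\ell_v,\ell_w$ for their star-edge labels we have $s_{(f_5,g)}(v)-s_{(f_5,g)}(w)=\bigl(s_{(f_4,g)}(v)-s_{(f_4,g)}(w)\bigr)+(\ell_v-\ell_w)$, where the first bracket is a multiple of $k_1k_2n$, while $\ell_v,\ell_w$ are distinct elements of $L_4\subseteq\{k_1k_2,\dots,k_1k_2|F|\}$, so $0<|\ell_v-\ell_w|\le k_1k_2(|F|-1)<k_1k_2n$. Thus the difference is a multiple of $k_1k_2n$ plus a nonzero integer of absolute value less than $k_1k_2n$, hence cannot vanish. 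This is precisely why the $V_2$-sums were fixed modulo $k_1k_2n$, rather than merely modulo $k_1k_2$, back in Lemma \ref{lem_E1_label}. Having verified distinctness within all three groups, $f_5$ is $g$-antimagic, completing the proof.
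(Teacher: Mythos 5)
Your proposal is correct and follows essentially the same route as the paper: apply Lemma \ref{lem_E4_label}, label $E_5$ via Lemma \ref{lem_star_label} with $g'=s_{(f_4,g)}$, use divisibility of $L_4$ by $k_1k_2$ to preserve the residue classes separating $V_0$, the centres, and the non-centre vertices of $V_2$, and then check distinctness within each group, with the leaf case resolved exactly as in the paper by the congruence modulo $k_1k_2n$ together with the bound $|F|<n$. No gaps.
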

\begin{proof}
 First we apply Lemma \ref{lem_E4_label} to $G$ --- let $f_4:E_1\cup E_2\cup E_3 \cup E_4 \mapsto L$ be a labelling satisfying the conclusions of that lemma, and let $L'$ be the labels not in the image of $f_4$; so $L'\subseteq L_4$. We have $|L'|=|E_5|$. For a vertex $v\in V$, let $g'(v)=s_{(f_4,g)}(v)$. Now, we apply Lemma \ref{lem_star_label} to the stars $S_1,\dots,S_k$ which make up $F_S$, and the label set $L'$, with the function $g':\bigcup_{i=1}^k V(S_i)\mapsto \mathbb N$ for the function $g$. This gives us a bijective labelling $f'_5:E_5\mapsto L'$ so that $s_{(f'_5,g')}(c_i)\neq s_{(f'_5,g')}(c_j)$ for $1\le i < j \le k$. Now, let $f_5$ be the labelling given by $f_4$ on $E_1\cup E_2\cup E_3 \cup E_4$ and $f'_5$ on $E_5$. We claim that $f_5$ is a $g$-antimagic labelling, with no sum $s_{(f_5,g)}(v)$ divisible by $k_1k_2$. Since $f'_5$ labels no edge incident with $V_0$, and all the labels used by $f'_5$ are divisible by $k_1k_2$, the following conditions hold from the properties of $f_4$:
\begin{enumerate}
\item if $v$ is a vertex of $V_2$ which is the centre $c_i$ of one of the stars $S_i$, then $s_{(f_5,g)}\equiv 1 \pmod{k_1k_2}$,
\item if $v$ is a vertex of $V_2$ which is not the centre of one of the stars $S_i$, then $s_{(f_5,g)}\equiv m \pmod{k_1k_2}$,
\item if $v$ is a vertex of $V_1$, then $s_{(f_5,g)}\equiv 1 \pmod{k_1k_2}$,
\item if $v$ is a vertex of $V_0$, $s_{(f_5,g)}$ is not congruent to $0$, $1$ or $m$ modulo $k_1k_2$,
\item for distinct vertices $v_1$ and $v_2\in V_0$, $s_{(f_5,g)}(v_1)\neq s_{(f_5,g)}(v_2)$.
\end{enumerate}
Since by Condition \ref{Con_Struct} of Lemma \ref{lem_partition} all the vertices of $V_1$ are the centres of stars, all centres $c$ of stars in $F_S$ have $s_{(f_5,g)}(c)\equiv 1 \pmod{k_1k_2}$, and all the other vertices $v$ in $V_2$ have $s_{(f,g)}(v)\equiv m \pmod{k_1k_2}$, whereas all vertices $v$ in $V_0$ have $s_{(f,g)}(v)\notin \{0,1,m\} \pmod {k_1k_2}$. So it suffices to check that none of these three sets contain two vertices with equal sums. For two vertices in $V_0$, this is the last condition above. For two vertices $c_i$ and $c_j$ which are the centres of stars $S_i$ and $S_j$, from our application of Lemma \ref{lem_star_label} we have $s_{(f_5,g)}(c_i)= s_{(f'_5,g')}(c_i)\neq s_{(f'_5,g')}(c_j)= s_{(f_5,g)}(c_j)$. For two vertices $v_1$ and $v_2$ in $V_2$ which are not the centres of stars, each has exactly one edge in a star, and so we have $s_{(f_5,g)}(v_1)=s_{(f_4,g)}(v_1)+l_1$, and $s_{(f_5,g)}(v_2)=s_{(f_4,g)}(v_2)+l_2$, for some $l_1\neq l_2$ in $L_4$, and hence in $[1,k_1k_2n]$. However, from Lemma \ref{lem_E4_label} we also have $s_{(f_4,g)}(v_1)\equiv s_{(f_4,g)}(v_2)\equiv m \pmod{k_1k_2n}$. Hence $s_{(f_5,g)}(v_1)\neq s_{(f_5,g)}(v_2)$.
\end{proof}
From the bound on $z(k,l)$ given by Theorem \ref{thm_total_dom}, it is easy to see that \eqref{eq_for_delta} holds for sufficiently large $\delta$ so long as $\frac{2}{\min(k_1-4,k_2-3)}<1/2$. This establishes Lemma \ref{lem_min_suffices} for $k_1$, $k_2\ge 9$, in which case we can take $c$ to be $2k_1k_2+k_2$ and $\delta$ to be the minimal integer satisfying \eqref{eq_for_delta}. In fact, it can be calculated that the best bound on $d$ is achieved when $(k_1, k_2) = (13,11)$, for which Lemma \ref{lem_min_suffices} holds with constants $c=2k_1k_2+k_2=297$ and $\delta = 1663$. Then from our proof of Theorem \ref{thm_main} from Lemma \ref{lem_min_suffices}, Theorem \ref{thm_main} holds with $d_0=4182$.

\section{Further Work}\label{sec_further}
In this section, we discuss possible directions for work on antimagic graphs. The main open question remains the conjecture of Hartsfield and Ringel, that all connected graphs other than $K_2$ are antimagic. However, Theorem \ref{thm_main} does not require $G$ to be connected, leading us to pose the following question:
\begin{question}
What is the least real number $d_0$ such that any graph with average degree at least $d_0$ with no isolated edges and at most one isolated vertex is antimagic?
\end{question}
Our proof gives an upper bound on $d_0$ of $4182$. While there may be room for decreasing this bound by proceeding more carefully, it seems unlikely that an approach similar to the one employed here will bring the bound below, say, $1000$. For a lower bound, it is easy to see that if $G$ has no isolated vertices and 
\begin{equation}\label{eq_not_anti}
|E|(|E|+1)<|V|(|V|+1)/2,
\end{equation}
then $G$ is not antimagic --- indeed, the total of the induced sums at all the vertices is not large enough for the vertex sums to be distinct positive integers. This gives $d_0\ge \sqrt{2}$. We conjecture that in fact any graph with no isolated edges and at most one isolated vertex not satisfying \eqref{eq_not_anti} is antimagic, and hence that $d_0 = \sqrt{2}$.

Another direction arises from the observation that our proof of Theorem \ref{thm_main} allows us to construct antimagic labellings of graphs $G$ with large average degree with many more label sets than just $[1,|E(G)|]$. In fact, we approximately need one label in each congruency class modulo $k_1k_2n$, and a further $n$ in each congruency class modulo $k_1k_2$. This leads us to ask whether we could use any label set. Explicitly, we call a graph $G=(V,E)$ \emph{label-antimagic} if for any set $L$ of positive integers with $|L|=|E|$ there is a bijective antimagic labelling $f:E\mapsto L$.
\begin{question}
Is there some constant $d_l$ such that all graphs with average degree at least $d_l$ with no isolated edges and at most one isolated vertex are label-antimagic?
\end{question}

\section{Acknowledgements}
I would like to thank Karen Gunderson, who introduced me to this problem and with whom I had several helpful conversations about antimagic colourings. I would also like to thank B\'ela Bollob\'as and both the anonymous referees, whose comments have made the proof clearer and stronger.

\end{document}